\newcommand*{\mysquare}{\rule[0.18em]{0.36em}{0.36em}}
\newcommand*{\mytriangle}{\raisebox{0.12em}{\resizebox{0.48em}{0.48em}{$\blacktriangleright$}}}
\newcommand*{\mybar}{\rule[0.32em]{0.62em}{0.08em}}
\newcommand*{\mydot}{\raisebox{0.14em}{\resizebox{0.44em}{!}{$\bullet$}}}
\setlist{%
  align=left,%
  labelindent=0mm, %
  leftmargin=!,%
  itemindent=0mm, %
  listparindent=\parindent,%
  parsep=0mm,%
  topsep=1mm,%
  itemsep=1mm%
}
\setlist[itemize,1]{label={\mysquare\ }, labelwidth=\widthof{\mysquare\ }}%
\setlist[itemize,2]{label={\mytriangle\ }, labelwidth=\widthof{\mytriangle\ }}%
\setlist[itemize,3]{label={\mybar\ }, labelwidth=\widthof{\mybar\ }}%
\setlist[itemize,4]{label={\mydot\ }, labelwidth=\widthof{\mydot\ }}%
\setlist[enumerate,1]{label=\arabic*), labelwidth=\widthof{9)}}%
\setlist[enumerate,2]{label=\arabic{enumi}.\arabic*), labelwidth=\widthof{9.9)}}%
\setlist[enumerate,3]{label=\arabic{enumi}.\arabic{enumii}.\arabic*), labelwidth=\widthof{9.9.9)}}%
\newcommand\myisodate{\number\year-\ifcase\month\or 01\or 02\or 03\or 04\or 05\or 06\or 07\or 08\or 09\or 10\or 11\or 12\fi-\ifcase\day\or 01\or 02\or 03\or 04\or 05\or 06\or 07\or 08\or 09\or 10\or 11\or 12\or 13\or 14\or 15\or 16\or 17\or 18\or 19\or 20\or 21\or 22\or 23\or 24\or 25\or 26\or 27\or 28\or 29\or 30\or 31\fi}%
\newcommand*{\abstractnoindent}{}%
\let\abstractnoindent\abstract
\renewcommand*{\abstract}{\let\quotation\quote\let\endquotation\endquote
  \abstractnoindent}
\renewcommand*{\big}[1]{{\vcenter{\hbox{\scalebox{1.30}{\ensuremath#1}}}}}%
\lstdefinestyle{input}{
  backgroundcolor=\color{semilightgray},%
  commentstyle=\itshape\color{chocolate},%
  keywordstyle=\color{blue},%
  stringstyle=\color{blue},%
  numbers=left,%
  numbersep=4.8pt,%
  numberstyle=\color{darkgray!80}\tiny%
}
\lstdefinestyle{output}{
  backgroundcolor=\color{lightgray}%
}
\lstdefinestyle{Lstyle}{
  language=[LaTeX]TeX,%
  texcs={},%
  otherkeywords={}%
}
\lstdefinestyle{Rstyle}{
  language=R,%
  keywords={if, else, repeat, while, function, for, in, next, break},%
  otherkeywords={}%
}
\renewcommand*{\cite}[2][]{\textcite[#1]{#2}}%
\newif\ifstarttheorem
\declaretheoremstyle[%
  spaceabove=0.5em,
  spacebelow=0.5em,
  headfont=\sffamily\bfseries\global\starttheoremtrue,
  notefont=\sffamily\bfseries,
  notebraces={(}{)},
  headpunct={},
  bodyfont=\normalfont,
  postheadspace=\newline%
]{myMainStyle}
\declaretheorem[style=myMainStyle, numberwithin=section]{definition}%
\declaretheorem[style=myMainStyle, sibling=definition]{proposition}
\declaretheorem[style=myMainStyle, sibling=definition]{lemma}
\declaretheorem[style=myMainStyle, sibling=definition]{theorem}
\declaretheorem[style=myMainStyle, sibling=definition]{corollary}
\declaretheorem[style=myMainStyle, sibling=definition]{remark}
\declaretheorem[style=myMainStyle, sibling=definition]{example}
\declaretheorem[style=myMainStyle, sibling=definition]{algorithm}
\preto\itemize{%
  \if@inlabel
    \ifstarttheorem
      \mbox{}\par\nobreak\vskip\glueexpr-\parskip-\baselineskip+0.25em\relax\hrule\@height\z@
    \fi%
  \fi%
  \global\starttheoremfalse%
 \def\tempa{proof}%
 \ifx\tempa\mycurrenvir
    \ifstarttheorem
      \mbox{}\par\nobreak\vskip\glueexpr-\parskip-\baselineskip+0.25em\relax\hrule\@height\z@
    \fi%
 \fi%
 \global\starttheoremfalse%
}
\preto\enditemize{\global\starttheoremfalse}
\preto\enumerate{%
  \if@inlabel
    \ifstarttheorem
      \mbox{}\par\nobreak\vskip\glueexpr-\parskip-\baselineskip+0.25em\relax\hrule\@height\z@
    \fi%
  \fi%
  \global\starttheoremfalse%
 \def\tempa{proof}%
 \ifx\tempa\mycurrenvir
    \ifstarttheorem
      \mbox{}\par\nobreak\vskip\glueexpr-\parskip-\baselineskip+0.25em\relax\hrule\@height\z@
    \fi%
 \fi%
 \global\starttheoremfalse%
}
\preto\endenumerate{\global\starttheoremfalse}
\newcommand{\tb}[2]{\substack{#1\\#2}}
\newcommand*{\psii}{{\psi^{-1}}}
\newcommand*{\psiis}[1]{{\psi_{#1}^{-1}}}
\newcommand*{\tpsi}{{\tilde{\psi}}}
\newcommand*{\tpsii}{{\tilde{\psi}^{-1}}}
\newcommand*{\tpsiis}[1]{{\tilde{\psi}_{#1}^{-1}}}
\newcommand*{\opsi}{{\mathring{\psi}}}
\newcommand*{\opsii}{{\mathring{\psi}^{-1}}}
\newcommand*{\IN}{\mathbb{N}}
\newcommand*{\IR}{\mathbb{R}}
\newcommand*{\Geo}{\operatorname{Geo}}
\newcommand*{\Sib}{\operatorname{Sib}}
\newcommand*{\Log}{\operatorname{Log}}
\newcommand*{\U}{\operatorname{U}}
\renewcommand*{\S}{\operatorname{S}}
\newcommand*{\tS}{\operatorname{\tilde{S}}}
\newcommand*{\I}{\mathbbm{1}}
\newcommand*{\rd}{\mathrm{d}}
\newcommand*{\D}{\operatorname{D}}
\newcommand*{\LS}{\mathcal{LS}}
\newcommand*{\LSi}{\LS^{-1}}
\renewcommand*{\P}{\mathbb{P}}
\newcommand*{\RV}{\operatorname{RV}}
\newcommand*{\R}{\textsf{R}}
\newcommand*{\eps}{\varepsilon}
\begin{document}
\thispagestyle{plain}
\begin{center}
  \sffamily
  {\bfseries\LARGE Right-truncated Archimedean and related copulas\par}
  \bigskip\smallskip
  {\Large Marius Hofert\footnote{Department of Statistics and Actuarial Science, University of
    Waterloo, 200 University Avenue West, Waterloo, ON, N2L
    3G1,
    \href{mailto:marius.hofert@uwaterloo.ca}{\nolinkurl{marius.hofert@uwaterloo.ca}}.
    Funding: The author would like to thank NSERC for financial support for this work
    through Discovery Grant RGPIN-5010-2015.}
    \par\bigskip
    \myisodate\par}
\end{center}
\par\smallskip
\begin{abstract}
  The copulas of random vectors with standard uniform univariate margins
  truncated from the right are considered and a general formula for such
  right-truncated conditional copulas is derived. This formula is analytical for
  copulas that can be inverted analytically as functions of each single
  argument. This is the case, for example, for Archimedean and related copulas.
  The resulting right-truncated Archimedean copulas are not only analytically
  tractable but can also be characterized as tilted Archimedean copulas. This
  finding allows one, for example, to more easily derive analytical properties
  such as the coefficients of tail dependence or sampling procedures of
  right-truncated Archimedean copulas. As another result, one can easily obtain
  a limiting Clayton copula for a general vector of truncation points converging
  to zero; this is an important property for (re)insurance and a fact already
  known in the special case of equal truncation points, but harder to prove without
  aforementioned characterization.  Furthermore, right-truncated Archimax
  copulas with logistic stable tail dependence functions are characterized as
  tilted outer power Archimedean copulas and an analytical form of
  right-truncated nested Archimedean copulas is also derived.
\end{abstract}
\minisec{Keywords}
Right truncation, conditional copulas, Archimedean copulas,
Archimax and nested Archimedean copulas, tilted and outer power transformations. %

\section{Introduction and motivation}
\cite{juriwuethrich2002} and \cite{charpentiersegers2007} studied the
practically relevant problem of determining the copula $C_t$ and its properties
of a bivariate random vector $(U_1,U_2)$ distributed according to some copula
$C$ given that, componentwise, $(U_1,U_2)\le (t,t)$ for some truncation point
$t\in(0,1]$. The copula $C_t$ is the copula of
$(U_1,U_2)\,|\,(U_1,U_2)\le (t,t)$, that is the copula of the conditional
distribution of $(U_1,U_2)$ given $(U_1,U_2)\le (t,t)$, a \emph{right-truncated}
$(U_1,U_2)$. %
In the first reference, $C_t$ is called ``extreme tail dependence copula
relative to $C$ at the level $t$'' %
as the limiting copula for $t\downarrow0$ is
of interest, and in the second reference $C_t$ is called ``lower tail dependence
copula relative to $C$ at level $t$''; note that $u$ instead of $t$ is used as single truncation point. %
Both references show that %
if $C$ is Archimedean, then $C_t$ is also Archimedean, and that if
$C$ is a Clayton copula, then $C_t$ equals $C$ -- a fact relevant for (re)insurance.
\cite{larssonneslehova2011} address the $d$-dimensional case,
but also focus on the limit for the truncation point converging to zero;
the corresponding copulas are referred to as ``limiting lower threshold copulas''. %

In comparison to the aforementioned publications, our contributions are as follows:
\begin{enumerate}
\item We consider a fixed $d$-dimensional vector
  $\bm{t}=(t_1,\dots,t_d)\in(0,1]^d$ as right truncation point. In particular,
  the thresholds do not have to be the same for each component %
  and we do not focus on the limiting case $\bm{t}\downarrow\bm{0}$ alone. %
\item We derive a formula for the copula $C_{\bm{t}}$ of
  $\bm{U}\,|\,\bm{U}\le\bm{t}$ in terms of the copula $C$ of
  $\bm{U}=(U_1,\dots,U_d)$; see Proposition~\ref{prop:rtrunc:C}. %
  The formula is analytical if $C$ is componentwise analytically invertible.
\item We consider the case where
  $\bm{U}$ follows an Archimedean copula and show that the family of copulas of
  $\bm{U}\,|\,\bm{U}\le\bm{t}$ is not only analytically available but actually
  also known, namely tilted Archimedean; see Theorem~\ref{thm:rtrunc:AC}.
\item We consider the case of
  $\bm{U}$ following outer power Archimedean copulas or Archimax copulas with
  logistic stable tail dependence function and show that the family of copulas
  of $\bm{U}\,|\,\bm{U}\le\bm{t}$ is tilted outer power Archimedean; see
  Sections~\ref{sec:rtrunc:opAC} and \ref{sec:rtrunc:AXC:log:stdf}.
\item We consider $\bm{U}$ following nested Archimedean copulas and
  derive the corresponding right-truncated copulas; see Theorem~\ref{thm:right:rtrunc:NAC}.
\end{enumerate}
Various examples are given and further properties %
discussed in the appendix.

The operation of right-truncation is important for (re)insurance as the copula
$C_{\bm{t}}$ allows one to study the dependence between the components of a
truncated loss random vector $\bm{L}\,|\,\bm{L}\le\bm{w}$ for any
$\bm{w}\in\IR^d$ such that $\P(\bm{L}\le\bm{w})>0$, where
$\bm{L} \sim F_{\bm{L}}$ with continuous margins $F_{L_1},\dots,F_{L_d}$ and
copula $C$.  To see this, note that the distribution function of
$\bm{L}\,|\,\bm{L}\le\bm{w}$ can be written as
$\P(\bm{L}\le \bm{x}\,|\,\bm{L}\le\bm{w})=\P(\bm{U}\le
\bm{u}\,|\,\bm{U}\le\bm{t})$ for
$\bm{U}=(F_{L_1}(L_1),\dots,F_{L_d}(L_d))\sim C$,
$\bm{u}=(F_{L_1}(x_1),\dots,F_{L_d}(x_d))$ and
$\bm{t}=(F_{L_1}(w_1),\dots,F_{L_d}(w_d))$, so in terms of the distribution
function of $\bm{X}=(\bm{U}\le \bm{u}\,|\,\bm{U}\le\bm{t})$ whose copula
$C_{\bm{t}}$ for a fixed truncation point $\bm{t}\in(0,1]^d$ is the main objective
in this work; occasionally, we also address the case $\bm{t}\downarrow\bm{0}$.
As the copula of a right-truncated distribution function, we simply refer to the
copula $C_{\bm{t}}$ of $\bm{U}\,|\,\bm{U}\le\bm{t}$ as \emph{right-truncated
  copula} in what follows.

Our findings also apply to the survival copula $\hat{C}$ of a left-truncated
$\bm{U}$, that is $\bm{U}\,|\,\bm{U}\ge\bm{t}$.
To see this let $\hat{\bm{u}}=\bm{1}-\bm{u}$, $\hat{\bm{t}}=\bm{1}-\bm{t}$ and
$\hat{\bm{U}}=\bm{1}-\bm{U}$, and note that, in distribution,
$(\bm{U} \ge \bm{u}\,|\,\bm{U} \ge \bm{t}) = (\bm{1}-\bm{U}\le
\bm{1}-\bm{u}\,|\,\bm{1}-\bm{U}\le\bm{1}-\bm{t}) = (\hat{\bm{U}} \le
\hat{\bm{u}} \,|\, \hat{\bm{U}} \le \hat{\bm{t}})$, so the survival copula of a
left-truncated $\bm{U}$ (that is the copula of the survival distribution of
$\bm{U}\sim C$ given $\bm{U}\ge \bm{t}$) equals the copula of $\hat{\bm{U}}$
(that is the survival copula $\hat{C}$ corresponding to $C$) right-truncated at
$\hat{\bm{t}}$. This fits in our framework if one considers the copulas we work
with (Archimedean, Archimax, etc.) as survival copulas $\hat{C}$.

\section{Right-truncated copulas}
We start with the general form of the distribution function $F_{\bm{t}}$ (and
its margins $F_{\bm{t},1},\dots,F_{\bm{t},d}$) of the right-truncated random
vector $\bm{U}\,|\,\bm{U}\le\bm{t}$ for $\bm{U}$ following a $d$-dimensional
copula $C$, that is the distribution function $F_{\bm{t}}$ of the conditional distribution of
$\bm{U}\sim C$ given $\bm{U}\le\bm{t}$.
\begin{lemma}[Right-truncated distribution function and its margins]\label{lem:rtrunc:df}
  Let $\bm{U}\sim C$ for a $d$-dimensional copula $C$ and let $\bm{t}\in(0,1]^d$ such that $C(\bm{t})>0$.
  Furthermore, let $\min\{\bm{x},\bm{t}\}=(\min\{x_1,t_1\},\dots,$ $\min\{x_d,t_d\})$.
  Then the distribution function $F_{\bm{t}}$ of $\bm{X}=(\bm{U}\,|\,\bm{U}\le\bm{t})$ is given
  by
  \begin{align*}
    F_{\bm{t}}(\bm{x})=\frac{C(\min\{\bm{x},\bm{t}\})}{C(\bm{t})}=\frac{C(\bm{x})}{C(\bm{t})},\quad\bm{x}\in[\bm{0},\bm{t}],
  \end{align*}
  with margins
  $F_{\bm{t},j}(x_j)=C(x_j;\bm{t}_{-j})/C(\bm{t})$,
  $x_j\in[0,t_j]$, $j=1,\dots,d$, where, for all $j=1,\dots,d$, $\bm{t}_{-j}=(t_1,\dots,t_{j-1},t_{j+1},\dots,t_d)$
  and
  \begin{align}
    C(x_j;\bm{t}_{-j})=C(t_1,\dots,t_{j-1},x_j,t_{j+1},\dots,t_d),\quad x_j\in[0,t_j].\label{comp:map}
  \end{align}
\end{lemma}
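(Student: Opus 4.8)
The plan is to unwind the definition of a conditional distribution function and then exploit the componentwise (lattice) structure of the events $\{\bm{U}\le\bm{x}\}$ and $\{\bm{U}\le\bm{t}\}$. First I would write, for arbitrary $\bm{x}$,
\[
  F_{\bm{t}}(\bm{x})=\P(\bm{U}\le\bm{x}\mid\bm{U}\le\bm{t})=\frac{\P(\bm{U}\le\bm{x},\ \bm{U}\le\bm{t})}{\P(\bm{U}\le\bm{t})},
\]
which is well defined precisely because the hypothesis $C(\bm{t})>0$ guarantees a positive denominator $\P(\bm{U}\le\bm{t})=C(\bm{t})$.

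Next I would observe that the two componentwise inequalities $U_j\le x_j$ and $U_j\le t_j$ hold simultaneously for every $j$ if and only if $U_j\le\min\{x_j,t_j\}$ for every $j$; hence the intersection event equals $\{\bm{U}\le\min\{\bm{x},\bm{t}\}\}$ and its probability is $C(\min\{\bm{x},\bm{t}\})$. Dividing by $C(\bm{t})$ yields the first equality. Restricting to $\bm{x}\in[\bm{0},\bm{t}]$ gives $\min\{x_j,t_j\}=x_j$ for each $j$, so $\min\{\bm{x},\bm{t}\}=\bm{x}$ and the second equality $F_{\bm{t}}(\bm{x})=C(\bm{x})/C(\bm{t})$ follows.

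For the margins I would use that $\bm{X}=(\bm{U}\mid\bm{U}\le\bm{t})$ is supported on $[\bm{0},\bm{t}]$, so that $X_k\le t_k$ almost surely for every $k$. Consequently the univariate margin $F_{\bm{t},j}(x_j)=\P(X_j\le x_j)$ is recovered from the joint distribution function by sending each remaining argument to its essential supremum, which here is $t_k$ rather than $+\infty$; that is, $F_{\bm{t},j}(x_j)=F_{\bm{t}}(t_1,\dots,t_{j-1},x_j,t_{j+1},\dots,t_d)$ for $x_j\in[0,t_j]$. Since this evaluation point lies in $[\bm{0},\bm{t}]$, the already-established formula $F_{\bm{t}}=C(\cdot)/C(\bm{t})$ applies and delivers $F_{\bm{t},j}(x_j)=C(x_j;\bm{t}_{-j})/C(\bm{t})$ with $C(x_j;\bm{t}_{-j})$ as in \eqref{comp:map}.

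The argument is essentially a bookkeeping exercise, and I expect the only point requiring genuine care to be this marginalization step: one must justify that, because the conditioned vector places all of its mass in $[\bm{0},\bm{t}]$, taking the limit $x_k\uparrow\infty$ in the $k$-th argument for $k\ne j$ is equivalent to evaluating at $x_k=t_k$, so that no probability mass is lost and the margin is obtained by substituting $t_k$ into the joint law. Everything else reduces to the elementary set-identity $\{\bm{U}\le\bm{x}\}\cap\{\bm{U}\le\bm{t}\}=\{\bm{U}\le\min\{\bm{x},\bm{t}\}\}$ together with the defining property that $C$ is the distribution function of $\bm{U}$.
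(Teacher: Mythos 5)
Your proposal is correct and follows essentially the same route as the paper: unwind the conditional probability, use the set identity $\{\bm{U}\le\bm{x}\}\cap\{\bm{U}\le\bm{t}\}=\{\bm{U}\le\min\{\bm{x},\bm{t}\}\}$, and obtain the margins by substituting $t_{\tilde{j}}$ for the arguments $\tilde{j}\neq j$. The only difference is that you spell out the justification for the marginalization step (that evaluating at $t_k$ rather than letting $x_k\uparrow\infty$ loses no mass because $\bm{X}$ is supported on $[\bm{0},\bm{t}]$), which the paper leaves implicit.
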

\begin{proof}
  The distribution function $F_{\bm{t}}$ of $\bm{U}\,|\,\bm{U}\le\bm{t}$ is given by
  \begin{align*}
    F_{\bm{t}}(\bm{x})&=\P(\bm{U}\le\bm{x}\,|\,\bm{U}\le \bm{t})=\frac{\P(\bm{U}\le\bm{x},\bm{U}\le \bm{t})}{\P(\bm{U}\le \bm{t})}=\frac{\P(\bm{U}\le\min\{\bm{x},\bm{t}\})}{\P(\bm{U}\le \bm{t})}
  \end{align*}
  which equals $\frac{\P(\bm{U}\le\bm{x})}{\P(\bm{U}\le\bm{t})}=\frac{C(\bm{x})}{C(\bm{t})}$ for $\bm{x}\in[\bm{0},\bm{t}]$.
  The $j$th margin is obtained by letting $x_{\tilde{j}}=t_{\tilde{j}}$ for all $\tilde{j}\neq j$.
\end{proof}
It is immediate from Lemma~\ref{lem:rtrunc:df} that if $C$ has density $c$, then $F_{\bm{t}}$
has density $f_{\bm{t}}(\bm{x})=c(\bm{x})/C(\bm{t})$, $\bm{x}\in(\bm{0},\bm{t})$.

The right-truncated distribution function $F_{\bm{t}}$ has a unique
copula %
which we denote by $C_{\bm{t}}$ and call \emph{right-truncated copula}.
\begin{definition}[Right-truncated copula]
  Let $\bm{U}\sim C$ for a $d$-dimensional copula $C$ and let $\bm{t}\in(0,1]^d$ such that $C(\bm{t})>0$.
  The copula
  $C_{\bm{t}}$ of the distribution function $F_{\bm{t}}$ of
  $\bm{U}\,|\,\bm{U}\le\bm{t}$ is called \emph{right-truncated copula at
    $\bm{t}$} or the \emph{copula $C$ right-truncated at
    $\bm{t}$}.
\end{definition}

A straightforward sampling procedure for $C_{\bm{t}}$ is the following.
\begin{algorithm}[Rejection sampling]\label{alg:crude:reject}
  \begin{enumerate}
  \item For $i=1,\dots,n$, do: Repeat sampling $\bm{U}\sim C$ until $\bm{U}\le\bm{t}$, then set $\bm{X}_i=\bm{U}$.
  \item Return
    $(F_{\bm{t},1}(X_{i1}),\dots,F_{\bm{t},d}(X_{id}))$, $i=1,\dots,n$. Alternatively, for
    sufficiently large $n$, return the pseudo-observations of
    $\bm{X}_1,\dots,\bm{X}_n$; see \cite{genestghoudirivest1995}.
  \end{enumerate}
\end{algorithm}

The following example shows pseudo-samples from right-truncated Marshall--Olkin copulas.
\begin{example}[Right-truncated bivariate Marshall--Olkin copulas]
  Figure~\ref{fig:rtrunc:MO:copula:samples} shows 5000
  pseudo-observations from bivariate right-truncated Marshall--Olkin copulas
  $C(u_1,u_2)=\min\{u_1^{1-\alpha_1}u_2,\ u_1u_2^{1-\alpha_2}\}$ with parameters
  $\alpha_1=0.2$, $\alpha_2=0.7$ and truncation points as indicated; for
  $\bm{t}=(1,1)$ in the top left plot, no truncation takes place and thus a
  sample from $C$ is shown.
  \begin{figure}[htbp]
    \centering
    \includegraphics[width=0.48\textwidth]{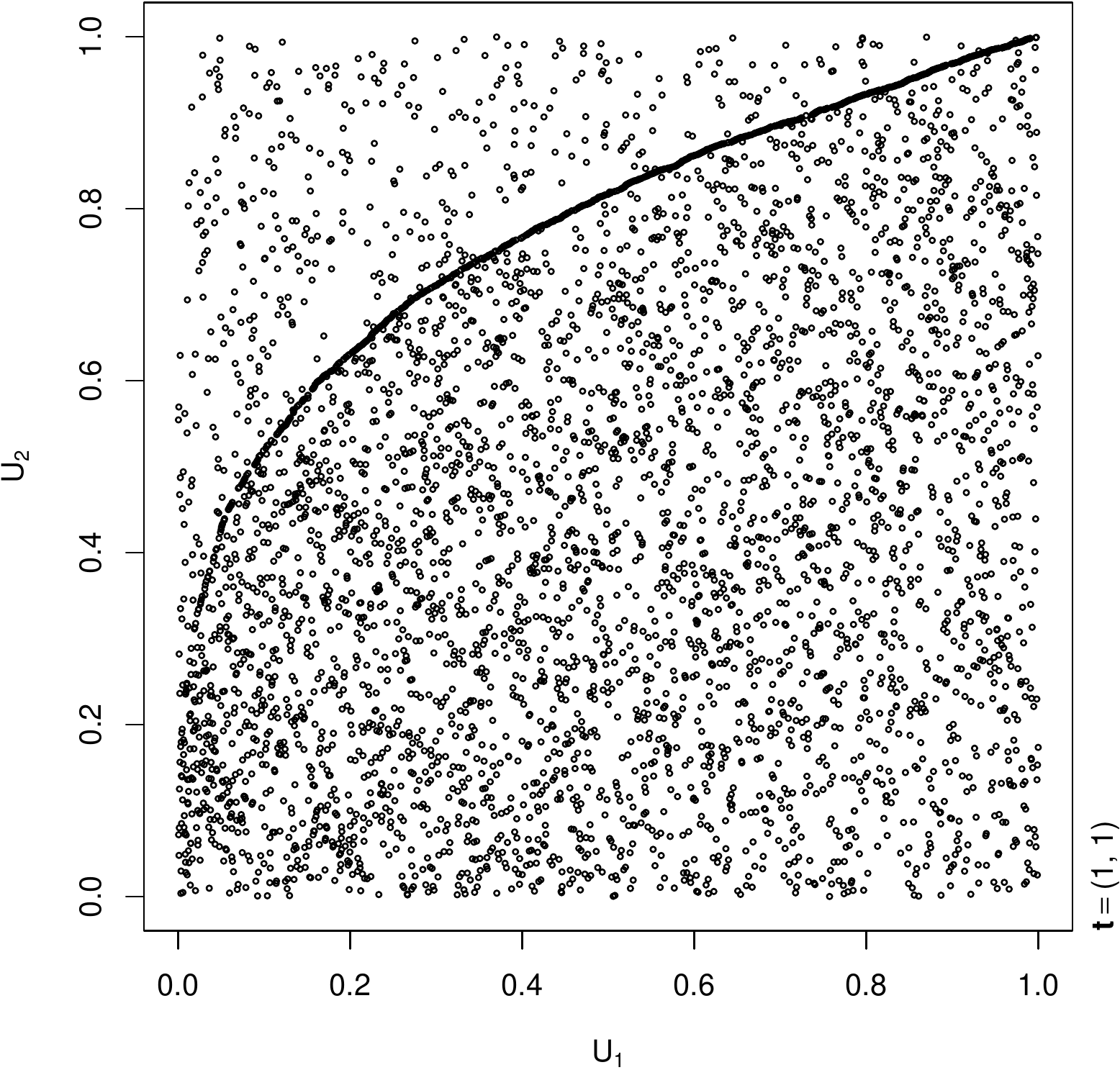}%
    \hfill
    \includegraphics[width=0.48\textwidth]{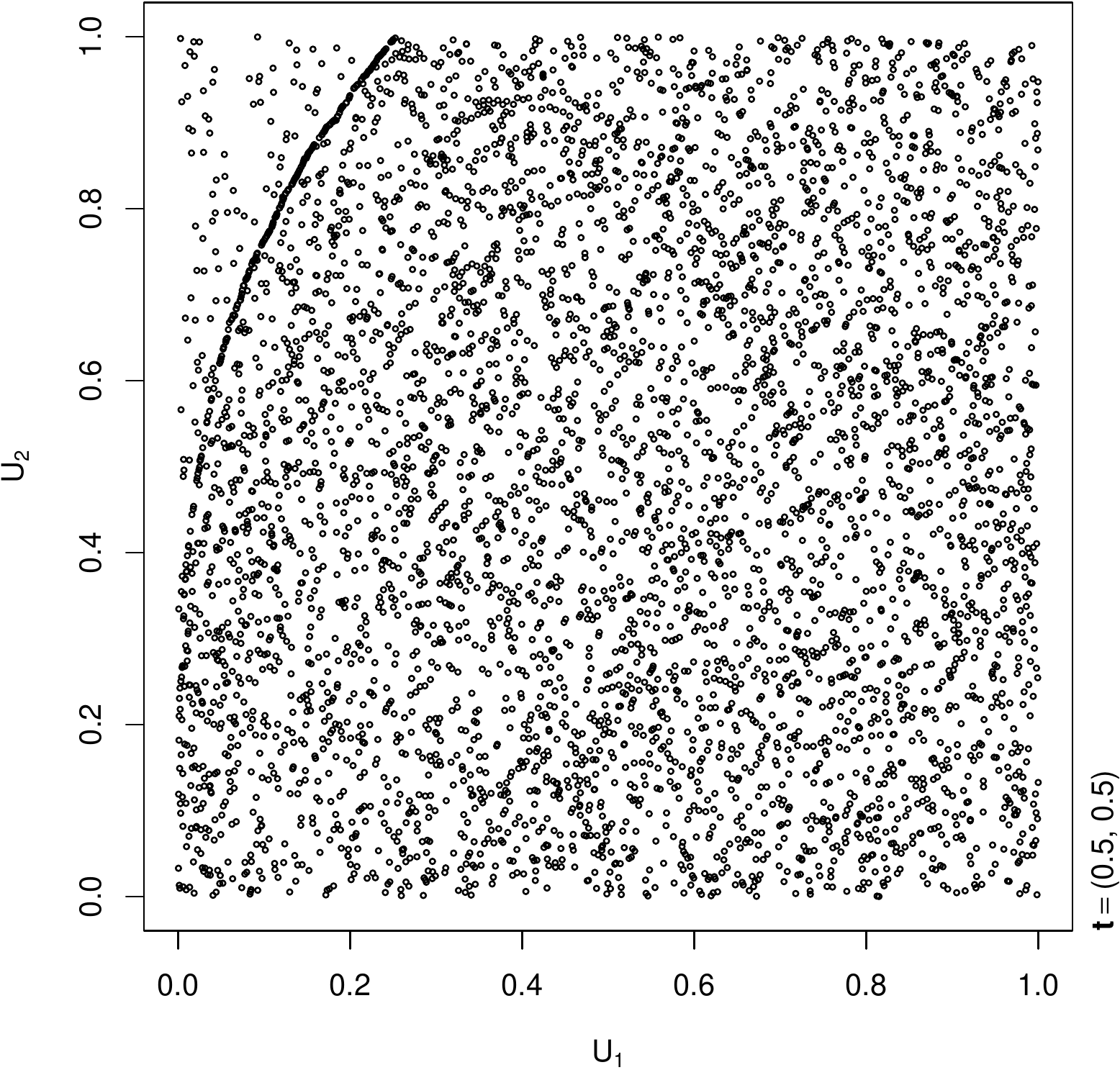}\\[2mm]
    \includegraphics[width=0.48\textwidth]{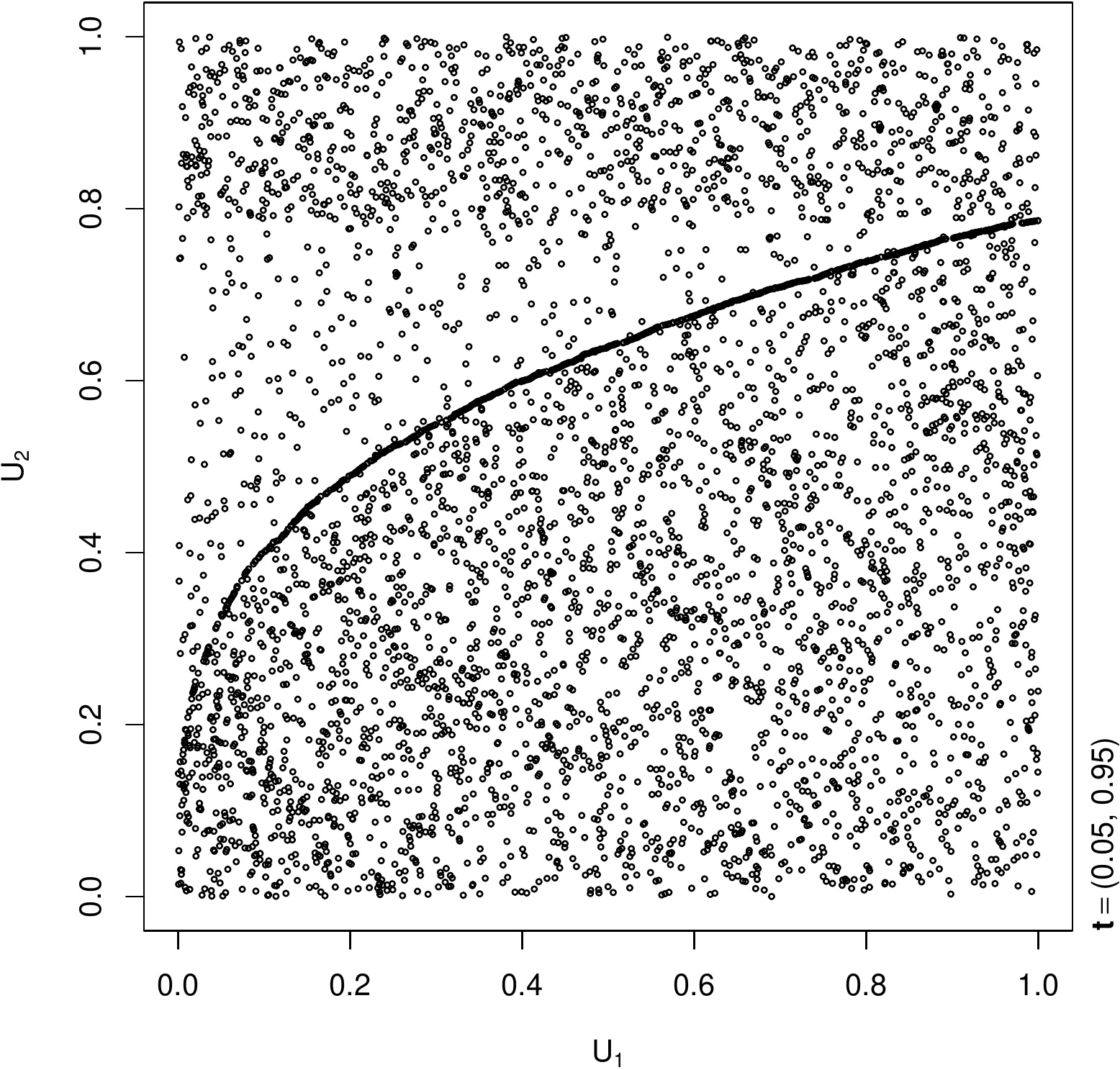}%
    \hfill
    \includegraphics[width=0.48\textwidth]{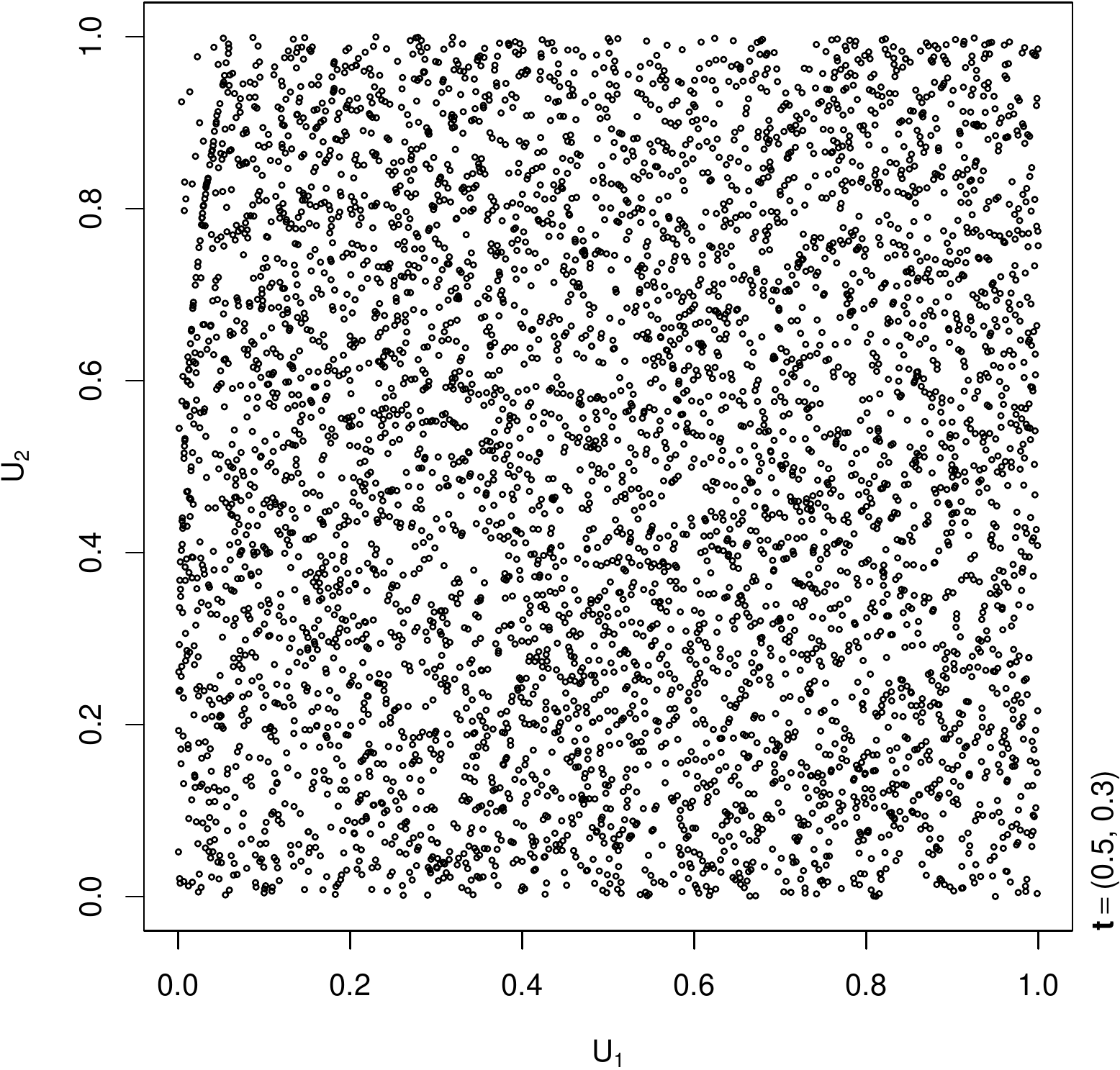}%
    \caption{$n=5000$ pseudo-observations from a Marshall--Olkin copula with
      parameters $\alpha_1=0.2$, $\alpha_2=0.7$ right-truncated at the indicated
      points $\bm{t}=(t_1,t_2)$.}
    \label{fig:rtrunc:MO:copula:samples}
  \end{figure}
  We see how right-truncation allows one to change the shape of Marshall--Olkin
  samples. In particular, right-truncation allows one to cut out a lower-left
  region of the copula samples (appropriately scaled to again be copula samples
  after truncation) and thus to shift the top right end point of the singular
  component to points other than $(1,1)$. Although we consider Archimedean and
  related copulas in what follows, this neither symmetric nor radially symmetric
  example nicely demonstrates the operation of right-truncation.
\end{example}

We now derive a general formula for the right-truncated copula $C_{\bm{t}}$ of a
$d$-dimensional copula $C$. To this end and for later, it will be convenient to
write $C(\{u_j\}_j)$ for $C(u_1,\dots,u_d)$ and to let
$y_j\mapsto C^{-1}[y_j;\bm{t}_{-j}]$ denote the generalized inverse of the
increasing (that is non-decreasing) function $x_j\mapsto C(x_j;\bm{t}_{-j})$;
see \cite{embrechtshofert2013c} for the notion of generalized inverses.
\begin{proposition}[Right-truncated copulas]\label{prop:rtrunc:C}
  Let $C$ be a $d$-dimensional copula and let $\bm{t}\in(0,1]^d$ such that $C(\bm{t})>0$.
  Then the right-truncated copula at $\bm{t}$ is given by
  \begin{align*}
    C_{\bm{t}}(\bm{u})=\frac{C(\{C^{-1}[C(\bm{t})u_j;\bm{t}_{-j}]\}_j)}{C(\bm{t})},\quad\bm{u}\in[0,1]^d.
  \end{align*}
\end{proposition}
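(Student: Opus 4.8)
The plan is to read the result off directly from Lemma~\ref{lem:rtrunc:df} via Sklar's Theorem. Lemma~\ref{lem:rtrunc:df} already gives the right-truncated distribution function $F_{\bm{t}}(\bm{x})=C(\bm{x})/C(\bm{t})$ and its margins $F_{\bm{t},j}(x_j)=C(x_j;\bm{t}_{-j})/C(\bm{t})$. The crucial preliminary observation is that, since $C$ is a copula and thus continuous, each section $x_j\mapsto C(x_j;\bm{t}_{-j})$ is continuous, so every margin $F_{\bm{t},j}$ is a continuous univariate distribution function on $[0,t_j]$. For a joint distribution function with continuous margins the copula is unique and can be recovered by the quantile construction $C_{\bm{t}}(\bm{u})=F_{\bm{t}}(\{F_{\bm{t},j}^{-1}(u_j)\}_j)$, where $F_{\bm{t},j}^{-1}$ denotes the generalized inverse.

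First I would compute each marginal generalized inverse explicitly. Writing $u_j=F_{\bm{t},j}(x_j)=C(x_j;\bm{t}_{-j})/C(\bm{t})$ and solving for $x_j$ is exactly the problem of inverting the increasing map $x_j\mapsto C(x_j;\bm{t}_{-j})$ at the level $C(\bm{t})u_j$, which by the notation introduced just before the statement gives $F_{\bm{t},j}^{-1}(u_j)=C^{-1}[C(\bm{t})u_j;\bm{t}_{-j}]$. Substituting these inverses into the quantile construction and using $F_{\bm{t}}(\bm{x})=C(\bm{x})/C(\bm{t})$ then yields
\begin{align*}
  C_{\bm{t}}(\bm{u})=F_{\bm{t}}\bigl(\{C^{-1}[C(\bm{t})u_j;\bm{t}_{-j}]\}_j\bigr)=\frac{C(\{C^{-1}[C(\bm{t})u_j;\bm{t}_{-j}]\}_j)}{C(\bm{t})},
\end{align*}
which is the claimed formula.

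The step I expect to require the most care is justifying that the quantile identity $F_{\bm{t}}(\{F_{\bm{t},j}^{-1}(u_j)\}_j)=C_{\bm{t}}(\bm{u})$ holds for \emph{every} $\bm{u}\in[0,1]^d$, including boundary points and any points at which a margin $F_{\bm{t},j}$ is flat---so that $C(x_j;\bm{t}_{-j})$ is not strictly increasing and the equation $C(x_j;\bm{t}_{-j})=C(\bm{t})u_j$ need not have an exact solution. Here I would invoke the standard properties of generalized inverses collected in \cite{embrechtshofert2013c}, in particular that $F_{\bm{t},j}(F_{\bm{t},j}^{-1}(u_j))=u_j$ whenever $u_j$ lies in the range of a continuous distribution function, to confirm that plugging the generalized inverses back into $F_{\bm{t}}$ reproduces the copula and not merely some value of the joint distribution. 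Once marginal continuity is established, the remainder is the classical fact that for continuous margins the copula equals the joint distribution function composed with the marginal quantile functions, so no work beyond the explicit inversion is needed.
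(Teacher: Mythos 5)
Your proposal is correct and follows essentially the same route as the paper: read off $F_{\bm{t}}$ and its margins from Lemma~\ref{lem:rtrunc:df}, identify the marginal quantile functions as $F_{\bm{t},j}^{-1}(u_j)=C^{-1}[C(\bm{t})u_j;\bm{t}_{-j}]$, and substitute into the Sklar/quantile construction $C_{\bm{t}}(\bm{u})=F_{\bm{t}}(F_{\bm{t},1}^{-1}(u_1),\dots,F_{\bm{t},d}^{-1}(u_d))$. The only difference is that you spell out the continuity of the margins (via continuity of copula sections) and the generalized-inverse identity $F_{\bm{t},j}(F_{\bm{t},j}^{-1}(u_j))=u_j$, details the paper leaves implicit under its appeal to Sklar's Theorem.
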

\begin{proof}
  By Lemma~\ref{lem:rtrunc:df}, the quantile function of $F_{\bm{t},j}$ is $F_{\bm{t},j}^{-1}(u_j)=C^{-1}[C(\bm{t})u_j;\bm{t}_{-j}]$.
  By Sklar's Theorem, the
  right-truncated copula can thus be obtained from $F_{\bm{t}}$ via
  \begin{align*}
    C_{\bm{t}}(\bm{u})&=F_{\bm{t}}(F_{\bm{t},1}^{-1}(u_1),\dots,F_{\bm{t},d}^{-1}(u_d))=\frac{C(F_{\bm{t},1}^{-1}(u_1),\dots,F_{\bm{t},d}^{-1}(u_d))}{C(\bm{t})}\\
    &=\frac{C(C^{-1}[C(\bm{t})u_1;\bm{t}_{-1}],\dots,C^{-1}[C(\bm{t})u_d;\bm{t}_{-d}])}{C(\bm{t})}=\frac{C(\{C^{-1}[C(\bm{t})u_j;\bm{t}_{-j}]\}_j)}{C(\bm{t})}.\qedhere
  \end{align*}
\end{proof}

\begin{remark}
  If $U\sim\U(0,1)$ and $t\in(0,1]$, then $X=U\,|\,U\le t$ has distribution
  function $F_t(x)=\P(U\le x\,|\,U\le t)=x/t$, $x\in[0,t]$, which equals the
  distribution function of the random variable $Y=tU$. As such, one might be tempted to believe that
  in the multivariate case for $\bm{U}\sim C$, the random vector
  $\bm{U}\,|\,\bm{U}\le\bm{t}$ is in distribution equal to (and thus can be
  sampled as) $\bm{t}\bm{U}=(t_1U_1,\dots,t_dU_d)$. However, note that
  $\bm{t}\bm{U}$ is a simple componentwise scaled version of $\bm{U}\sim C$ and thus,
  by the invariance principle, the copula of $\bm{t}\bm{U}$ is also $C$. %
\end{remark}

\begin{example}[Independence, independent blocks and comonotonicity copulas]\label{ex:indep:dep:blocks:max}
  If $C$ is \emph{componentwise analytically invertible}, that is componentwise
  invertible in analytic form, we see from Proposition~\ref{prop:rtrunc:C} that
  the right-truncated copula $C_{\bm{t}}$ is given analytically. The following are immediate examples.
  \begin{enumerate}
  \item For the independence copula $C(\bm{u})=\prod_{j=1}^d u_j$, we have
    $C(x_j;\bm{t}_{-j})=x_j\prod_{\tilde{j}\neq j}t_{\tilde{j}}$,
    $x_j\in[0,t_j]$, so that
    $C^{-1}[y_j;\bm{t}_{-j}]=y_j/\prod_{\tilde{j}\neq j}t_{\tilde{j}}$,
    $y_j\in[0,\ C(t_j;\bm{t}_{-j})=C(\bm{t})]$, and thus
    \begin{align*}
      C_{\bm{t}}(\bm{u})&=\frac{C(\{C^{-1}[C(\bm{t})u_j;\bm{t}_{-j}]\}_j)}{C(\bm{t})}=\frac{\prod_{j=1}^d\frac{C(\bm{t})u_j}{\prod_{\tilde{j}\neq j}t_{\tilde{j}}}}{C(\bm{t})}=C(\bm{t})^{d-1}\prod_{j=1}^d \frac{u_j}{\prod_{\tilde{j}\neq j}t_{\tilde{j}}}\\
                        &=C(\bm{t})^{d-1}\frac{\prod_{j=1}^d u_j}{\prod_{j=1}^d\prod_{\tilde{j}\neq j}t_{\tilde{j}}}=C(\bm{t})^{d-1}\frac{C(\bm{u})}{C(\bm{t})^{d-1}}=C(\bm{u}),
    \end{align*}
    which confirms the intuition that right-truncating independent random variables (the independence copula) leads to
    independent random variables (the independence copula).
  \item\label{ex:indep:dep:blocks:max:indep:blocks} For a hierarchical copula of the form
    $C(\bm{u})=\prod_{s=1}^SC_s(\bm{u}_s)$ (independent dependent blocks) with
    $\bm{u}=(\bm{u}_1,\dots,\bm{u}_S)$, copulas $C_s$, $s=1,\dots,S$, and a
    truncation point $\bm{t}=(\bm{t}_1,\dots,\bm{t}_S)$ we have
    $C(x_{sj};\bm{t}_{-sj})=C_s(x_{sj};\bm{t}_{s(-j)})\prod_{\tb{\tilde{s}=1}{\tilde{s}\neq s}}^S C_{\tilde{s}}(\bm{t}_{\tilde{s}})$, where $\bm{t}_{-sj}$
    denotes $\bm{t}$ without the $j$th component in block $s$
    and $\bm{t}_{s(-j)}$ denotes $\bm{t}_s$ without the $j$th component.
    Therefore $C^{-1}[y_{sj};\bm{t}_{-sj}]=C_s^{-1}\bigl(y_{sj}/\prod_{\tb{\tilde{s}=1}{\tilde{s}\neq s}}^S C_{\tilde{s}}(\bm{t}_{\tilde{s}}); \bm{t}_{s(-j)}\bigr)$ and thus
    \begin{align*}
      C_{\bm{t}}(\bm{u})&=\frac{C\bigl(\{C_s^{-1}(C(\bm{t})u_{sj}/\prod_{\tb{\tilde{s}=1}{\tilde{s}\neq s}}^S C_{\tilde{s}}(\bm{t}_{\tilde{s}}); \bm{t}_{s(-j)})\}_{s,j}\bigr)}{C(\bm{t})}\\
                        &=\frac{C(\{C_s^{-1}(C_s(\bm{t}_s)u_{sj}; \bm{t}_{s(-j)})\}_{s,j})}{C(\bm{t})}=\prod_{s=1}^S\frac{C_s(\{C_s^{-1}(C_s(\bm{t}_s)u_{sj}; \bm{t}_{s(-j)})\}_{s,j})}{C_s(\bm{t}_s)}\\
                        &=\prod_{s=1}^SC_{s,\bm{t}_s}(\bm{u}_s),
    \end{align*}
    that is the product of the copulas $C_1,\dots,C_S$ right-truncated at $\bm{t}_1,\dots,\bm{t}_S$, respectively.
    This confirms the intuition that right-truncating independent random vectors (their copulas)
    leads to independent right-truncated random vectors (their copulas).
  \item For the comonotonicity copula $C(\bm{u})=\min\{\bm{u}\}=\min\{u_1,\dots,u_d\}$, we have
    \begin{align*}
      C(x_j;\bm{t}_{-j})=\min\{t_1,\dots,t_{j-1},x_j,t_{j+1},\dots,x_d\}=
      \begin{cases}
        x_j,& x_j\le\min\{\bm{t}_{-j}\},\\
        \min\{\bm{t}_{-j}\},& \min\{\bm{t}_{-j}\}<x_j\le t_j,
      \end{cases}
    \end{align*}
    where the second case is void %
    if $t_j\le \min\{\bm{t}_{-j}\}$. Therefore, %
    $C^{-1}[y_j;\bm{t}_{-j}]=y_j$, $y_j\in[0,\ C(t_j;\bm{t}_{-j})=C(\bm{t})]$, and thus
    \begin{align*}
      C_{\bm{t}}(\bm{u})&=\frac{C(\{C^{-1}[C(\bm{t})u_j;\bm{t}_{-j}]\}_j)}{C(\bm{t})}=\frac{\min\{\{C(\bm{t})u_j\}_j\}}{C(\bm{t})}=\frac{C(\bm{t})\min\{\{u_j\}_j\}}{C(\bm{t})}=C(\bm{u}),
    \end{align*}
    which confirms the intuition that right-truncating comontone random
    variables (the comonotonicity copula) %
    leads to comontone random variables (the comonotonicity copula).
  \end{enumerate}
\end{example}

\begin{example}[Bivariate right-truncated Marshall--Olkin copulas]\label{ex:biv:MO}
  Another example of a componentwise analytically invertible copula is the
  bivariate Marshall--Olkin copula
  $C(u_1,u_2)=\min\{u_1^{1-\alpha_1}u_2,\ u_1u_2^{1-\alpha_2}\}$ for
  $\alpha_1,\alpha_2\in(0,1)$. %
  Note that $u_1^{1-\alpha_1}u_2\lesseqgtr u_1u_2^{1-\alpha_2}$ if and only if
  $u_2^{\alpha_2}\lesseqgtr u_1^{\alpha_1}$ if and only if
  $u_2\lesseqgtr u_1^{\alpha_1/\alpha_2}$ if and only if
  $u_1\gtreqless u_2^{\alpha_2/\alpha_1}$. Let $\alpha_{-j}=\alpha_1$ if $j=2$ and $\alpha_{-j}=\alpha_2$ if $j=1$.
  Then
  \begin{align*}
    C(x_j;t_{-j})=\begin{cases}%
      t_{-j}^{1-\alpha_{-j}}x_j,& 0\le x_j< t_{-j}^{\alpha_{-j}/\alpha_{j}},\\
      t_{-j}x_j^{1-\alpha_j},& t_{-j}^{\alpha_{-j}/\alpha_j}\le x_j\le 1,
    \end{cases}
  \end{align*}
  which is continuous %
  and strictly increasing, and equal to $t_{-j}$ for $x_j=1$.
  Therefore,
  \begin{align*}
    C^{-1}[y_j;t_{-j}]=\begin{cases}%
      y_j/t_{-j}^{1-\alpha_{-j}},& 0\le y_j\le t_{-j}^{1-\alpha_{-j}+\alpha_{-j}/\alpha_j},\\
      (y_j/t_{-j})^{\frac{1}{1-\alpha_j}},& t_{-j}^{1-\alpha_{-j}+\alpha_{-j}/\alpha_j}< y_j\le t_{-j};
    \end{cases}
  \end{align*}
  note that since $\alpha_j\le 1$, we have $1-\alpha_{-j}+\alpha_{-j}/\alpha_j\ge 1$ so that $t_{-j}^{1-\alpha_{-j}+\alpha_{-j}/\alpha_j} \le t_{-j}$.

  \emph{Case 1.} If $t_2^{\alpha_2}\le t_1^{\alpha_1}$, we have $C(\bm{t})=t_1^{1-\alpha_1}t_2$ and a calculation shows that
  \begin{align*}
    C^{-1}[C(\bm{t})u_1;t_2]
    &=\begin{cases}
      t_1^{1-\alpha_1}t_2^{\alpha_2} u_1,& 0\le u_1\le (t_{2}^{\alpha_{2}}/t_1^{\alpha_1})^{(1-\alpha_1)/\alpha_1},\\
      t_1u_1^{\frac{1}{1-\alpha_1}},& (t_{2}^{\alpha_{2}}/t_1^{\alpha_1})^{(1-\alpha_1)/\alpha_1} < u_1\le 1, %
    \end{cases}%
  \end{align*}
  and that $C^{-1}[C(\bm{t})u_2;t_1]=t_2u_2$, $0\le u_2\le 1$.
  We thus obtain that
  \begin{align*}
    C_{\bm{t}}(u_1,u_2)%
                       &= \frac{\min\{C^{-1}[C(\bm{t})u_1;t_2]^{1-\alpha_1}C^{-1}[C(\bm{t})u_2;t_1],\ C^{-1}[C(\bm{t})u_1;t_2]C^{-1}[C(\bm{t})u_2;t_1]^{1-\alpha_2}\}}{C(\bm{t})}\\
                       &=\begin{cases}
                         \min\bigl\{(t_2^{\alpha_2}/t_1^{\alpha_1})^{1-\alpha_1}u_1^{1-\alpha_1}u_2,\ u_1u_2^{1-\alpha_2}\bigr\},& 0\le u_1\le (t_{2}^{\alpha_{2}}/t_1^{\alpha_1})^{(1-\alpha_1)/\alpha_1},\\
                         \min\bigl\{u_1u_2,\ (t_1^{\alpha_1}/t_2^{\alpha_2})u_1^{\frac{1}{1-\alpha_1}}u_2^{1-\alpha_2}\bigr\},& (t_{2}^{\alpha_{2}}/t_1^{\alpha_1})^{(1-\alpha_1)/\alpha_1} < u_1\le 1.
                       \end{cases}
  \end{align*}
  Furthermore, the singular component of $C_{\bm{t}}(u_1,u_2)$ is given by all
  $(u_1,u_2)\in[0,1]^2$ such that
  $C^{-1}[C(\bm{t})u_1;t_{2})]^{\alpha_1}=C^{-1}[C(\bm{t})u_2;t_{1})]^{\alpha_2}$,
  which are all $u_2$ such that
  \begin{align*}
    u_2^{\alpha_2}
    &=(t_1^{\alpha_1}/t_2^{\alpha_2})^{1-\alpha_1}u_1^{\alpha_1},\quad 0\le u_1\le (t_{2}^{\alpha_{2}}/t_1^{\alpha_1})^{\frac{1-\alpha_1}{\alpha_1}}.
  \end{align*}

  \emph{Case 2.} If $t_2^{\alpha_2}>t_1^{\alpha_1}$, we have $C(\bm{t})=t_1t_2^{1-\alpha_2}$ and a calculation shows that $C^{-1}[C(\bm{t})u_1;t_2]=t_1u_1$, $0\le u_1\le 1$,
  and that
  \begin{align*}
    C^{-1}[C(\bm{t})u_2;t_1]
    &=\begin{cases}
      t_1^{\alpha_1}t_2^{1-\alpha_2}u_2,& 0\le u_2\le (t_1^{\alpha_1}/t_2^{\alpha_2})^{(1-\alpha_2)/\alpha_2},\\
      t_2u_2^{\frac{1}{1-\alpha_2}},& (t_1^{\alpha_1}/t_2^{\alpha_2})^{(1-\alpha_2)/\alpha_2}< u_2\le 1.
    \end{cases}
  \end{align*}
  We thus obtain that
  \begin{align*}
    C_{\bm{t}}(u_1,u_2)
    &=\begin{cases}
      \min\bigl\{u_1^{1-\alpha_1}u_2,\ (t_1^{\alpha_1}/t_2^{\alpha_2})^{1-\alpha_2}u_1u_2^{1-\alpha_2}\bigr\},& 0\le u_2\le (t_1^{\alpha_1}/t_2^{\alpha_2})^{(1-\alpha_2)/\alpha_2},\\
      \min\bigl\{(t_2^{\alpha_2}/t_1^{\alpha_1})u_1^{1-\alpha_1}u_2^{\frac{1}{1-\alpha_2}},\ u_1u_2\bigr\},& (t_1^{\alpha_1}/t_2^{\alpha_2})^{(1-\alpha_2)/\alpha_2} < u_2\le 1,
    \end{cases}
  \end{align*}
  which can also be obtained from the case $t_2^{\alpha_2}\le t_1^{\alpha_1}$ by interchanging $t_1,t_2$ and $\alpha_1,\alpha_2$ and $u_1,u_2$. %
  Furthermore, the singular component is given
  by all $u_2$ such that
  \begin{align*}
    u_2^{\alpha_2}
    &=(t_1^{\alpha_1}/t_2^{\alpha_2})^{1-\alpha_2}u_1^{\alpha_1},\quad 0\le u_1\le 1.
  \end{align*}

  Figure~\ref{fig:rtrunc:MO:copula:copulas} shows the bivariate right-truncated Marshall--Olkin copulas
  corresponding to the samples displayed in Figure~\ref{fig:rtrunc:MO:copula:samples}; with singular components
  depicted on the graphs of $C_{\bm{t}}$.

  One can also use above formulas for $C_{\bm{t}}$ to verify that if $t_1=t_2=t$, then
  $\lim_{t\downarrow 0}C_{(t,t)}(u_1,u_2)$ $=u_1u_2$ if $\alpha_1\neq\alpha_2$ and $\lim_{t\downarrow 0}C_{(t,t)}(u_1,u_2)=\min\{u_1^{1-\alpha}u_2,\ u_1u_2^{1-\alpha}\}$
  if $\alpha_1=\alpha_2=\alpha$, so limiting copulas of right-truncated Marshall--Olkin
  copulas are the independence copula or Cuadras--Aug\'e copulas.
  \begin{figure}[htbp]
    \centering
    \includegraphics[width=0.48\textwidth]{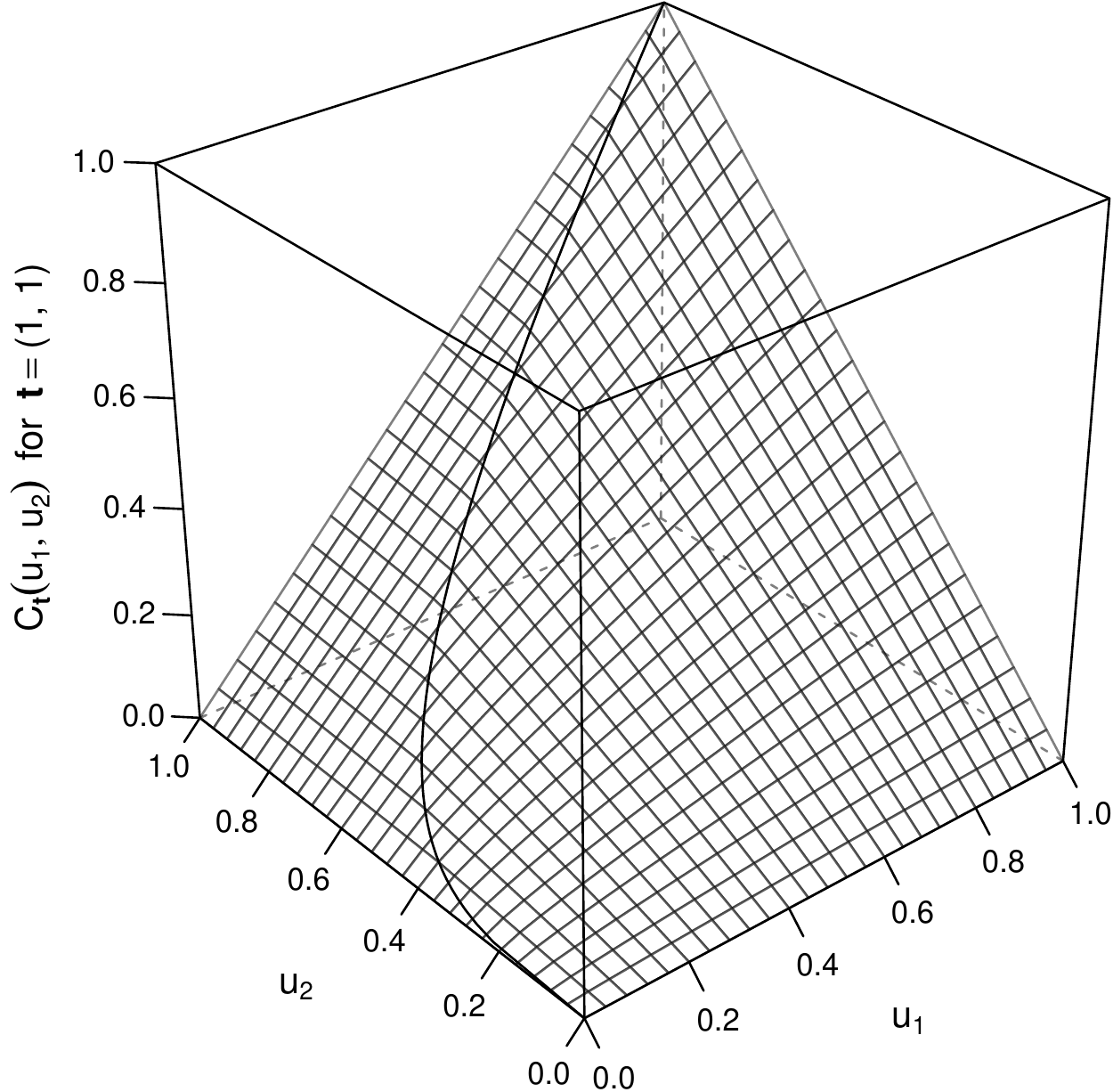}%
    \hfill
    \includegraphics[width=0.48\textwidth]{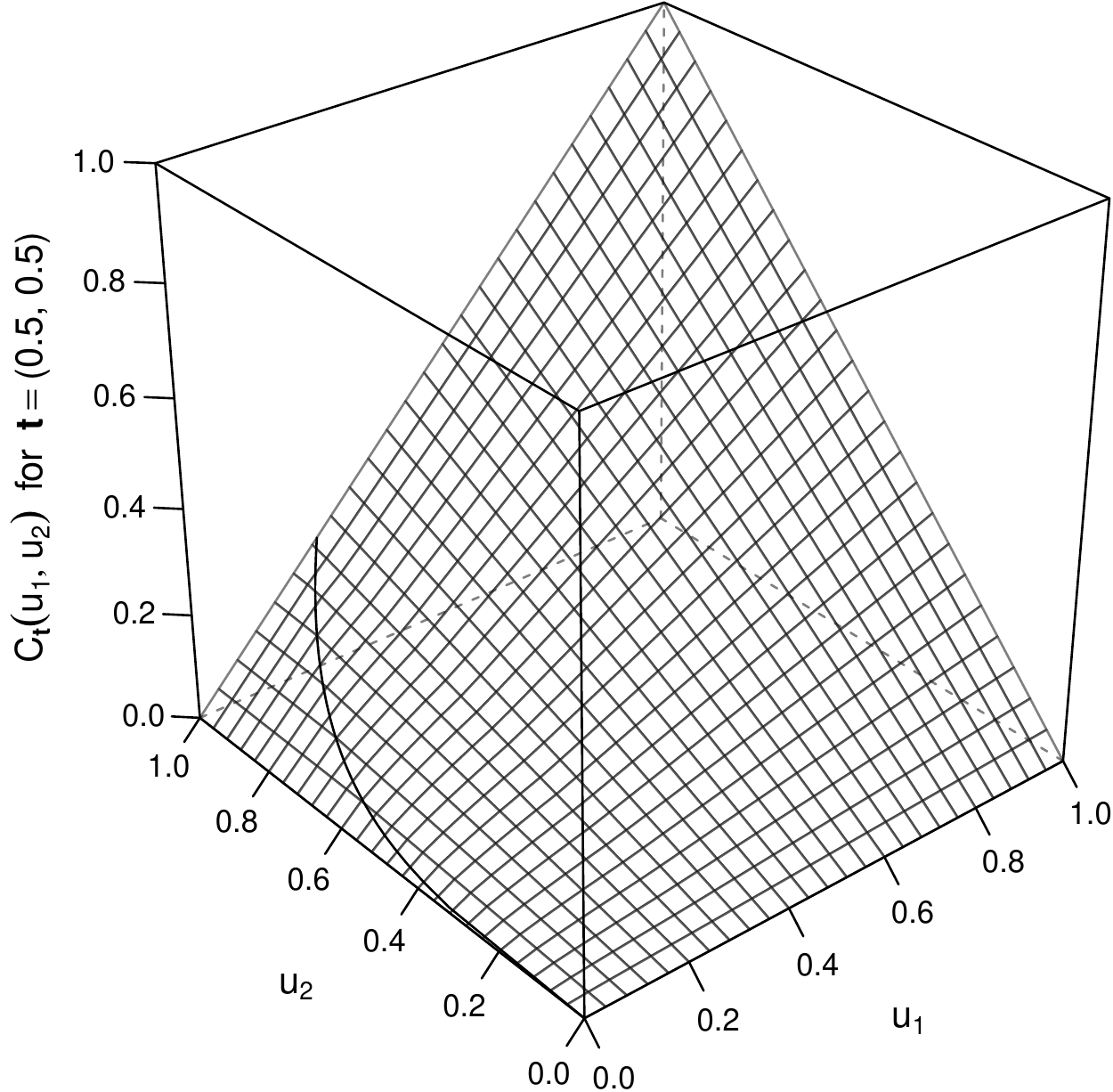}\\[2mm]
    \includegraphics[width=0.48\textwidth]{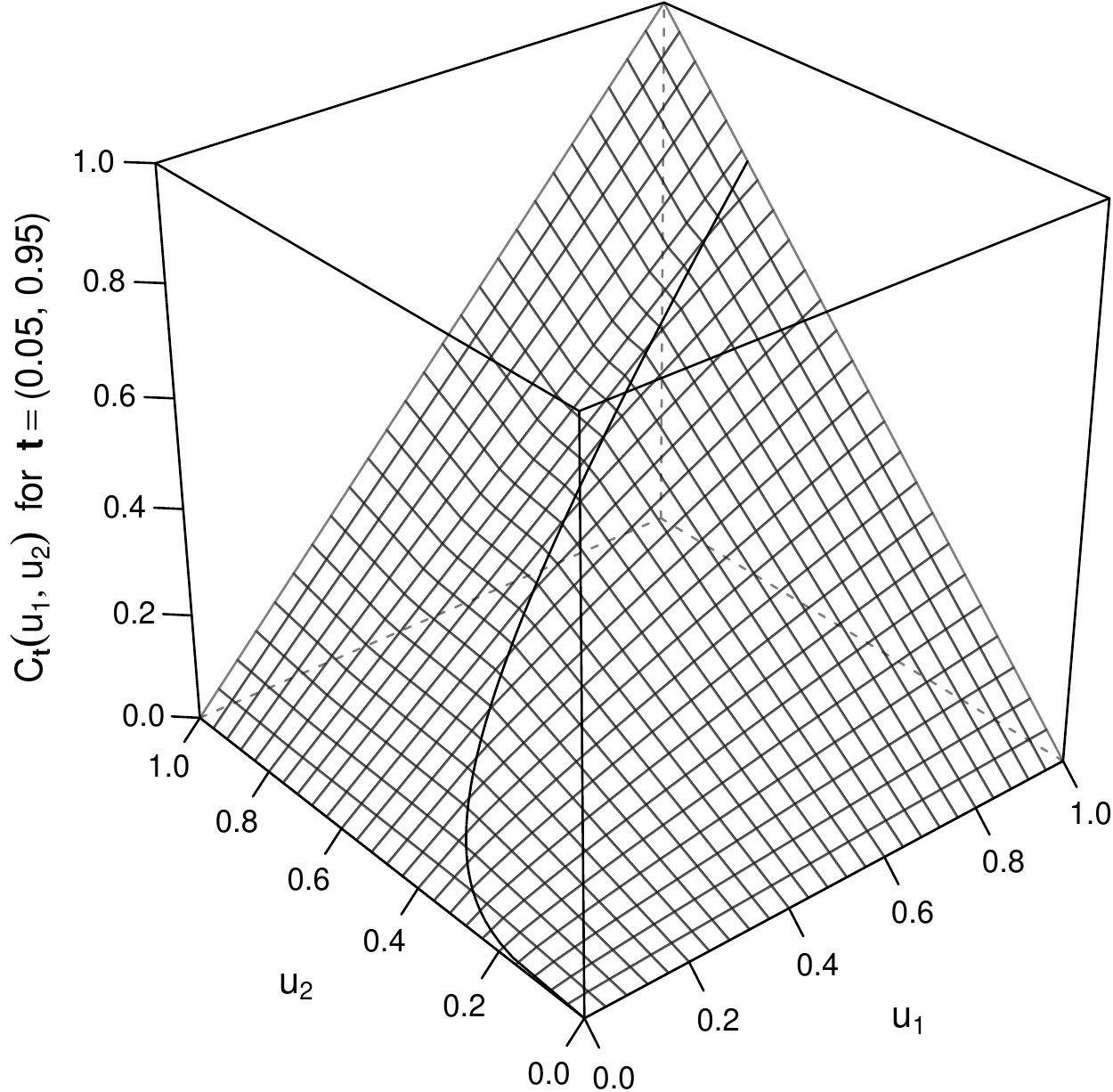}%
    \hfill
    \includegraphics[width=0.48\textwidth]{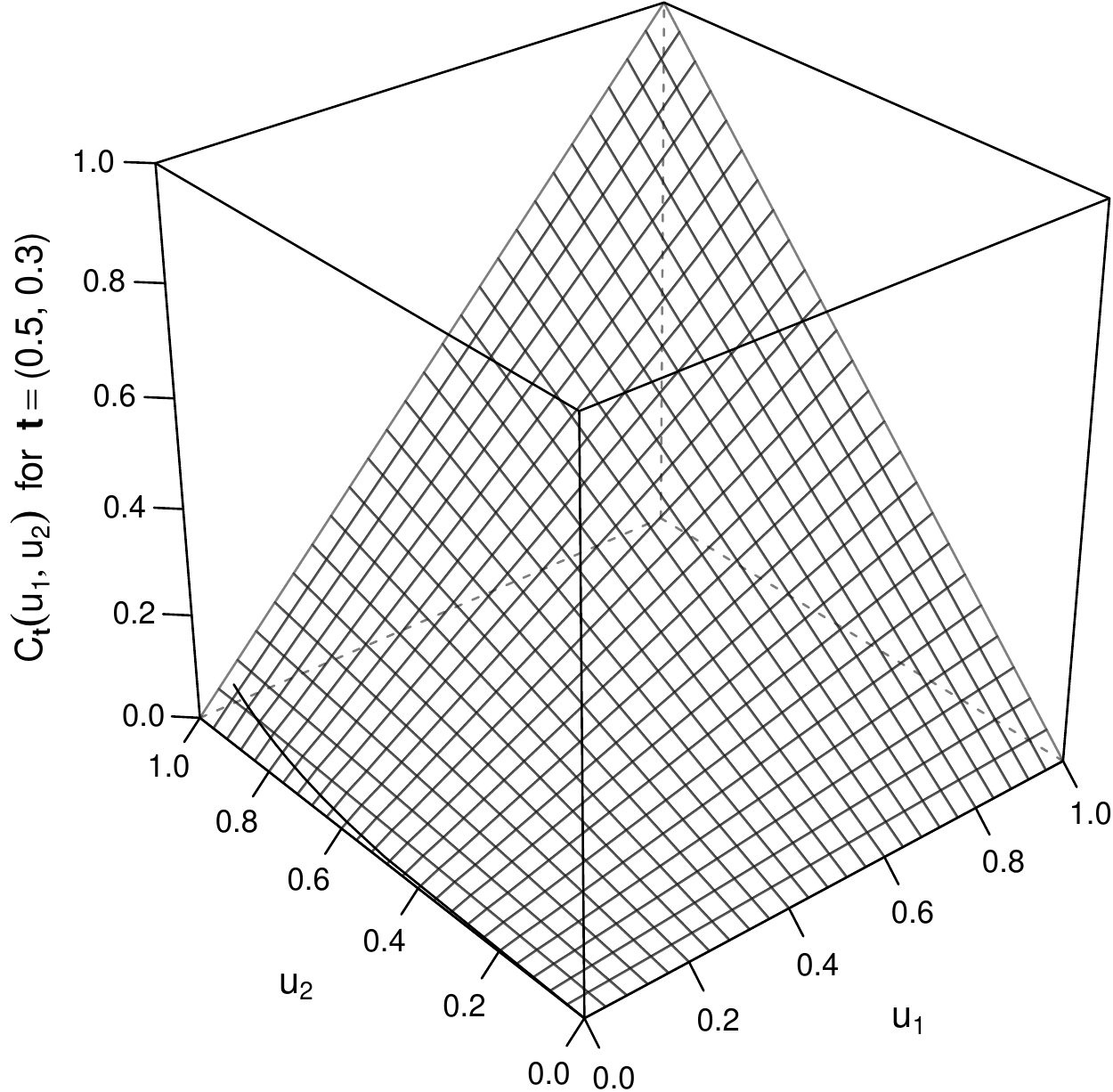}%
    \caption{Bivariate Marshall--Olkin copulas with
      parameters $\alpha_1=0.2$, $\alpha_2=0.7$ right-truncated at the indicated
      points $\bm{t}$.}
    \label{fig:rtrunc:MO:copula:copulas}
  \end{figure}
\end{example}

The following result provides a scaling property of right-truncated extreme
value copulas.  It implies that unless no truncation takes place (so unless
$\bm{t}=\bm{1}$), right-truncated extreme value copulas are not extreme value
copulas anymore. In particular, the bivariate right-truncated Marshall--Olkin
copulas from Example~\ref{ex:biv:MO} are not extreme value anymore.
\begin{proposition}[Scaling of extreme value copulas]\label{prop:scaling:EVCs}
  If $C$ is an extreme value copula, then $C_{\bm{t}}(\bm{u}^{\alpha})=C_{\bm{t}^{1/\alpha}}^\alpha(\bm{u})$, $\bm{u}\in[0,1]^d$,
  for all $\alpha>0$.
\end{proposition}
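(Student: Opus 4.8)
The plan is to reduce everything to the closed form for $C_{\bm{t}}$ in Proposition~\ref{prop:rtrunc:C} and to exploit the defining max-stability of extreme value copulas, namely $C(\bm{u}^{\beta})=C(\bm{u})^{\beta}$ for every $\beta>0$ (applied componentwise). The whole argument is essentially a bookkeeping exercise of pushing the power $\alpha$ through each ingredient of the formula $C_{\bm{t}}(\bm{u})=C(\{C^{-1}[C(\bm{t})u_j;\bm{t}_{-j}]\}_j)/C(\bm{t})$; note first that $\bm{t}\in(0,1]^d$ and $\alpha>0$ give $\bm{t}^{1/\alpha}\in(0,1]^d$ with $C(\bm{t}^{1/\alpha})>0$, so the right-hand side $C_{\bm{t}^{1/\alpha}}$ is well defined.

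First I would record the two scalar consequences of max-stability that drive the proof. Applying it to $\bm{t}$ gives $C(\bm{t})=C(\bm{t}^{1/\alpha})^{\alpha}$, equivalently $C(\bm{t})^{1/\alpha}=C(\bm{t}^{1/\alpha})$. More importantly, applying max-stability to the vector $(t_1^{1/\alpha},\dots,x_j,\dots,t_d^{1/\alpha})$ and then raising it to the power $\alpha$ yields the component-map identity $C(x_j^{\alpha};\bm{t}_{-j})=C(x_j;(\bm{t}^{1/\alpha})_{-j})^{\alpha}$; this just says that the increasing function $x_j\mapsto C(x_j;\bm{t}_{-j})$ equals $x_j\mapsto C(x_j^{1/\alpha};(\bm{t}^{1/\alpha})_{-j})^{\alpha}$ up to the reparametrization $x_j\mapsto x_j^{\alpha}$.

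Next I would invert this relation. Since both sides are increasing and the power maps are monotone bijections of $[0,1]$, the generalized inverses compose, giving $C^{-1}[w;\bm{t}_{-j}]=\bigl(C^{-1}[w^{1/\alpha};(\bm{t}^{1/\alpha})_{-j}]\bigr)^{\alpha}$. Substituting $w=C(\bm{t})u_j^{\alpha}$ and using $C(\bm{t})^{1/\alpha}=C(\bm{t}^{1/\alpha})$ turns the $j$th inner argument of $C_{\bm{t}}(\bm{u}^{\alpha})$ into $a_j^{\alpha}$, where $a_j=C^{-1}[C(\bm{t}^{1/\alpha})u_j;(\bm{t}^{1/\alpha})_{-j}]$ is exactly the $j$th inner argument of $C_{\bm{t}^{1/\alpha}}(\bm{u})$. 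Feeding the vector $(a_1^{\alpha},\dots,a_d^{\alpha})=\bm{a}^{\alpha}$ into $C$ and applying max-stability once more gives $C(\bm{a}^{\alpha})=C(\bm{a})^{\alpha}$, so that $C_{\bm{t}}(\bm{u}^{\alpha})=C(\bm{a})^{\alpha}/C(\bm{t})=\bigl(C(\bm{a})/C(\bm{t}^{1/\alpha})\bigr)^{\alpha}=C_{\bm{t}^{1/\alpha}}(\bm{u})^{\alpha}$, which is the claim.

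I expect the only delicate point to be the inversion step: one must make sure the generalized inverse of a composition of an increasing function with the monotone power map really factors as claimed, since $x_j\mapsto C(x_j;\bm{t}_{-j})$ need not be strictly increasing and is only invertible in the generalized sense of \cite{embrechtshofert2013c}. Because $t\mapsto t^{\alpha}$ is a continuous strictly increasing bijection of $[0,1]$, it commutes with taking generalized inverses, so this causes no trouble, but it is worth stating explicitly rather than silently treating every map as a genuine bijection.
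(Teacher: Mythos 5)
Your proof is correct and follows essentially the same route as the paper's: both reduce to the formula of Proposition~\ref{prop:rtrunc:C}, use max-stability to obtain the key identity $C^{-1}[y^{\alpha};\bm{t}_{-j}]=C^{-1}[y;\bm{t}_{-j}^{1/\alpha}]^{\alpha}$, and then push the power $\alpha$ through $C$ and the normalization $C(\bm{t})=C(\bm{t}^{1/\alpha})^{\alpha}$. The only difference is cosmetic: the paper derives the inversion identity via a chain of equivalences (implicitly treating $C(\cdot\,;\bm{t}_{-j})$ as invertible), whereas you derive the forward component-map identity first and then invert, explicitly justifying that the power map commutes with generalized inverses -- a slightly more careful handling of the same step.
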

\begin{proof}
  Note that $C^{-1}[y_j^\alpha;\bm{t}_{-j}]=v$ if and only if $C(t_1,\dots,t_{j-1},v,t_{j+1},\dots,t_d)=y_j^{\alpha}$ if and only if
  $C^{1/\alpha}(t_1,\dots,t_{j-1},v,t_{j+1},\dots,t_d)=y_j$ if and only if $C(t_1^{1/\alpha},\dots,t_{j-1}^{1/\alpha},v^{1/\alpha},t_{j+1}^{1/\alpha},\dots,$ $t_d^{1/\alpha})=y_j$
  if and only if $C^{-1}[y_j;\bm{t}_{-j}^{1/\alpha}]=v^{1/\alpha}$ if and only if $C^{-1}[y_j;\bm{t}_{-j}^{1/\alpha}]^\alpha=v$.
  Therefore, $C^{-1}[y_j^\alpha;\bm{t}_{-j}]=C^{-1}[y_j;\bm{t}_{-j}^{1/\alpha}]^\alpha$ and thus
  \begin{align*}
    C_{\bm{t}}(\bm{u}^\alpha)&=\frac{C(\{C^{-1}[C(\bm{t})u_j^\alpha;\bm{t}_{-j}]\}_j)}{C(\bm{t})}=\frac{C(\{C^{-1}[(C(\bm{t}^{1/\alpha})u_j)^\alpha;\bm{t}_{-j}]\}_j)}{C(\bm{t})}\\
                             &=\frac{C(\{C^{-1}[C(\bm{t}^{1/\alpha})u_j;\bm{t}_{-j}^{1/\alpha}]^\alpha\}_j)}{C(\bm{t})}=\frac{C^\alpha(\{C^{-1}[C(\bm{t}^{1/\alpha})u_j;\bm{t}_{-j}^{1/\alpha}]\}_j)}{C(\bm{t})}\\
    &=\biggl(\frac{C(\{C^{-1}[C(\bm{t}^{1/\alpha})u_j;\bm{t}_{-j}^{1/\alpha}]\}_j)}{C(\bm{t}^{1/\alpha})}\biggr)^\alpha=C_{\bm{t}^{1/\alpha}}^\alpha(\bm{u}),\quad \bm{u}\in[0,1]^d.\qedhere
  \end{align*}
\end{proof}

Selected further properties of general right-truncated copulas are derived in Appendix~\ref{sec:prop}.

\section{Right-truncated Archimedean copulas}%
In this section we characterize right-truncated Archimedean copulas as tilted Archimedean copulas,
address their properties and consider examples of right-truncated Archimedean copulas.

\subsection{Characterization and properties}
Archimedean copulas are widely used in finance, insurance and
risk management. A copula $C$ is an \emph{Archimedean copula} if it admits the
form
\begin{align}
  C(\bm{u})=\psi(\psii[u_1]+\dots+\psii[u_d]),\quad \bm{u}=(u_1,\dots,u_d)\in[0,1]^d,\label{eq:form:AC}
\end{align}
where $\psi:[0,\infty)\to[0,1]$ is known as \emph{(Archimedean) generator}.  Let
$\Psi_d$ denote the set of all Archimedean generators which generate a
$d$-dimensional Archimedean copula; see, for example,
\cite{mcneilneslehova2009}.

It is immediate from \eqref{eq:form:AC} that Archimedean copulas $C$ are
componentwise analytically invertible and thus lead to analytical
right-truncated copulas $C_{\bm{t}}$. The following result not only shows that
right-truncated Archimedean copulas are Archimedean again (known in the
bivariate case and for equal truncation points; see \cite{juriwuethrich2002} and
\cite{charpentiersegers2007}), but also that we know their type, even in the
general $d$-dimensional case and for a general truncation point $\bm{t}$. As it turns
out, right-truncated Archimedean copulas are so-called \emph{tilted Archimedean copulas}, that is Archimedean
copulas with a \emph{tilted generator} $\tpsi$ of the form
\begin{align*}
  \tpsi(t)=\frac{\psi(t+h)}{\psi(h)},\quad t\in[0,\infty),
\end{align*}
for some \emph{tilt} $h\ge 0$. Also note that in what follows we interpret a sum of
the form $\sum_{i=1}^n a_i + b$ as $(\sum_{i=1}^n a_i) + b$.
\begin{theorem}[Right-truncated Archimedean copulas]\label{thm:rtrunc:AC}
  Let $C$ be a $d$-dimensional Archimedean copula with generator $\psi\in\Psi_d$. For $\bm{t}\in(0,1]^d$ such that $C(\bm{t})>0$,
  let
  \begin{align*}
    \tpsi(t)=\frac{\psi(t+\psii[C(\bm{t})])}{C(\bm{t})},\quad t\in[0,\infty),
  \end{align*}
  with corresponding $\tpsii[u]=\psii[C(\bm{t})u]-\psii[C(\bm{t})]$, $u\in[0,1]$.
  Then the right-truncated copula at $\bm{t}$ is given by
  \begin{align*}
    C_{\bm{t}}(\bm{u})=\frac{\psi\bigl(\sum_{j=1}^d \psii[C(\bm{t})u_j]-(d-1)\psii[C(\bm{t})]\bigr)}{C(\bm{t})}=\tpsi\biggl(\,\sum_{j=1}^d\tpsii[u_j]\biggr),\quad\bm{u}\in[0,1]^d,
  \end{align*}
  that is $C_{\bm{t}}$ is Archimedean with tilted generator $\tpsi(t)=\psi(t+h)/\psi(h)$ %
  and tilt $h=\psii[C(\bm{t})]$.
\end{theorem}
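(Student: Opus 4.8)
The plan is to specialize the general formula of Proposition~\ref{prop:rtrunc:C}, namely $C_{\bm{t}}(\bm{u})=C(\{C^{-1}[C(\bm{t})u_j;\bm{t}_{-j}]\}_j)/C(\bm{t})$, to the Archimedean form~\eqref{eq:form:AC}. First I would determine the componentwise section and its generalized inverse. Writing $S_{-j}=\sum_{\tilde{j}\neq j}\psii[t_{\tilde{j}}]$, the form~\eqref{eq:form:AC} gives $C(x_j;\bm{t}_{-j})=\psi(\psii[x_j]+S_{-j})$, which is strictly increasing in $x_j$ on the region where $\psi$ is positive, with inverse $C^{-1}[y_j;\bm{t}_{-j}]=\psi(\psii[y_j]-S_{-j})$. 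Evaluating at $y_j=C(\bm{t})u_j$ and applying $\psii$ then yields the clean identity $\psii[C^{-1}[C(\bm{t})u_j;\bm{t}_{-j}]]=\psii[C(\bm{t})u_j]-S_{-j}$.

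Next I would substitute these into the Archimedean representation of $C$ and sum over $j$, so that the argument of the outer $\psi$ becomes $\sum_{j=1}^d\psii[C(\bm{t})u_j]-\sum_{j=1}^d S_{-j}$. The crucial bookkeeping step is that each summand $\psii[t_k]$ occurs in $S_{-j}$ for exactly the $d-1$ indices $j\neq k$, whence $\sum_{j=1}^d S_{-j}=(d-1)\sum_{k=1}^d\psii[t_k]$. Reading the Archimedean form backwards at $\bm{t}$ gives $\psii[C(\bm{t})]=\sum_{k=1}^d\psii[t_k]$, which collapses the double sum to $(d-1)\psii[C(\bm{t})]$ and produces the first asserted equality $C_{\bm{t}}(\bm{u})=\psi\bigl(\sum_{j=1}^d\psii[C(\bm{t})u_j]-(d-1)\psii[C(\bm{t})]\bigr)/C(\bm{t})$.

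Finally I would recast this in the tilted notation. With $h=\psii[C(\bm{t})]$ we have $\psi(h)=C(\bm{t})$, so $\tpsi(t)=\psi(t+h)/\psi(h)$ is exactly the stated tilted generator, and $\tpsii[u]=\psii[C(\bm{t})u]-h$ satisfies $\tpsi(\tpsii[u])=u$ by a one-line check, so it is indeed its inverse. Since $\sum_{j=1}^d\tpsii[u_j]+h=\sum_{j=1}^d\psii[C(\bm{t})u_j]-(d-1)h$, substituting into $\tpsi$ reproduces the first expression, giving the second equality $C_{\bm{t}}(\bm{u})=\tpsi(\sum_{j=1}^d\tpsii[u_j])$.

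The main obstacle is the careful handling of generalized inverses, since a generator $\psi\in\Psi_d$ is only $d$-monotone and hence strictly decreasing merely on the region where it is positive (it may become flat at $0$). I would justify every use of $\psii[\psi(\cdot)]=\mathrm{id}$ by observing that $C(\bm{t})>0$ together with $u_j\in[0,1]$ forces each argument $\psii[C(\bm{t})u_j]-S_{-j}$ to be nonnegative and each value $C(\bm{t})u_j$ to lie in $(0,C(\bm{t})]$, i.e.\ in the strictly decreasing range of $\psi$, so that $\psi$ and $\psii$ genuinely compose to the identity throughout. To legitimately call $C_{\bm{t}}$ Archimedean I would note that $\tpsi\in\Psi_d$, because shifting the argument by $h\ge0$ and dividing by the positive constant $\psi(h)$ both preserve $d$-monotonicity while $\tpsi(0)=1$; this is also automatic from the fact that $C_{\bm{t}}$ is already known to be a copula and has just been written in Archimedean form.
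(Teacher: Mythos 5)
Your proposal is correct and follows essentially the same route as the paper's proof: specialize Proposition~\ref{prop:rtrunc:C} via the componentwise inverse $C^{-1}[y_j;\bm{t}_{-j}]=\psi\bigl(\psii[y_j]-\sum_{\tilde{j}\neq j}\psii[t_{\tilde{j}}]\bigr)$, collapse the double sum $\sum_{j=1}^d\sum_{\tilde{j}\neq j}\psii[t_{\tilde{j}}]$ to $(d-1)\psii[C(\bm{t})]$, and rewrite the result in tilted form. Your extra care with generalized inverses (justifying $\psii\circ\psi=\mathrm{id}$ on the relevant range) and the observation that $\tpsi\in\Psi_d$ address points the paper's proof passes over silently, but they refine rather than alter the argument.
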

\begin{proof}
  For $j\in\{1,\dots,d\}$, $C(x_j;\bm{t}_{-j})=\psi\bigl(\psii[x_j]+\sum_{\tilde{j}\neq j}\psii[t_{\tilde{j}}]\bigr)$ and thus
  $C^{-1}[y_j;\bm{t}_{-j}]=\psi\bigl(\psii[y_j]-\sum_{\tilde{j}\neq j}\psii[t_{\tilde{j}}]\bigr)$.
  By Proposition~\ref{prop:rtrunc:C}, we thus have that for all $\bm{u}\in[0,1]^d$,
  {\allowdisplaybreaks%
  \begin{align}
    C_{\bm{t}}(\bm{u})&=\frac{C(\{C^{-1}[C(\bm{t})u_j;\bm{t}_{-j}]\}_j)}{C(\bm{t})}=\frac{\psi\bigl(\sum_j\psii\bigl[\psi(\psii[C(\bm{t})u_j]-\sum_{\tilde{j}\neq j}\psii[t_{\tilde{j}}])\bigr]\bigr)}{C(\bm{t})}\notag\\
                      &=\frac{\psi\bigl(\sum_{j=1}^d(\psii[C(\bm{t})u_j]-\sum_{\tilde{j}\neq j}\psii[t_{\tilde{j}}])\bigr)}{C(\bm{t})}\notag\\
                      &=\frac{\psi\bigl(\sum_{j=1}^d \psii[C(\bm{t})u_j]-\sum_{j=1}^d\sum_{\tilde{j}\neq j}\psii[t_{\tilde{j}}])}{C(\bm{t})}\notag\\
                      &=\frac{\psi\bigl(\sum_{j=1}^d \psii[C(\bm{t})u_j]-(d-1)\sum_{j=1}^d\psii[t_j])}{C(\bm{t})}\label{eq:AC:exch}\\
                      &=\frac{\psi\bigl(\sum_{j=1}^d \psii[C(\bm{t})u_j]-(d-1)\psii[C(\bm{t})]\bigr)}{C(\bm{t})}\notag\\
                      &=\frac{\psi\bigl(\sum_{j=1}^d (\psii[C(\bm{t})u_j]-\psii[C(\bm{t})])+\psii[C(\bm{t})]\bigr)}{C(\bm{t})}=\tpsi\biggl(\,\sum_{j=1}^d\tpsii[u_j]\biggr).\qedhere
  \end{align}}%
\end{proof}
\begin{remark}\label{rem:main:AC}
  \begin{enumerate}
  \item Tilted Archimedean copulas were introduced and studied in
    \cite[Section~4.2.1]{hofert2010c} under the assumption of $\psi\in\Psi_\infty$.
    By Bernstein's Theorem, see \cite[pp.~439]{feller1971}, it is well known that $\psi\in\Psi_\infty$ if and only if
    $\psi$ is the Laplace--Stieltjes transform of a distribution function
    $F$ on the positive real line known as \emph{frailty distribution} (in
    short: $\psi=\LS[F]$ or $F=\LSi[\psi]$); such
    $\psi$ are \emph{completely monotone} (that is $(-1)^k\psi^{(k)}(t)\ge
    0$, $t\in(0,\infty)$, for all
    $k\in\IN_0$) and many Archimedean generators fulfill this property (see below for examples).
    If $\psi\in\Psi_\infty$ then
    \begin{align*}
      \tpsi(t)=\frac{\psi(t+h)}{\psi(h)}=\int_0^\infty\exp(-tv)\frac{\exp(-hv)}{\psi(h)}\,\rd F(v)=\int_0^\infty\exp(-tv)\,\rd \tilde{F}(v),
    \end{align*}
    so that $\tilde{F}=\LSi[\tpsi]$. If $F$ has density $f$ (probability mass function $(p_k)_{k\in\IN}$),
    then $\tilde{F}$ has the \emph{exponentially tilted} density $\tilde{f}$ (probability mass function $(\tilde{p}_k)_{k\in\IN}$)
    given by
    \begin{align}
      \tilde{f}(v)=\frac{\exp(-hv)}{\psi(h)}f(v),\ v\in(0,\infty),\quad\biggl(\tilde{p}_k=\frac{\exp(-hk)}{\psi(h)}p_k,\ k\in\IN\biggr).\label{eq:exp:tilted:dens:pmf}
    \end{align}
    This explains the ``tilted'' in the name of tilted Archimedean copulas.
    Note that for right-truncated Archimedean copulas,
    $\psi(h)=C(\bm{t})$, in particular, neither the order of the truncation
    points nor whether they are all equal (as long as
    $C(\bm{t})$ remains constant) affects $h$ and thus how much $\psi$ is tilted.
  \item Knowing a sampling algorithm for the frailty distribution $F$ is crucial for
    efficiently sampling Archimedean copulas with the so-called Marshall--Olkin
    algorithm; see \cite{marshallolkin1988}.  This is the case for many
    well-known Archimedean families; see, for example,
    \cite[Section~2.4]{hofert2010c}. For sampling the corresponding tilted
    Archimedean copulas, one can in principle use rejection based on $F$,
    however, the resulting rejection constant is $1/\psi(h)$, which can be large.
    Under the assumption that $\psi^{1/m}\in\Psi_\infty$ for all $m\in\IN$ and that there is a sampling algorithm
    for $\psi^{1/m}$, a \emph{fast rejection algorithm} with rejection constant $\log(1/\psi(h))$ is derived
    in %
    \cite[Section~4.2.1]{hofert2010c}; %
    see also \cite{hofert2011a} and \cite{hofert2012a}. The algorithm is
    available in the \R\ package \texttt{copula} for the implemented Archimedean
    families.  This allows one to more efficiently sample from right-truncated
    Archimedean copulas in comparison to Algorithm~\ref{alg:crude:reject} which
    becomes slow especially in large dimensions if at least one component of
    $\bm{t}$ is small.
  \item As a consequence of Theorem~\ref{thm:rtrunc:AC},
    right-truncated Archimedean copulas -- as Archimedean copulas -- are
    exchangeable, independently of the choice of truncation point
    $\bm{t}\in(0,1]^d$. This is rather unexpected in the light of
    Proposition~\ref{prop:rtrunc:C} because
    $C(\cdot\,;\bm{t}_{-j})$,
    $j=1,\dots,d$, are not all equal (and so aren't
    $C^{-1}[\cdot\,;\bm{t}_{-j}]$,
    $j=1,\dots,d$) unless all truncation points are equal. The reason why
    right-truncated Archimedean copulas are exchangeable follows from
    Step~\eqref{eq:AC:exch} in the proof of Theorem~\ref{thm:rtrunc:AC} since
    the summation over all $j$ makes the sums $\sum_{\tilde{j}\neq
      j}\psii[t_{\tilde{j}}]$ (which can differ for different $j$'s) equal.
  \item\label{lim:AC:t:to:0} Another advantage of knowning that right-truncated Archimedean copulas
    are tilted Archimedean copulas is that we can easily obtain the limit for
    $\bm{t}\downarrow\bm{0}$ (that is, $t_j\downarrow
    0$ for at least one
    $j\in\{1,\dots,d\}$) under the assumption that $\psi\in\RV_{-1/\theta}^\infty$ for $\theta>0$
    (that is $\psi$ is regularly varying at
    infinity with index $-1/\theta$). %
    This result is known since \cite{larssonneslehova2011}
    (\cite{juriwuethrich2002} and \cite{charpentiersegers2007} considered the
    bivariate case and assumed equal truncation points). To see it through the lens
    of tilted Archimedean copulas, let $\psi\in\RV_{-1/\theta}^\infty$ for $\theta>0$,
    let $\tpsi$ be as in Theorem~\ref{thm:rtrunc:AC} and, without loss of generality, let $h=\psii[C(\bm{t})]>0$.
    Most importantly, we can now use that $\tpsi$ and $\tpsi(ht)$ generate the same copula and that
    \begin{align*}
      \lim_{\bm{t}\downarrow\bm{0}}\tpsi(ht)=\lim_{h\to\infty}\frac{\psi(ht+h)}{\psi(h)}=\lim_{h\to\infty}\frac{\psi(h(1+t))}{\psi(h)}=(1+t)^{-1/\theta},
    \end{align*}
    which is a generator of a Clayton copula with parameter
    $\theta$. Therefore,
    $\lim_{\bm{t}\downarrow\bm{0}}C_{\bm{t}}$ equals a Clayton copula with
    parameter
    $\theta$ in this case. Note that this result is only of limited use as
    several well-known Archimedean generators are not regularly varying with
    negative index (see later) and thus a limiting Clayton copula is not an adequate
    model in these situations.
  \item For a formula for Kendall's tau for tilted Archimedean copulas, see
    \cite[Section~4.2.1]{hofert2010c}.
  \end{enumerate}
\end{remark}

The following corollary shows that if an Archimedean copula $C$ admits
a density (which is the case for many well-known examples), then so does its
right-truncated version $C_{\bm{t}}$. Moreover, it also reveals that the
density of $C_{\bm{t}}$ is numerically as tractable as that of $C$, an important
property for (log-)likelihood-based inference of right-truncated Archimedean (and thus Archimedean)
copulas.
\begin{corollary}[Densities of right-truncated Archimedean copulas]
  Let $C$ be a $d$-dimensional Archimedean copula
  with generator $\psi\in\Psi_d$ that is $d$ times continuously differentiable.
  Then the corresponding right-truncated copula $C_{\bm{t}}$ admits the density
  \begin{align*}
    c_{\bm{t}}(\bm{u})=\frac{\tpsi^{(d)}(\sum_{j=1}^d\tpsii[u_j])}{\prod_{j=1}^d\tpsi'(\tpsii[u_j])},\quad\bm{u}\in(0,1)^d,
  \end{align*}
  where $\tpsi^{(d)}(t)=\psi^{(d)}(t+h)/\psi(h)$.
  Numerically stable proper logarithms of $\psi^{(d)}$ are available in many cases;
  see \cite{hofertmaechlermcneil2012} or the \R\ package \texttt{copula} for more
  details.
\end{corollary}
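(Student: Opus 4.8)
The plan is to invoke Theorem~\ref{thm:rtrunc:AC} to reduce the claim to the standard density formula for an Archimedean copula, and then to read off the required derivative of the tilted generator. By Theorem~\ref{thm:rtrunc:AC}, $C_{\bm{t}}$ is itself Archimedean, with tilted generator $\tpsi(t)=\psi(t+h)/\psi(h)$ and tilt $h=\psii[C(\bm{t})]$, so that $C_{\bm{t}}(\bm{u})=\tpsi\bigl(\sum_{j=1}^d\tpsii[u_j]\bigr)$. Since $\psi$ is $d$ times continuously differentiable and neither the shift $t\mapsto t+h$ nor division by the positive constant $\psi(h)=C(\bm{t})$ affects differentiability, $\tpsi$ is $d$ times continuously differentiable as well. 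Hence $C_{\bm{t}}$ is $d$ times continuously differentiable on $(0,1)^d$ and admits a density there.

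Next I would differentiate $C_{\bm{t}}(\bm{u})=\tpsi(s)$, where $s=\sum_{j=1}^d\tpsii[u_j]$, successively in $u_1,\dots,u_d$. A single chain-rule step gives $\partial_{u_j}s=(\tpsii)'(u_j)$, and by the inverse function theorem applied to the identity $\tpsi(\tpsii[u_j])=u_j$ one has $(\tpsii)'(u_j)=1/\tpsi'(\tpsii[u_j])$. Because each term $\tpsii[u_j]$ depends only on its own argument $u_j$, at each differentiation the partial $\partial_{u_k}$ hits only the outer factor $\tpsi^{(k-1)}(s)$; a short induction over $k=1,\dots,d$ therefore yields $\partial_{u_1}\cdots\partial_{u_d}\tpsi(s)=\tpsi^{(d)}(s)\prod_{j=1}^d(\tpsii)'(u_j)$. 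Substituting the inverse-derivative expression gives
\begin{align*}
  c_{\bm{t}}(\bm{u})=\tpsi^{(d)}\Bigl(\sum_{j=1}^d\tpsii[u_j]\Bigr)\prod_{j=1}^d\frac{1}{\tpsi'(\tpsii[u_j])}=\frac{\tpsi^{(d)}(\sum_{j=1}^d\tpsii[u_j])}{\prod_{j=1}^d\tpsi'(\tpsii[u_j])},
\end{align*}
which is the asserted form. (As a sanity check, $\tpsi^{(d)}$ and each $\tpsi'$ carry matching signs $(-1)^d$ and $-1$, so numerator and denominator have sign $(-1)^d$ and the ratio is nonnegative, as a density must be.)

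Finally I would compute $\tpsi^{(d)}$ explicitly: differentiating $\tpsi(t)=\psi(t+h)/\psi(h)$ a total of $d$ times in $t$, the constant factor $\psi(h)$ is carried along and the unit derivative of $t\mapsto t+h$ contributes no extra factor, so $\tpsi^{(d)}(t)=\psi^{(d)}(t+h)/\psi(h)$, exactly as stated.

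I expect no serious obstacle here. The only point requiring care is the legitimacy of the formal density formula, i.e.\ that the mixed partial derivative $\partial_{u_1}\cdots\partial_{u_d}C_{\bm{t}}$ exists and is continuous; this is guaranteed precisely by the $d$-fold continuous differentiability hypothesis on $\psi$, which transfers to $\tpsi$. Everything else is a routine application of the chain rule and the inverse function theorem.
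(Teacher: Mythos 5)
Your proof is correct and takes exactly the route the paper intends for this corollary, which it states without an explicit proof: by Theorem~\ref{thm:rtrunc:AC}, $C_{\bm{t}}$ is Archimedean with generator $\tpsi$, and the claimed formula is then the standard Archimedean density formula (mixed partial via the chain rule and $(\tpsii)'(u)=1/\tpsi'(\tpsii[u])$), combined with the observation that $\tpsi^{(d)}(t)=\psi^{(d)}(t+h)/\psi(h)$ since the shift by $h$ and the constant $\psi(h)=C(\bm{t})$ do not affect differentiation.
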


Archimedean copulas are especially of interest due to their ability to capture
tail dependence. The following lemma provides formulas for the coefficients of
tail dependence for right-truncated Archimedean copulas.
\begin{lemma}[Tail dependence for right-truncated Archimedean copulas]\label{lem:td:rtrunc:AC}
  Let $C$ be a bivariate Archimedean copula with generator $\psi\in\Psi_2$ and
  let $\bm{t}=(t_1,t_2)\in(0,1]^2$. Assuming the limits to exist, the
  coefficients of lower and upper tail dependence of the right-truncated
  Archimedean copula $C_{\bm{t}}$ %
  are given by
  \begin{align*}
    \lambda_{\text{l}}^{C_{\bm{t}}}=\lim_{t\uparrow\infty}\frac{\psi(2t+h)}{\psi(t+h)}\quad\text{and}\quad \lambda_{\text{u}}^{C_{\bm{t}}}=2-\lim_{t\downarrow 0}\frac{1-\psi(2t+h)/\psi(h)}{1-\psi(t+h)/\psi(h)}.
  \end{align*}
  If $\psi$ is differentiable and the limits exist, then
  \begin{align*}
    \lambda_{\text{l}}^{C_{\bm{t}}}=2\lim_{t\uparrow\infty}\frac{\psi'(2t+h)}{\psi'(t+h)}\quad\text{and}\quad \lambda_{\text{u}}^{C_{\bm{t}}}=2-2\lim_{t\downarrow 0}\frac{\psi'(2t+h)}{\psi'(t+h)}.
  \end{align*}
\end{lemma}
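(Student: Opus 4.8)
The plan is to invoke Theorem~\ref{thm:rtrunc:AC}, which identifies $C_{\bm{t}}$ as a bivariate Archimedean copula with tilted generator $\tpsi(t)=\psi(t+h)/\psi(h)$ and tilt $h=\psii[C(\bm{t})]$. Consequently the coefficients of tail dependence of $C_{\bm{t}}$ are nothing but those of a generic bivariate Archimedean copula, read off its generator $\tpsi$ in place of $\psi$. I would therefore start from the classical Archimedean tail-dependence formulas: for a bivariate Archimedean copula with generator $g$ (so $g(0)=1$ and $\lim_{s\to\infty}g(s)=0$) the diagonal section is $C(u,u)=g(2g^{-1}(u))$, and one has $\lambda_{\text{l}}=\lim_{u\downarrow 0}C(u,u)/u=\lim_{s\uparrow\infty}g(2s)/g(s)$ together with $\lambda_{\text{u}}=2-\lim_{u\uparrow 1}(1-C(u,u))/(1-u)=2-\lim_{s\downarrow 0}(1-g(2s))/(1-g(s))$, where the substitution $s=g^{-1}(u)$ is used (it sends $u\downarrow 0$ to $s\uparrow\infty$ and $u\uparrow 1$ to $s\downarrow 0$ because $g$ is decreasing with the stated boundary behavior).

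Substituting $g=\tpsi$ then yields the first pair of formulas at once. In the lower-tail limit the normalizing factor $\psi(h)$ cancels between numerator and denominator, leaving $\lambda_{\text{l}}^{C_{\bm{t}}}=\lim_{t\uparrow\infty}\psi(2t+h)/\psi(t+h)$; in the upper-tail limit it does not cancel but remains inside, giving $\lambda_{\text{u}}^{C_{\bm{t}}}=2-\lim_{t\downarrow 0}\bigl(1-\psi(2t+h)/\psi(h)\bigr)/\bigl(1-\psi(t+h)/\psi(h)\bigr)$, exactly as claimed. The only points worth checking here are that $\tpsi$ inherits $\tpsi(0)=1$ and $\lim_{t\to\infty}\tpsi(t)=0$ from $\psi$, so that the change of variables is legitimate; both follow immediately from $\tpsi(t)=\psi(t+h)/\psi(h)$ and $\psi(h)=C(\bm{t})>0$.

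For the differentiable versions I would apply L'Hôpital's rule, having first observed that each limit is of the indeterminate form $0/0$: as $t\uparrow\infty$ both $\psi(2t+h)$ and $\psi(t+h)$ tend to $0$, while as $t\downarrow 0$ both $1-\psi(2t+h)/\psi(h)$ and $1-\psi(t+h)/\psi(h)$ tend to $0$. Differentiating numerator and denominator in $t$ produces a factor $2$ from the chain rule applied to $\psi(2t+h)$, and in the upper-tail case the common factor $-1/\psi(h)$ cancels, so both limits collapse to $2\lim\psi'(2t+h)/\psi'(t+h)$ over their respective ranges; this delivers $\lambda_{\text{l}}^{C_{\bm{t}}}=2\lim_{t\uparrow\infty}\psi'(2t+h)/\psi'(t+h)$ and $\lambda_{\text{u}}^{C_{\bm{t}}}=2-2\lim_{t\downarrow 0}\psi'(2t+h)/\psi'(t+h)$. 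Once Theorem~\ref{thm:rtrunc:AC} is in hand the argument is essentially mechanical, so I do not anticipate a genuine obstacle; the only mild subtlety is the routine verification that L'Hôpital's rule applies, i.e.\ that the one-sided derivative-ratio limits exist, which the statement explicitly assumes.
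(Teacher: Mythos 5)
Your proof is correct and takes essentially the same route as the paper: both invoke Theorem~\ref{thm:rtrunc:AC} to identify $C_{\bm{t}}$ as Archimedean with tilted generator $\tpsi(t)=\psi(t+h)/\psi(h)$, evaluate the tail coefficients on the diagonal $C_{\bm{t}}(u,u)=\tpsi(2\tpsii[u])$ via the substitution $t=\tpsii[u]$ (with $\psi(h)$ cancelling in the lower-tail limit but not the upper one), and then obtain the derivative forms by l'H\^opital's rule. The only cosmetic difference is that you first state the generic Archimedean formulas for an arbitrary generator $g$ and then specialize $g=\tpsi$, whereas the paper computes directly with $\tpsi$; the content is identical.
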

\begin{proof}
  For the coefficient of lower tail dependence, we have
  \begin{align*}
    \lambda_{\text{l}}^{C_{\bm{t}}}=\lim_{u\downarrow 0}\frac{C_{\bm{t}}(u,u)}{u}=\lim_{u\downarrow 0}\frac{\tpsi(2\tpsii[u])}{u}=\lim_{t\uparrow\infty}\frac{\tpsi(2t)}{\tpsi(t)}=\lim_{t\uparrow\infty}\frac{\psi(2t+h)}{\psi(t+h)}
  \end{align*}
  and, if $\psi$ is differentiable, an application of l'H\^opital's rule leads to $\lambda_{\text{l}}^{C_{\bm{t}}}=2\lim_{t\uparrow\infty}\frac{\psi'(2t+h)}{\psi'(t+h)}$.

  For the coefficient of upper tail dependence, we have
  \begin{align*}
    \lambda_{\text{u}}^{C_{\bm{t}}}&=\lim_{u\uparrow 1}\frac{1-2u+C_{\bm{t}}(u,u)}{1-u}=\lim_{u\uparrow1}\frac{1-2u+\tpsi(2\tpsii(u))}{1-u}=2-\lim_{u\uparrow1}\frac{1-\tpsi(2\tpsii(u))}{1-u}\\
    &=2-\lim_{t\downarrow0}\frac{1-\tpsi(2t)}{1-\tpsi(t)}=2-\lim_{t\downarrow0}\frac{1-\psi(2t+h)/\psi(h)}{1-\psi(t+h)/\psi(h)}
  \end{align*}
  and, if $\psi$ is differentiable, an application of l'H\^opital's rule leads to $\lambda_{\text{u}}^{C_{\bm{t}}}=2-2\lim_{t\downarrow 0}\frac{\psi'(2t+h)}{\psi'(t+h)}$.
\end{proof}
The formulas in Lemma~\ref{lem:td:rtrunc:AC} generalize those known for
Archimedean copulas (take $h=0$). They allow us to derive the coefficients of
tail dependence for any truncation point, regardless of whether the truncation
takes place in a symmetric manner ($t_1=t_2$) or not. This simply comes from the
fact that tilted Archimedean copulas are Archimedean copulas themselves and thus
exchangeable, the truncation point only enters the generator as a parameter
through the tilt $h=\psii[C(\bm{t})]$; compare with Example~\ref{ex:surv:G}. %

In line with intuition, the following result shows that a majority of
right-truncated Archimedean copulas satisfy
$\lambda_{\text{l}}^{C_{\bm{t}}}=\lambda_{\text{l}}^C$ and
$\lambda_{\text{u}}^{C_{\bm{t}}}=0$.
\begin{proposition}[Preserving lower tail dependence, cutting out the upper one]\label{prop:td:rtrunc:AC}
  Let $C$ be a bivariate Archimedean copula with generator $\psi\in\Psi_2$ and
  let $\bm{t}=(t_1,t_2)\in(0,1]^2$.
  \begin{enumerate}
  \item If $C$ has coefficient of lower tail dependence $\lambda_{\text{l}}^C$
    and $\lim_{t\uparrow\infty}\frac{\psi(t)}{\psi(t+h/2)}$ exists, then
    $\lambda_{\text{l}}^{C_{\bm{t}}}\ge \lambda_{\text{l}}^C$. Furthermore, if
    $\psi\in\RV_{-\alpha}$ for $\alpha>0$, then
    $\lambda_{\text{l}}^{C_{\bm{t}}}=\lambda_{\text{l}}^C$.
  \item\label{prop:td:rtrunc:AC:2} If $\psi$ is differentiable and $\bm{t}\neq(1,1)$, then $\lambda_{\text{u}}^{C_{\bm{t}}}=0$.
  \end{enumerate}
\end{proposition}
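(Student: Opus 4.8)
The plan is to read off both parts directly from the tail-dependence formulas in Lemma~\ref{lem:td:rtrunc:AC}, using throughout that the truncation enters only through the fixed tilt $h=\psii[C(\bm{t})]=\psii[t_1]+\psii[t_2]$, which appears as a constant shift in the argument of $\psi$.

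For part~1, I would begin from $\lambda_{\text{l}}^{C_{\bm{t}}}=\lim_{t\uparrow\infty}\psi(2t+h)/\psi(t+h)$ and substitute $s=t+h/2$. This symmetrises the numerator to $\psi(2s)$ and turns the denominator into $\psi(s+h/2)$; since $t\uparrow\infty\iff s\uparrow\infty$, we get $\lambda_{\text{l}}^{C_{\bm{t}}}=\lim_{s\uparrow\infty}\psi(2s)/\psi(s+h/2)$. I would then split this ratio as $\bigl(\psi(2s)/\psi(s)\bigr)\cdot\bigl(\psi(s)/\psi(s+h/2)\bigr)$: the first factor tends to $\lambda_{\text{l}}^C$ by definition, and the second tends to a limit that exists by hypothesis. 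Because $\psi$ is non-increasing and $h\ge0$ we have $\psi(s)\ge\psi(s+h/2)$, so that second limit is $\ge1$; together with $\lambda_{\text{l}}^C\ge0$ this gives $\lambda_{\text{l}}^{C_{\bm{t}}}\ge\lambda_{\text{l}}^C$. For the regularly varying refinement, I would use that an additive shift is asymptotically negligible for $\psi\in\RV_{-\alpha}$: writing $s+h/2=s(1+h/(2s))$ and invoking uniform convergence on compacta for regularly varying functions gives $\psi(s+h/2)/\psi(s)\to1$, so the second factor equals $1$ and equality holds.

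For part~2, the decisive point is that $\bm{t}\neq(1,1)$ forces $h=\psii[t_1]+\psii[t_2]>0$, since at least one summand is strictly positive. Starting from the differentiable form $\lambda_{\text{u}}^{C_{\bm{t}}}=2-2\lim_{t\downarrow0}\psi'(2t+h)/\psi'(t+h)$, I would observe that as $t\downarrow0$ both arguments $2t+h$ and $t+h$ converge to the interior point $h>0$. Since $\psi(h)=C(\bm{t})>0$ locates $h$ strictly inside the region where $\psi$ is positive and strictly decreasing, we have $\psi'(h)\neq0$, and continuity of $\psi'$ at $h$ yields $\psi'(2t+h)/\psi'(t+h)\to\psi'(h)/\psi'(h)=1$, whence $\lambda_{\text{u}}^{C_{\bm{t}}}=0$. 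This matches the intuition that right-truncation excises the upper-right corner of the support, leaving no upper tail dependence.

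The substitution, factoring, and the limit of a continuous ratio are all routine; I expect the only genuinely delicate step to be the regularly varying sub-case of part~1, where one must justify that the shift $h/2$ is asymptotically irrelevant---this is precisely where the uniform convergence theorem for regularly varying functions, rather than a naive limit swap, does the work. A minor secondary point in part~2 is confirming $\psi'(h)\neq0$ and the continuity of $\psi'$ at $h$, both of which follow from $h$ lying strictly inside the support where the generator is smooth and strictly decreasing.
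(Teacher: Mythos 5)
Your proof is correct and follows essentially the same route as the paper: the same product decomposition $\frac{\psi(2t+h)}{\psi(t+h)}=\frac{\psi(2s)}{\psi(s)}\cdot\frac{\psi(s)}{\psi(s+h/2)}$ with $s=t+h/2$ for part~1, and the same evaluation $\psi'(2t+h)/\psi'(t+h)\to\psi'(h)/\psi'(h)=1$ (using $\bm{t}\neq(1,1)$, hence $h>0$ and $\psi'(h)<0$) for part~2. The only deviation is in the regularly varying sub-case, where you invoke Karamata's uniform convergence theorem to make the shift $h/2$ asymptotically negligible, while the paper uses the elementary sandwich $\psi((1+\eps)t)\le\psi(t+h/2)\le\psi(t)$ (valid for all sufficiently large $t$) together with $\psi((1+\eps)t)/\psi(t)\to(1+\eps)^{-\alpha}$ and $\eps\downarrow 0$; both justifications are valid.
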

\begin{proof}
  \begin{enumerate}
  \item By Lemma~\ref{lem:td:rtrunc:AC},
    \begin{align*}
      \lambda_{\text{l}}^{C_{\bm{t}}}=\lim_{t\uparrow\infty}\frac{\psi(2t+h)}{\psi(t+h)}=\lim_{t\uparrow\infty}\frac{\psi(2(t+h/2))}{\psi(t+h/2)}\lim_{t\uparrow\infty}\frac{\psi(t+h/2)}{\psi(t+h/2 + h/2)}=\lambda_{\text{l}}^C\lim_{t\uparrow\infty}\frac{\psi(t)}{\psi(t+h/2)}.
    \end{align*}
    In particular, since $\psi(t+h/2)\le \psi(t)$, we have $\lambda_{\text{l}}^{C_{\bm{t}}}\ge\lambda_{\text{l}}^C$. For the remaining part,
    let $\eps>0$ and note that for all $t$ sufficiently large, $\psi((1+\eps)t)\le\psi(t+h/2)\le\psi(t)$.
    If $\psi\in\RV_{-\alpha}$ for $\alpha>0$, we obtain that
    \begin{align*}
      1\le\lim_{t\uparrow\infty}\frac{\psi(t)}{\psi(t+h/2)}\le \lim_{t\uparrow\infty}\frac{\psi(t)}{\psi((1+\eps)t)}=\frac{1}{\lim_{t\uparrow\infty}\frac{\psi((1+\eps)t)}{\psi(t)}}=\frac{1}{(1+\eps)^{-\alpha}}=(1+\eps)^\alpha
    \end{align*}
    which converges to $1$ for $\eps\downarrow 0$ and thus $\lambda_{\text{l}}^{C_{\bm{t}}}=\lambda_{\text{l}}^C$.
  \item If $\psi$ is differentiable and $\bm{t}\neq(1,1)$, then
    $h=\psii[C(\bm{t})]>0$ and thus $\psi'(h)\in(-\infty,0)$ which, by Lemma~\ref{lem:td:rtrunc:AC}, implies that
    $\lambda_{\text{u}}^{C_{\bm{t}}}=2-2\psi'(h)/\psi'(h)=0$.
  \end{enumerate}
\end{proof}

Many properties of right-truncated Archimedean copulas follow immediately from
the fact that they are tilted and thus Archimedean copulas. For example, the Kendall
distribution $K_{C_{\bm{t}}}(u)=\P(C_{\bm{t}}(\bm{U}_{\bm{t}})\le u)$ (for
$\bm{U}_{\bm{t}}\sim C_{\bm{t}}$) of a right-truncated Archimedean copula $C$
with tilt $h=\psii[C(\bm{t})]$ is given by %
\begin{align*}
  K_{C_{\bm{t}}}(u)&=\sum_{k=0}^{d-1}(\psii[\psi(h)u] - h)^k\frac{(-1)^k\psi^{(k)}(\psii[\psi(h)u])}{k!\psi(h)}\\ %
  &=\sum_{k=0}^{d-1}(\psii[C(\bm{t})u] - \psii[C(\bm{t})])^k\frac{(-1)^k\psi^{(k)}(\psii[C(\bm{t})u])}{k!C(\bm{t})},\quad u\in[0,1];
\end{align*}%
the formulas of \cite{barbegenestghoudiremillard1996} and \cite{mcneilneslehova2009} are obtained for $\bm{t}=\bm{1}$.
Note that the appearing generator derivatives are known for several well-known Archimedean families, see \cite{hofertmaechlermcneil2012},
and so $K_{C_{\bm{t}}}$ can be computed for such families.

\subsection{Examples}
Let us now turn to specific examples of right-truncated Archimedean copulas.
\begin{example}[Right-truncated Clayton copulas]\label{ex:clayton}
  The generator $\psi(t)=(1+t)^{-1/\theta}$, $\theta>0$, %
  of a Clayton copula has inverse $\psii[u]=u^{-\theta}-1$. We thus obtain that
  \begin{align*}
    \tpsi(t)&=\frac{\psi(t+\psii[C(\bm{t})])}{C(\bm{t})}=\frac{(1+t+\psii[C(\bm{t})])^{-1/\theta}}{C(\bm{t})}=\frac{(t+C(\bm{t})^{-\theta})^{-1/\theta}}{C(\bm{t})}\\
    &=(1+t/C(\bm{t})^{-\theta})^{-1/\theta}=\psi(t/C(\bm{t})^{-\theta}).
  \end{align*}
  Therefore, $\tpsi$ is of the form $\psi(ct)$ for some $c>0$ and thus generates
  the same Archimedean copula as $\psi$, namely a Clayton copula with parameter
  $\theta$. In other words, right-truncated Clayton copulas are again Clayton
  copulas with the same parameter. It is then no surprise that the coefficients
  of tail dependence of $C_{\bm{t}}$ are those of the Clayton copula $C$ in this
  case, which we can also easily verify from Proposition~\ref{prop:td:rtrunc:AC}
  since $\psi\in\RV_{-1/\theta}$ and $\psi$ is differentiable.
\end{example}

\begin{remark}\label{rem:limit:clayton:not:adequate}
  The fact that right-truncated Clayton copulas are again Clayton copulas (even
  with the same parameter) is one reason for the popularity of Clayton copulas in
  insurance applications, although, as we showed in Theorem~\ref{thm:rtrunc:AC},
  all right-truncated Archimedean copulas are tilted Archimedean and thus enjoy
  the corresponding tractability. Furthermore, all generators in the remaining
  examples below are not regularly varying in the sense of
  Remark~\ref{rem:main:AC}~\ref{lim:AC:t:to:0} and thus a limiting Clayton model
  is neither adequate, nor necessary to consider. For example, the following two
  examples will show that both right-truncated Ali--Mikhail--Haq and
  right-truncated Frank copulas are again Ali--Mikhail--Haq and Frank copulas,
  respectively (albeit with different parameters) and so one can work with an
  exact (instead of a limiting) model in these cases. Also right-truncated
  Gumbel and Joe copulas are tractable as we will see.
\end{remark}

\begin{example}[Right-truncated Ali--Mikhail--Haq copulas] %
  The generator $\psi(t)=(1-\theta)/(\exp(t)-\theta)$, $\theta\in[0,1)$, of an
  Ali--Mikhail--Haq copula has tilted generator
  $\tpsi(t)=\psi(t+h)/\psi(h)=\frac{1-e^{-h}\theta}{\exp(t)-e^{-h}\theta}$,
  $h\ge0$, which is of the same form as $\psi$ if $\theta$ is replaced by
  $e^{-h}\theta$. So unlike in the Clayton case, right-truncated
  Ali--Mikhail--Haq copulas are not the same Ali--Mikhail--Haq copulas, however,
  they are Ali-Mikhail-Haq copulas again, with parameter tilted by $e^{-h}$. The
  frailty distribution for Ali--Mikhail--Haq copulas is the geometric
  distribution $\Geo(1-\theta)$ on $\IN$. The frailty distribution for
  right-truncated Ali--Mikhail--Haq copulas is thus $\Geo(1-e^{-h}\theta)$ on
  $\IN$, which could have also been derived from~\eqref{eq:exp:tilted:dens:pmf};
  we demonstrate this in the next example for Frank copulas. Note
  that for $h=\psii[C(\bm{t})]$,
  $e^{-h}=e^{-\psii[C(\bm{t})]}=\frac{C(\bm{t})}{1-\theta(1-C(\bm{t}))}$, so
  that right-truncated Ali--Mikhail--Haq copulas with truncation point $\bm{t}$
  have frailty distribution
  $\Geo\bigl(\frac{1-\theta}{1-\theta(1-C(\bm{t}))}\bigr)$ on $\IN$ and thus can
  easily be sampled.
\end{example}

\begin{example}[Right-truncated Frank copulas]
  For $p=1-e^{-\theta}$, the generator $\psi(t)=-\log(1-p\exp(-t))/\theta$, %
  $\theta\in(0,\infty)$, of a Frank copula has corresponding logarithmic frailty
  distribution $\Log(p)$ with probability mass function $p_k=p^k/(-\log(1-p)k)$
  at $k\in\IN$; for a sampling algorithm, see the algorithm ``LK'' of \cite{kemp1981}. Using that
  $e^{-h}=e^{-\psii[C(\bm{t})]}=(1-e^{-\theta C(\bm{t})})/p$, it follows
  from~\eqref{eq:exp:tilted:dens:pmf} that the frailty distribution of a
  right-truncated Frank copula has probability mass function
  \begin{align*}
    \tilde{p}_k=\frac{(pe^{-h})^k}{-k\log(1-p)C(\bm{t})}=\frac{(1-e^{-\theta C(\bm{t})})^k}{k\theta C(\bm{t})}=\frac{\tilde{p}^k}{-k\log(1-\tilde{p})}\quad\text{for}\ \tilde{p}=1-e^{-\theta C(\bm{t})}.
  \end{align*}
  This is the probability mass function of a $\Log(\tilde{p})$
  distribution. We see that right-truncated Frank copulas are again Frank copulas, with
  parameter $\theta$ replaced by $\theta C(\bm{t})$, and thus can easily be sampled.
\end{example}

\begin{example}[Right-truncated Gumbel copulas]
  The generator $\psi(t)=\exp(-t^{1/\theta})$, $\theta\ge1$, %
  of a Gumbel copula
  has inverse $\psii[u]=(-\log u)^\theta$. With $h=\psii[C(\bm{t})]$, we obtain that
  \begin{align*}
    \tpsi(t)&=\frac{\psi(t+\psii[C(\bm{t})])}{C(\bm{t})}=\frac{\psi(t+h)}{\psi(h)}=\frac{\exp(-(t+h)^{1/\theta})}{\exp(-h^{1/\theta})} = \exp(-((t+h)^{1/\theta}-h^{1/\theta})),
  \end{align*}
  which is the Laplace--Stieltjes transform of the exponentially tilted stable distribution
  $\tS(1/\theta,1,\cos^\theta(\pi/(2\theta)), \I_{\{\theta=1\}},h\I_{\{\theta\neq 1\}};1)$; see
  \cite[Section~4.2.2]{hofert2010c}, \cite{hofert2011a}, and the \R\ package \texttt{copula} for more details
  about this distribution. In particular, this distribution and thus the corresponding
  right-truncated Gumbel copulas can be easily be sampled.

  Gumbel copulas $C$ are the only Archimedean extreme value copulas, so this
  example also shows that right-truncated Gumbel copulas $C_{\bm{t}}$ cannot be
  extreme value (unless $\bm{t}=\bm{1}$); this is in line with
  Proposition~\ref{prop:scaling:EVCs}. Also, by Lemma~\ref{lem:td:rtrunc:AC} %
  and Proposition~\ref{prop:td:rtrunc:AC}~\ref{prop:td:rtrunc:AC:2},
  $C_{\bm{t}}$ has no lower or upper tail dependence.
\end{example}

\begin{example}[Right-truncated Joe copulas]
  The generator $\psi(t)=1-(1-\exp(-t))^{1/\theta}$, $\theta\in[1,\infty)$, %
  of a Joe copula
  has a Sibuya ($\Sib(1/\theta)$) frailty distribution with probability mass function $p_k=\binom{1/\theta}{k}(-1)^{k-1}$ at $k\in\IN$.
  An efficient sampling algorithm for $\Sib(1/\theta)$ distributions was introduced in \cite[Proposition~3.2]{hofert2011a}; use $\theta_0=1$ there.
  By~\eqref{eq:exp:tilted:dens:pmf}, the probability mass function of the frailty distribution of the corresponding right-truncated
  Joe copula is an exponentially tilted $\Sib(1/\theta)$ distribution
  with probability mass function %
  $\tilde{p}_k=\binom{1/\theta}{k}(-1)^{k-1}(1-(1-C(\bm{t}))^\theta)^k/C(\bm{t})=p_k\frac{(1-(1-C(\bm{t}))^\theta)^k}{C(\bm{t})}$ at $k\in\IN$, %
  which is not a Sibuya distribution anymore, so right-truncated Joe copulas are Archimedean but not Joe copulas anymore.
  However, an efficient sampling algorithm for the frailty distribution corresponding to $(\tilde{p}_k)_{k\in\IN}$ can be given; see the following algorithm
  (choose $\alpha=1/\theta$ and $p=1-(1-C(\bm{t}))^\theta$ there). %
\end{example}

\begin{algorithm}[Sampling exponentially tilted Sibuya distributions]
  Let $\alpha\in(0,1]$ and $p_k^{\Sib(\alpha)}=\binom{\alpha}{k}(-1)^{k-1}$,
  $k\in\IN$, be the probability mass function of a $\Sib(\alpha)$ distribution with
  Laplace--Stieltjes transform
  $\psi(t)=1-(1-\exp(-t))^{\alpha}$. %
  Let $h>0$, $p=e^{-h}\in(0,1)$ and let
  $p_k^{\Log(p)}=p^k/(-\log(1-p)k)$, $k\in\IN$, be the
  probability mass function of $\Log(p)$.
  Furthermore, let $\tilde{F}$ be the distribution function with exponentially tilted $\Sib(\alpha)$ probability mass function
  \begin{align*}
    \tilde{p}_k=\frac{(e^{-h})^k}{\psi(h)}p_k^{\Sib(\alpha)}=\frac{p^k}{1-(1-p)^\alpha}p_k^{\Sib(\alpha)},\quad k\in\IN,
  \end{align*}
  and %
  Laplace--Stieltjes transform
  $\tpsi(t)=\psi(t+h)/\psi(h)=\psi(t-\log(p))/\psi(-\log(p))$. Then $\tilde{V}\sim\tilde{F}$ can be sampled as follows.
  \begin{enumerate}
  \item If $p\le -\log((1-p)^\alpha)$, independently sample $V\sim\Sib(\alpha)$, $U\sim\U(0,1)$ until $U\le p^{V-1}$ and then return $\tilde{V}=V$.
  \item And if $p> -\log((1-p)^\alpha)$, independently sample $V\sim\Log(p)$,
    $U\sim\U(0,1)$ until $Vp_V^{\Sib(\alpha)}/\alpha$ (see the proof for
    equivalent conditions) and then return $\tilde{V}=V$.
  \end{enumerate}
  The rejection constant of this algorithm overall is bounded above by $1/(1-1/e)\approx 1.5820$.
\end{algorithm}
\begin{proof}
  On the one hand,
  \begin{align*}
    \tilde{p}_k=\frac{p^k}{1-(1-p)^\alpha}p_k^{\Sib(\alpha)}\le \frac{p}{1-(1-p)^\alpha}p_k^{\Sib(\alpha)},\quad k\in\IN, %
  \end{align*}
  which implies that we can sample $\tilde{F}$ by rejection from a $\Sib(\alpha)$ distribution with rejection constant $c^{\Sib(\alpha)}=\frac{p}{1-(1-p)^\alpha}\ge 1$
  (decreasing as a function of $p$, %
  $1/\alpha$ for $p=0$, %
  $1$ for $p=1$) and acceptance condition $Uc^{\Sib(\alpha)}p_V^{\Sib(\alpha)}\le \tilde{p}_V$, equivalent to $U\le p^{V-1}$.
  On the other hand, since
  \begin{align*}
    kp_k^{\Sib(\alpha)}=k\binom{\alpha}{k}(-1)^{k-1}=k\alpha\frac{\prod_{j=1}^{k-1}(j-\alpha)}{k!}=\alpha\prod_{j=1}^{k-1}(1-\alpha/j)\le\alpha,\quad k\in\IN,
  \end{align*}
  we also have that
  \begin{align*}
    \tilde{p}_k&=\frac{p^k}{1-(1-p)^\alpha}p_k^{\Sib(\alpha)}=\frac{p^k}{(1-(1-p)^\alpha)k}kp_k^{\Sib(\alpha)}=\frac{-\log(1-p)}{1-(1-p)^\alpha} p_k^{\Log(p)} k p_k^{\Sib(\alpha)}\\
    &\le\frac{-\log((1-p)^\alpha)}{1-(1-p)^\alpha}p_k^{\Log(p)},\quad k\in\IN, %
  \end{align*}
  and so we can also sample $\tilde{F}$ by rejection from a $\Log(p)$ distribution with rejection constant $c^{\Log(p)}=\frac{-\log((1-p)^\alpha)}{1-(1-p)^\alpha}\ge 1$
  (maximal value $1/(1-1/e)$ is attained for $p=1-e^{-1/\alpha}$) and acceptance condition $Uc^{\Log(p)}p_V^{\Log(p)}\le \tilde{p}_V$, %
  equivalent to $U\le Vp_V^{\Sib(\alpha)}/\alpha=\binom{\alpha-1}{V-1}(-1)^{V-1}=\binom{V-1-\alpha}{V-1}=\prod_{j=1}^{V-1}(1-\alpha/j)$.
  Finally, note that $c^{\Sib(\alpha)}\le c^{\Log(p)}$ if and only if $p\le -\log((1-p)^\alpha)$ in which case
  also $c^{\Sib(\alpha)}$ is bounded above by $1/(1-1/e)$.
\end{proof}

\section{Right-truncated copulas related to Archimedean copulas}
In this section we cover the right-truncated copulas of outer power Archimedean
copulas, Archimax copulas with logistic stable tail dependence function and
nested Archimedean copulas.

\subsection{Right-truncated outer power Archimedean copulas}\label{sec:rtrunc:opAC}
If $\psi\in\Psi_{\infty}$, then $\opsi(t)=\psi(t^\alpha)\in\Psi_{\infty}$ for
all $\alpha\in(0,1]$.  The Archimedean copulas generated by $\opsi$ are known
as \emph{outer power Archimedean copulas}; see
\cite[Section~4.5.1]{nelsen2006} and \cite[Section~2.4]{hofert2010c}. Outer
power generators $\opsi$ have become popular in applications due to the added
flexibility through the additional parameter $\alpha$, which can be beneficial
for modeling purposes; see \cite{hofertscherer2011} or
\cite{goreckihofertokhrin2020}. If the \emph{base generator} $\psi$ is from
Clayton's family, the resulting outer power Clayton copula can reach any
lower and upper tail dependence of interest.

By Theorem~\ref{thm:rtrunc:AC}, the right-truncated copula $C_{\bm{t}}$ of
an outer power Archimedean copula with generator $\opsi(t)=\psi(t^\alpha)$, $\alpha\in(0,1]$,
is (tilted outer power) Archimedean with generator
\begin{align*}
  \tpsi(t)=\frac{\opsi(t+\opsii[\mathring{C}(\bm{t})])}{\mathring{C}(\bm{t})},
\end{align*}
where $\mathring{C}$ is the outer power Archimedean copula generated by $\opsi$.
If $F=\LSi[\psi]$ denotes the frailty distribution corresponding to $\psi$
and $V\sim F$, then $\mathring{F}=\LSi[\opsi]$ has stochastic representation
\begin{align*}
  \mathring{V}=SV^{1/\alpha}, %
\end{align*}
where $S\sim\S(\alpha,1,\cos^{1/\alpha}(\alpha\pi/2),\I_{\{\alpha=1\}};1)$ denotes the stable
distribution with Laplace--Stieltjes transform $\psi_{S}(t)=\exp(-t^\alpha)$; %
see \cite[Theorem~4.2.6]{hofert2010c}. %

Under the assumptions stated in Remark~\ref{rem:main:AC}~\ref{lim:AC:t:to:0}, it
is also straightforward to verify that $\lim_{\bm{t}\downarrow\bm{0}}C_{\bm{t}}$
for a right-truncated outer power Archimedean copula with parameter
$\alpha\in(0,1]$ is a Clayton copula with parameter $\theta/\alpha$.

\subsection{Right-truncated Archimax copulas with logistic stable tail dependence function}\label{sec:rtrunc:AXC:log:stdf}
\emph{Archimax copulas}, see \cite{caperaafougeresgenest2000} and
\cite{charpentierfougeresgenestneslehova2014}, are copulas of the form
\begin{align*}
  C(\bm{u})=\psi(\ell(\psii[u_1],\dots,\psii[u_d])),\quad\bm{u}\in[0,1]^d,
\end{align*}
where $\psi\in\Psi_d$ and $\ell:[0,\infty)^d\to[0,\infty)$ is a stable tail
dependence function; see \cite{ressel2013} %
and \cite{charpentierfougeresgenestneslehova2014} for a characterization of
stable tail dependence
functions. %
A popular family of stable tail dependence functions are \emph{logistic stable
  tail dependence functions}
$\ell(\bm{x})=\ell_\alpha(\bm{x})=\bigl(\sum_{j=1}^d
x_j^{1/\alpha}\bigr)^{\alpha}$, $\bm{x}\in[0,\infty)^d$, $\alpha\in(0,1]$; logistic stable
tail dependence functions are the stable tail dependence functions of \emph{logistic copulas}
(extreme value copulas) also known as Gumbel copulas we already encountered.

The class of outer power Archimedean copulas is equivalent to the class of
Archimax copulas with logistic stable tail dependence functions. By the above
considerations we thus also know that right-truncated Archimax copulas with
logistic stable tail dependence functions are tilted outer power Archimedean and
thus Archimedean copulas again.

For Archimax copulas with other stable tail dependence functions, the
corresponding right-truncated copulas may not be available analytically anymore
as stable tail dependence functions are typically not componentwise analytically
invertible unless in the (hierarchical) logistic case; see
Example~\ref{ex:NAC:special:cases}~\ref{ex:NAC:special:cases:hier:logistic:stdf} below
for the hierarchical case.

\subsection{Right-truncated nested Archimedean copulas}
In this section we consider \emph{nested Archimedean copulas} of the form
\begin{align}
  C(\bm{u})=C_0(C_1(\bm{u}_1),\dots,C_S(\bm{u}_S))=\psi_0\biggl(\,\sum_{s=1}^S\psiis{0}\biggl[\psi_s\biggl(\,\sum_{j=1}^{d_s}\psiis{s}[u_{sj}]\biggr)\biggr]\biggr),\label{eq:NAC}
\end{align}
where $\bm{u}=(\bm{u}_1,\dots,\bm{u}_S)\in[0,1]^d$ with
$\bm{u}_s=(u_{s1},\dots,u_{sd_s})$, $s\in\{1,\dots,S\}$, and $C_0,C_1,\dots,C_S$
are Archimedean copulas of dimensions $S,d_1,\dots,d_S$ (such that $\sum_{s=1}^Sd_s=d$) with generators
$\psi_0,\psi_1,\dots,\psi_S\in\Psi_{\infty}$, respectively. We assume the
sufficient nesting condition of \cite{mcneil2008} to hold
($(\psiis{0}\circ\psi_s)'$ being completely monotone for all $s=1,\dots,S$), so
that $C$ in \eqref{eq:NAC} is guaranteed to be a proper copula.
Furthermore, for ease of notation, let
\begin{align*}
  \bm{t}=(\bm{t}_1,\dots,\bm{t}_S)=(t_{11},\dots,t_{1d_1},\dots,t_{S1},\dots,t_{Sd_S})
\end{align*}
and, similarly to \eqref{comp:map},
\begin{align*}
  \bm{x}_s\mapsto C(\bm{x}_s;\bm{t}_{-s})=C(\bm{t}_1,\dots,\bm{t}_{s-1},\bm{x}_s,\bm{t}_{s+1},\dots,\bm{t}_S).
\end{align*}
The following result provides the right-truncated copulas of nested Archimedean copulas of form~\eqref{eq:NAC}.
\begin{theorem}[Right-truncated nested Archimedean copulas]\label{thm:right:rtrunc:NAC}
  Let $C$ be a $d$-dimensional nested Archimedean copula of form~\eqref{eq:NAC}.
  For $\bm{t}\in(0,1]^d$ such that $C(\bm{t})>0$, the right-truncated copula at $\bm{t}$, that is $C_{\bm{t}}(\bm{u})$, is given by
  \begin{align}
    \frac{\psi_0\bigl(\,\sum_{s=1}^S\psiis{0}\bigl[\psi_s\bigl(\,\sum_{j=1}^{d_s}
      \psiis{s}\bigl[\psi_0\bigl(\psiis{0}[C(\bm{t})u_{sj}] - \psiis{0}[C(\bm{1};\bm{t}_{-s})]\bigr)\bigr] - (d_s-1)\psiis{s}[C_s(\bm{t}_s)]
      \bigr)\bigr]\bigr)}{C(\bm{t})},\label{eq:right:trunc:NAC}
  \end{align}
  where $\psiis{0}[C(\bm{1};\bm{t}_{-s})]=\psiis{0}[C(\bm{t})]-\psiis{0}[C_{s}(\bm{t}_{s})]$.
\end{theorem}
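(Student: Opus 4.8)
The plan is to apply Proposition~\ref{prop:rtrunc:C}, so that the entire task reduces to computing, for each block index $s$ and inner index $j$, the componentwise generalized inverse $C^{-1}[C(\bm{t})u_{sj};\bm{t}_{-sj}]$ of the nested Archimedean copula~\eqref{eq:NAC}. The key observation is that when all arguments except $x_{sj}$ are frozen at $\bm{t}$, only the inner copula $C_s$ depends on $x_{sj}$, while every other inner copula contributes the constant $C_{\tilde{s}}(\bm{t}_{\tilde{s}})$. Writing $C_s(x_{sj};\bm{t}_{s(-j)})$ for $C_s$ with its $j$th argument replaced by $x_{sj}$ (as in Example~\ref{ex:indep:dep:blocks:max}), this gives
\begin{align*}
  C(x_{sj};\bm{t}_{-sj})=\psi_0\Bigl(\psiis{0}\bigl[C_s(x_{sj};\bm{t}_{s(-j)})\bigr]+\sum_{\tilde{s}\neq s}\psiis{0}[C_{\tilde{s}}(\bm{t}_{\tilde{s}})]\Bigr).
\end{align*}
Since $\psiis{0}[C(\bm{t})]=\sum_{\tilde{s}=1}^S\psiis{0}[C_{\tilde{s}}(\bm{t}_{\tilde{s}})]$ and $\psiis{0}[1]=0$ (so that the $s$th inner copula drops out when set to $\bm{1}$), the constant appearing above is precisely $\sum_{\tilde{s}\neq s}\psiis{0}[C_{\tilde{s}}(\bm{t}_{\tilde{s}})]=\psiis{0}[C(\bm{t})]-\psiis{0}[C_s(\bm{t}_s)]=\psiis{0}[C(\bm{1};\bm{t}_{-s})]$, which already establishes the auxiliary identity stated after~\eqref{eq:right:trunc:NAC}.

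Next I would invert the above in two stages, using that each $\psi_s\in\Psi_\infty$ is continuous and strictly decreasing with range $(0,1]$, so the generalized inverses are genuine inverses on the relevant domains. Applying $\psiis{0}$ first peels off the outer generator, and then solving $C_s(x_{sj};\bm{t}_{s(-j)})=\psi_0(\psiis{0}[y_{sj}]-\psiis{0}[C(\bm{1};\bm{t}_{-s})])$ for $x_{sj}$ by applying $\psiis{s}$ yields
\begin{align*}
  C^{-1}[y_{sj};\bm{t}_{-sj}]=\psi_s\Bigl(\psiis{s}\bigl[\psi_0(\psiis{0}[y_{sj}]-\psiis{0}[C(\bm{1};\bm{t}_{-s})])\bigr]-\sum_{j'\neq j}\psiis{s}[t_{sj'}]\Bigr).
\end{align*}
Substituting $y_{sj}=C(\bm{t})u_{sj}$ and feeding the resulting arguments back into $C(\{\,\cdot\,\}_{sj})$ from~\eqref{eq:NAC}, the inner contribution of block $s$ becomes $\sum_{j=1}^{d_s}\psiis{s}[x_{sj}]$, in which the $\sum_{j'\neq j}\psiis{s}[t_{sj'}]$ terms collapse by exactly the symmetric-summation argument of Step~\eqref{eq:AC:exch}: summing over $j$ turns $\sum_{j=1}^{d_s}\sum_{j'\neq j}\psiis{s}[t_{sj'}]$ into $(d_s-1)\sum_{j'=1}^{d_s}\psiis{s}[t_{sj'}]=(d_s-1)\psiis{s}[C_s(\bm{t}_s)]$. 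Dividing by $C(\bm{t})$ then reproduces~\eqref{eq:right:trunc:NAC} verbatim.

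The main obstacle is bookkeeping rather than conceptual: one must keep careful track of which generator ($\psi_0$ versus $\psi_s$) is inverted at each level, and handle two distinct collapsing identities — the outer one, $\sum_{\tilde{s}\neq s}\psiis{0}[C_{\tilde{s}}(\bm{t}_{\tilde{s}})]=\psiis{0}[C(\bm{t})]-\psiis{0}[C_s(\bm{t}_s)]$, which is specific to the block $s$ being inverted, and the inner one, which mirrors the exchangeability collapse already used for flat Archimedean copulas. A minor point to address is well-definedness: the sufficient nesting condition of \cite{mcneil2008} guarantees that $C$ in~\eqref{eq:NAC} is a proper copula, and because all generators lie in $\Psi_\infty$ (hence are strictly monotone and continuous, with $C(\bm{t})>0$ keeping every argument in the positive range), no genuine generalized-inverse subtleties arise and the formal manipulations above are fully justified.
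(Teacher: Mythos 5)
Your proposal is correct and follows essentially the same route as the paper's proof: compute $C(x_{sj};\bm{t}_{-sj})$, invert it in two stages through $\psiis{0}$ and $\psiis{s}$, substitute $y_{sj}=C(\bm{t})u_{sj}$ into Proposition~\ref{prop:rtrunc:C}, and collapse the double sum $\sum_{j=1}^{d_s}\sum_{\tilde{j}\neq j}\psiis{s}[t_{s\tilde{j}}]$ into $(d_s-1)\psiis{s}[C_s(\bm{t}_s)]$, together with the identity $\psiis{0}[C(\bm{1};\bm{t}_{-s})]=\psiis{0}[C(\bm{t})]-\psiis{0}[C_s(\bm{t}_s)]$. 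The only differences are presentational (you establish the auxiliary identity up front and add an explicit well-definedness remark, which the paper leaves implicit).
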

\begin{proof}
  We have that
  \begin{align*}
    C(x_{sj};\bm{t}_{-sj})=\psi_0\biggl(\,\sum_{\tb{\tilde{s}=1}{\tilde{s}\neq s}}^S\psiis{0}[C_{\tilde{s}}(\bm{t}_{\tilde{s}})]+\psiis{0}\biggl[\psi_s\biggl(\,\sum_{\tb{\tilde{j}=1}{\tilde{j}\neq j}}^{d_s}\psiis{s}[t_{s\tilde{j}}] + \psiis{s}[x_{sj}]\biggr)\biggr]\biggr)
  \end{align*}
  and thus
  \begin{align*}
    C^{-1}[y_{sj};\bm{t}_{-sj}]&=\psi_s\biggl(\psiis{s}\biggl[\psi_0\biggl(\psiis{0}[y_{sj}] - \sum_{\tb{\tilde{s}=1}{\tilde{s}\neq s}}^S\psiis{0}[C_{\tilde{s}}(\bm{t}_{\tilde{s}})]\biggr)\biggr] - \sum_{\tb{\tilde{j}=1}{\tilde{j}\neq j}}^{d_s}\psiis{s}[t_{s\tilde{j}}]\biggr)\\
    &=\psi_s\biggl(\psiis{s}\biggl[\psi_0\biggl(\psiis{0}[y_{sj}] - \psiis{0}[C(\bm{1};\bm{t}_{-s})]\biggr)\biggr] - \sum_{\tb{\tilde{j}=1}{\tilde{j}\neq j}}^{d_s}\psiis{s}[t_{s\tilde{j}}]\biggr),
  \end{align*}
  where we used that $\sum_{\tb{\tilde{s}=1}{\tilde{s}\neq s}}^S\psiis{0}[C_{\tilde{s}}(\bm{t}_{\tilde{s}})]=\psiis{0}[C(\bm{1};\bm{t}_{-s})]$.
  Using $y_{sj}=C(\bm{t})u_{sj}$, $s=1,\dots,S$, $j=1,\dots,d_s$, we obtain from Proposition~\ref{prop:rtrunc:C} that $C_{\bm{t}}(\bm{u})$ equals
  \begin{align*}
    &\phantom{{}={}}\frac{C(\{C^{-1}[C(\bm{t})u_{sj};\bm{t}_{-sj}]\}_{s,j})}{C(\bm{t})}\\
                      &=\frac{\psi_0\bigl(\,\sum_{s=1}^S\psiis{0}\bigl[\psi_s\bigl(\,\sum_{j=1}^{d_s}\bigl(
                        \psiis{s}\bigl[\psi_0\bigl(\psiis{0}[C(\bm{t})u_{sj}] - \psiis{0}[C_{-s}(\bm{t}_{-s})]\bigr)\bigr] - \sum_{\tb{\tilde{j}=1}{\tilde{j}\neq j}}^{d_s}\psiis{s}[t_{s\tilde{j}}]
                        \bigr)\bigr)\bigr]\bigr)}{C(\bm{t})}. %
  \end{align*}
  Expanding the sum over $j$ and using the fact that
  \begin{align*}
    \sum_{j=1}^{d_s}\sum_{\tb{\tilde{j}=1}{\tilde{j}\neq j}}^{d_s}\psiis{s}[t_{s\tilde{j}}]=\sum_{j=1}^{d_s}(d_s-1)\psiis{s}[t_{sj}]=(d_s-1)\psiis{s}[C_s(\bm{t}_s)],
  \end{align*}
  we obtain the form of $C_{\bm{t}}$ as claimed. To see the representation for $\psiis{0}[C(\bm{1};\bm{t}_{-s})]$, note that
  $\psiis{0}[C(\bm{1};\bm{t}_{-s})]=\sum_{\tb{\tilde{s}=1}{\tilde{s}\neq s}}^S\psiis{0}[C_{\tilde{s}}(\bm{t}_s)]=\sum_{\tilde{s}=1}^S\psiis{0}[C_{\tilde{s}}(\bm{t}_{\tilde{s}})] - \psiis{0}[C_{s}(\bm{t}_s)]=\psiis{0}[C(\bm{t})]-\psiis{0}[C_{s}(\bm{t}_s)]$.
\end{proof}
It is readily checked that $C_{\bm{t}}$ in~\eqref{eq:right:trunc:NAC} is
grounded. To verify that it has standard uniform univariate margins, consider the $\tilde{s}\tilde{j}$th margin and
let $u_{sj}=1$ for all $s\neq\tilde{s}$ or $j\neq\tilde{j}$. This implies that
\begin{align}
  &\phantom{{}={}}\psi_0\bigl(\psiis{0}[C(\bm{t})u_{sj}] - \psiis{0}[C(\bm{1};\bm{t}_{-s})]\bigr)\notag\\
  &=\psi_0\bigl(\psiis{0}[C(\bm{t})u_{sj}] - \psiis{0}[C(\bm{t})] + \psiis{0}[C_{s}(\bm{t}_{s})]\bigr)\notag\\
  &=\begin{cases}
    \psi_0\bigl(\psiis{0}[C(\bm{t})u_{\tilde{s}\tilde{j}}] - \psiis{0}[C(\bm{t})] + \psiis{0}[C_{\tilde{s}}(\bm{t}_{\tilde{s}})]\bigr),&s=\tilde{s}\ \text{and}\ j=\tilde{j},\\
    C_{s}(\bm{t}_{s}),&s\neq \tilde{s}\ \text{or}\ j\neq \tilde{j}. %
  \end{cases}\label{eq:NAC:simplification}
\end{align}
It then follows from~\eqref{eq:right:trunc:NAC} that
{\allowdisplaybreaks
\begin{align*}
   &\phantom{{}={}}C_{\bm{t}}(\bm{1},u_{\tilde{s}\tilde{j}},\bm{1})C(\bm{t})\\
   &=\psi_0\biggl(\,\sum_{s=1}^S\psiis{0}\biggl[\psi_s\biggl(\,\sum_{j=1}^{d_s}
     \psiis{s}\bigl[\psi_0\bigl(\psiis{0}[C(\bm{t})u_{sj}] - \psiis{0}[C(\bm{t})] + \psiis{0}[C_s(\bm{t}_s)]\bigr)\bigr]\\
     &\hphantom{=\psi_0\biggl(\,\sum_{s=1}^S\psiis{0}\biggl[\psi_s\biggl(\,}
     - (d_s-1)\psiis{s}[C_s(\bm{t}_s)]\biggr)\biggr]\biggr)\\
   &=\psi_0\biggl(\,\sum_{\tb{s=1}{s\neq\tilde{s}}}^S\psiis{0}[C_s(\bm{t}_s)]
     + \psiis{0}\bigl[\psi_{\tilde{s}}\bigl((d_{\tilde{s}}-1)\psiis{{\tilde{s}}}[C_{\tilde{s}}(\bm{t}_{\tilde{s}})]\\ %
     &\hphantom{=\psi_0\biggl(\,\sum_{\tb{s=1}{s\neq\tilde{s}}}^S\psiis{0}[C_s(\bm{t}_s)]+ \psiis{0}\bigl[\psi_{\tilde{s}}\bigl(}
       + \psiis{{\tilde{s}}}\bigl[\psi_0\bigl(\psiis{0}[C(\bm{t})u_{{\tilde{s}}{\tilde{j}}}] - \psiis{0}[C(\bm{t})] + \psiis{0}[C_{\tilde{s}}(\bm{t}_{\tilde{s}})]\bigr)\bigr]\\ %
     &\hphantom{=\psi_0\biggl(\,\sum_{\tb{s=1}{s\neq\tilde{s}}}^S\psiis{0}[C_s(\bm{t}_s)]+ \psiis{0}\bigl[\psi_{\tilde{s}}\bigl(}
     - (d_{\tilde{s}}-1)\psiis{\tilde{s}}[C_{\tilde{s}}(\bm{t}_{\tilde{s}})]
     \bigr)\bigr]\biggr)\\ %
   &=\psi_0\biggl(\,\sum_{\tb{s=1}{s\neq\tilde{s}}}^S\psiis{0}[C_s(\bm{t}_s)]%
     + \psiis{0}\bigl[\psi_{\tilde{s}}\bigl(
     \psiis{{\tilde{s}}}\bigl[\psi_0\bigl(\psiis{0}[C(\bm{t})u_{{\tilde{s}}{\tilde{j}}}] - \psiis{0}[C(\bm{t})] + \psiis{0}[C_{\tilde{s}}(\bm{t}_{\tilde{s}})]\bigr)\bigr]
     \bigr)\bigr]\biggr)\\
   &=\psi_0\biggl(\,\sum_{\tb{s=1}{s\neq\tilde{s}}}^S\psiis{0}[C_s(\bm{t}_s)] %
     + \psiis{0}[C(\bm{t})u_{{\tilde{s}}{\tilde{j}}}] - \psiis{0}[C(\bm{t})] + \psiis{0}[C_{\tilde{s}}(\bm{t}_{\tilde{s}})]
     \biggr)\\
   &=\psi_0\biggl(\,\sum_{s=1}^S\psiis{0}[C_s(\bm{t}_s)]
     + \psiis{0}[C(\bm{t})u_{{\tilde{s}}{\tilde{j}}}] - \psiis{0}[C(\bm{t})]
     \biggr)\\
   &=\psi_0\bigl(\psiis{0}[C(\bm{t})]
     + \psiis{0}[C(\bm{t})u_{{\tilde{s}}{\tilde{j}}}] - \psiis{0}[C(\bm{t})]
     \bigr)=\psi_0\bigl(\psiis{0}[C(\bm{t})u_{{\tilde{s}}{\tilde{j}}}]\bigr)=C(\bm{t})u_{\tilde{s}\tilde{j}},
\end{align*}}%
from which we correctly obtain that $C_{\bm{t}}(\bm{1},u_{\tilde{s}\tilde{j}},\bm{1})=u_{\tilde{s}\tilde{j}}$.

Based on~\eqref{eq:NAC:simplification}, a similar but more tedious calculation
can be done to derive the bivariate margins of~\eqref{eq:right:trunc:NAC}.
\begin{corollary}[Bivariate margins of a right-truncated nested Archimedean copula]\label{cor:biv:margins:NAC}
  The copula $C_{\bm{t}}$ in~\eqref{eq:right:trunc:NAC} has bivariate margin $C_{\bm{t}}(\bm{1},u_{s_1j_1},\bm{1},u_{s_2j_2},\bm{1})$ given by
  \begin{align*}
    \begin{cases}
      \frac{\psi_0\bigl(\tilde{h}+\psiis{0}\bigl[\psi_s\bigl(\,\sum_{k=1}^2\psiis{s}\bigl[\psi_0\bigl(\psiis{0}[C(\bm{t})u_{sj_k}] - \tilde{h}\bigr)\bigr] - \psiis{s}[C_s(\bm{t}_s)]   \bigr)\bigr] \bigr)}{C(\bm{t})},& s_1=s_2=s,\\ %
      \tpsi_0(\tpsiis{0}[u_{s_1j_1}]+\tpsiis{0}[u_{s_2j_2}]), & s_1\neq s_2,
    \end{cases}
  \end{align*}
  where $\tilde{h}=\psiis{0}[C(\bm{t})]-\psiis{0}[C_s(\bm{t}_s)]$ %
  if $s_1=s_2=s$ and where
  $\tpsi_0(t)=\psi_0(t+h)/\psi_0(h)$ with $h=\psiis{0}[C(\bm{t})]$ if $s_1\neq s_2$.
  In particular, bivariate margins of right-truncated nested Archimedean copulas
  are tilted Archimedean copulas if the corresponding indices belong to different sectors.
  And the fact that this does not hold in general if they belong to the same sectors
  implies that a pairwise margin of a truncated nested Archimedean copula not necessarily
  equals the truncated corresponding pairwise margin of a nested Archimedean copula.
\end{corollary}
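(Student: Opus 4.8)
The plan is to obtain the bivariate margin directly from~\eqref{eq:right:trunc:NAC} by setting $u_{sj}=1$ for every coordinate except the two distinguished ones $(s_1,j_1)$ and $(s_2,j_2)$, and then to simplify the nested expression one sector at a time, exactly as in the verification of standard uniform margins above but now keeping two free arguments. The engine of the whole computation is~\eqref{eq:NAC:simplification}: whenever $u_{sj}=1$, the innermost block $\psi_0(\psiis{0}[C(\bm{t})u_{sj}]-\psiis{0}[C(\bm{1};\bm{t}_{-s})])$ collapses to $C_s(\bm{t}_s)$, and it is this collapse that produces the telescoping below. Throughout I would use the identity $\sum_{s=1}^S\psiis{0}[C_s(\bm{t}_s)]=\psiis{0}[C(\bm{t})]$, which is just $\psi_0$ applied to $\bm{t}$ and which underlies the relation $\psiis{0}[C(\bm{1};\bm{t}_{-s})]=\psiis{0}[C(\bm{t})]-\psiis{0}[C_s(\bm{t}_s)]$ already recorded in Theorem~\ref{thm:right:rtrunc:NAC}.

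First I would dispose of every sector $\tilde{s}$ carrying none of the two distinguished coordinates. There all $u_{\tilde{s}j}=1$, so each inner term equals $\psiis{\tilde{s}}[C_{\tilde{s}}(\bm{t}_{\tilde{s}})]$ and the inner sum over $j$ reduces via $d_{\tilde{s}}\psiis{\tilde{s}}[C_{\tilde{s}}(\bm{t}_{\tilde{s}})]-(d_{\tilde{s}}-1)\psiis{\tilde{s}}[C_{\tilde{s}}(\bm{t}_{\tilde{s}})]=\psiis{\tilde{s}}[C_{\tilde{s}}(\bm{t}_{\tilde{s}})]$; applying $\psi_{\tilde{s}}$ and then $\psiis{0}$ shows that such a sector contributes exactly $\psiis{0}[C_{\tilde{s}}(\bm{t}_{\tilde{s}})]$ to the outer sum. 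This is the common part of both cases, and I would then split according to whether $s_1,s_2$ coincide.

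For $s_1\neq s_2$ each of the sectors $s_1,s_2$ carries a single free coordinate. In sector $s_k$ the $d_{s_k}-1$ unit coordinates each yield $\psiis{s_k}[C_{s_k}(\bm{t}_{s_k})]$, which cancels against the correction $-(d_{s_k}-1)\psiis{s_k}[C_{s_k}(\bm{t}_{s_k})]$; only the single term in $u_{s_kj_k}$ survives inside $\psi_{s_k}$, so that $\psiis{0}\circ\psi_{s_k}\circ\psiis{s_k}$ collapses and the sector contributes $\psiis{0}[C(\bm{t})u_{s_kj_k}]-\psiis{0}[C(\bm{t})]+\psiis{0}[C_{s_k}(\bm{t}_{s_k})]$. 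Summing all sector contributions, the pieces $\psiis{0}[C_s(\bm{t}_s)]$ add up to $\psiis{0}[C(\bm{t})]$, so the outer argument reduces to $\psiis{0}[C(\bm{t})u_{s_1j_1}]+\psiis{0}[C(\bm{t})u_{s_2j_2}]-\psiis{0}[C(\bm{t})]$. Dividing by $C(\bm{t})$ and using $\psi_0(\psiis{0}[C(\bm{t})])=C(\bm{t})$ together with the definitions $\tpsi_0(t)=\psi_0(t+h)/\psi_0(h)$ and $\tpsiis{0}[u]=\psiis{0}[C(\bm{t})u]-\psiis{0}[C(\bm{t})]$ with $h=\psiis{0}[C(\bm{t})]$, this rewrites as $\tpsi_0(\tpsiis{0}[u_{s_1j_1}]+\tpsiis{0}[u_{s_2j_2}])$, the claimed tilted Archimedean form.

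For $s_1=s_2=s$ the single sector $s$ carries both free coordinates, so only $d_s-2$ of its coordinates are units. These leave $(d_s-2)\psiis{s}[C_s(\bm{t}_s)]$, which against the correction $-(d_s-1)\psiis{s}[C_s(\bm{t}_s)]$ leaves a residual $-\psiis{s}[C_s(\bm{t}_s)]$ inside $\psi_s$ alongside the two surviving summands over $k=1,2$; the other sectors contribute $\psiis{0}[C_{\tilde{s}}(\bm{t}_{\tilde{s}})]$, summing to $\tilde{h}=\psiis{0}[C(\bm{t})]-\psiis{0}[C_s(\bm{t}_s)]=\psiis{0}[C(\bm{1};\bm{t}_{-s})]$, and dividing by $C(\bm{t})$ yields precisely the first-case formula. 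The main obstacle is purely bookkeeping: keeping track of how the correction $-(d_s-1)\psiis{s}[C_s(\bm{t}_s)]$ telescopes against the unit coordinates in each sector. This is also exactly where the two cases diverge, since in the same-sector case two coordinates remain inside $\psi_s$ so the inner $\psi_s$-nesting does \emph{not} collapse, which is what prevents the pairwise margin from being tilted Archimedean in general.
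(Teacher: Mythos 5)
Your proposal is correct and follows exactly the route the paper intends: the paper's ``proof'' of Corollary~\ref{cor:biv:margins:NAC} is the one-line remark that the result follows from~\eqref{eq:NAC:simplification} by ``a similar but more tedious calculation'' to the univariate-margin verification, and your argument carries out precisely that calculation (setting all but two coordinates to $1$, collapsing unit coordinates via~\eqref{eq:NAC:simplification}, telescoping the $-(d_s-1)\psiis{s}[C_s(\bm{t}_s)]$ corrections, and using $\sum_{s=1}^S\psiis{0}[C_s(\bm{t}_s)]=\psiis{0}[C(\bm{t})]$). Both cases, including the identification of the cross-sector margin as $\tpsi_0(\tpsiis{0}[u_{s_1j_1}]+\tpsiis{0}[u_{s_2j_2}])$ and the reason the same-sector case fails to collapse, check out.
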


\begin{example}[Independent Archimedean copulas, exchangeable nested Archimedean
  copulas, hierarchical logistic stable tail dependence
  function]\label{ex:NAC:special:cases}
  \begin{enumerate}
  \item If $C_0$ is the independence copula (for example if $\psi_0(t)=\exp(-t)$), then
    \begin{align*}
      \psi_0\bigl(\psiis{0}[C(\bm{t})u_{sj}] - \psiis{0}[C(\bm{1};\bm{t}_{-s})]\bigr) &= \psi_0\bigl(\psiis{0}[C(\bm{t})u_{sj}] - \psiis{0}[C(\bm{t})] + \psiis{0}[C_{s}(\bm{t}_{s})]]\bigr)\\
      &=C(\bm{t})u_{sj}\frac{1}{C(\bm{t})} C_s(\bm{t}_s) = C_s(\bm{t}_s)u_{sj},
    \end{align*}
    so that
    \begin{align*}
      C_{\bm{t}}(\bm{u})&=\frac{\psi_0\bigl(\,\sum_{s=1}^S\psiis{0}\bigl[\psi_s\bigl(\,\sum_{j=1}^{d_s}
      \psiis{s}\bigl[C_s(\bm{t}_s)u_{sj}\bigr] - (d_s-1)\psiis{s}[C_s(\bm{t}_s)]
                          \bigr)\bigr]\bigr)}{C(\bm{t})},\\
      &=\frac{\prod_{s=1}^S \psi_s\bigl(\,\sum_{j=1}^{d_s}
      \psiis{s}\bigl[C_s(\bm{t}_s)u_{sj}\bigr] - (d_s-1)\psiis{s}[C_s(\bm{t}_s)]
        \bigr)}{C(\bm{t})}\\
      &=\prod_{s=1}^S \frac{\psi_s\bigl(\,\sum_{j=1}^{d_s}
      \psiis{s}\bigl[C_s(\bm{t}_s)u_{sj}\bigr] - (d_s-1)\psiis{s}[C_s(\bm{t}_s)]
        \bigr)}{C_s(\bm{t}_s)} = \prod_{s=1}^S C_{s,\bm{t}_s}(\bm{u}_s),
    \end{align*}
    where $C_{s,\bm{t}_s}$ denotes $C_s$ truncated at $\bm{t}_s$;
    compare with Example~\ref{ex:indep:dep:blocks:max}~\ref{ex:indep:dep:blocks:max:indep:blocks}. %
  \item If $\psi_0=\psi_1=\dots=\psi_S=\psi$, we obtain from~\eqref{eq:right:trunc:NAC} by canceling out compositions $\psii\circ\psi$ that $C_{\bm{t}}(\bm{u})$ equals
    \begin{align*}
      &=\frac{\psi\bigl(\,\sum_{s=1}^S\bigl(\,\sum_{j=1}^{d_s}
      \bigl(\psii[C(\bm{t})u_{sj}] - \psii[C(\bm{t})]+\psii[C_{s}(\bm{t}_{s})]\bigr) - (d_s-1)\psii[C_s(\bm{t}_s)]
        \bigr)\bigr)}{C(\bm{t})}\\
      &=\frac{\psi\bigl(\,\sum_{s=1}^S\bigl(\,\sum_{j=1}^{d_s}
        \psii[C(\bm{t})u_{sj}] - d_s\psii[C(\bm{t})]+d_s\psii[C_{s}(\bm{t}_{s})] - (d_s-1)\psii[C_s(\bm{t}_s)]
        \bigr)\bigr)}{C(\bm{t})}\\
      &=\frac{\psi\bigl(\,\sum_{s=1}^S\bigl(\,\sum_{j=1}^{d_s}
        \psii[C(\bm{t})u_{sj}] - d_s\psii[C(\bm{t})] + \psii[C_{s}(\bm{t}_{s})]
        \bigr)\bigr)}{C(\bm{t})}\\
      &=\frac{\psi\bigl(\,\sum_{s=1}^S\sum_{j=1}^{d_s}
        \psii[C(\bm{t})u_{sj}] - d\psii[C(\bm{t})] + \sum_{s=1}^S\psii[C_{s}(\bm{t}_{s})]
         \bigr)}{C(\bm{t})}\\
      &=\frac{\psi\bigl(\,\sum_{s=1}^S\sum_{j=1}^{d_s}
        \psii[C(\bm{t})u_{sj}] - d\psii[C(\bm{t})] + \psii[C(\bm{t})]
         \bigr)}{C(\bm{t})}\\
      &=\frac{\psi\bigl(\,\sum_{s=1}^S\sum_{j=1}^{d_s}
        \psii[C(\bm{t})u_{sj}] - (d-1)\psii[C(\bm{t})]
         \bigr)}{C(\bm{t})}.
    \end{align*}
    This is a right-truncated copula as in Theorem~\ref{thm:rtrunc:AC}, that is a tilted Archimedean copula,
    which is intuitive since taking equal generators in~\eqref{eq:NAC} results in an Archimedean copula. %
  \item\label{ex:NAC:special:cases:hier:logistic:stdf} If $\psi_s(t)=\opsi_s(t)=\psi(t^{\alpha_s})$, $s=0,1,\dots,S$, are outer power Archimedean generators
    with parameters $\alpha_s\in(0,1]$, $s=0,1,\dots,S$, satisfying $\alpha_0\ge\max\{\alpha_1,\dots,\alpha_S\}$ (sufficient nesting condition),
    then, by~\eqref{eq:right:trunc:NAC}, $C_{\bm{t}}(\bm{u})$ equals
    \begin{align*}
      \frac{\psi\bigl(\bigl(\,\sum_{s=1}^S\bigl(\,\sum_{j=1}^{d_s}(
      \psii[C(\bm{t})u_{sj}]^{\frac{1}{\alpha_0}}\!-\!\psii[C(\bm{1}; \bm{t}_{-s})]^{\frac{1}{\alpha_0}}
      )^{\frac{\alpha_0}{\alpha_s}}\!-\!(d_s\!-\!1)\psii[C_s(\bm{t}_s)]^{\frac{1}{\alpha_s}}\bigr)^{\frac{\alpha_s}{\alpha_0}}\bigr)^{\alpha_0}\bigr)}{C(\bm{t})}.
    \end{align*}
    This is also %
    the form of a right-truncated Archimax copula with hierarchical stable tail dependence function
    \begin{align*}
      \ell(\bm{x})=\ell_{\alpha_0}(\ell_{\alpha_1}(\bm{x}_1),\dots,\ell_{\alpha_S}(\bm{x}_S))=\biggl(\,\sum_{s=1}^S\biggl(\,\sum_{j=1}^{d_s}x_{sj}^{\frac{1}{\alpha_s}}\biggr)^{\frac{\alpha_s}{\alpha_0}}\biggr)^{\alpha_0},\quad \bm{x}=(\bm{x}_1,\dots,\bm{x}_S)\in[0,\infty)^d,
    \end{align*}
    since the choice of generators $\psi_0,\psi_1,\dots,\psi_S$ implies that
    \begin{align*}
      C(\bm{u})=\psi\biggl(\biggl(\,\sum_{s=1}^S\biggl(\,\sum_{j=1}^{d_s} \psii[u_{sj}]^{\frac{1}{\alpha_s}}\biggr)^{\frac{\alpha_s}{\alpha_0}}\biggr)^{\alpha_0}\biggr),
    \end{align*}
    which is an Archimax copula with generator $\psi$ and hierarchical stable tail dependence function $\ell$.
  \end{enumerate}
\end{example}

\begin{example}[Right-truncated nested Clayton and Gumbel copulas]
  Figure~\ref{fig:rtrunc:NACs} shows 5000 samples from truncated nested Clayton
  (left column) and truncated nested Gumbel (right column) copulas.  The nested
  copulas are of the form $C(\bm{u})=C_0(u_{11},C_1(u_{21},u_{22}))$, where
  $C_0,C_1$ are from the respective Archimedean family with parameters
  $\theta_0,\theta_1$ chosen such that the corresponding Kendall's taus are
  $0.5, 0.75$, respectively. The truncation points are $\bm{t}=(1,1,1)$ (no
  truncation; top row), $\bm{t}=(0.2, 0.5, 0.5)$ and $\bm{t}=(0.9, 0.9, 0.9)$
  (middle row), and $\bm{t}=(0.2, 0.1, 0.9)$ and $\bm{t}=(0.5, 0.5, 0.5)$ (bottom
  row).  As is clear from Corollary~\ref{cor:biv:margins:NAC} and
  Example~\ref{ex:clayton}, for the right-truncated Clayton copulas, the bivariate
  margins of pairs with indices belonging to different sectors (so $(U_1,U_2)$
  and $(U_1,U_3)$) do not change when changing the truncation point. We can also see that the
  within-sector margin (so $(U_2,U_3)$) is not much affected by a change of the
  truncation point. For the right-truncated Gumbel copulas, already moderate
  right-truncation will lead to weaker dependence as right-truncation especially
  affects the upper-right tail, in line with Proposition~\ref{prop:td:rtrunc:AC}.
  \begin{figure}[htbp]
    \centering
    \hfill
    \includegraphics[width=0.4\textwidth]{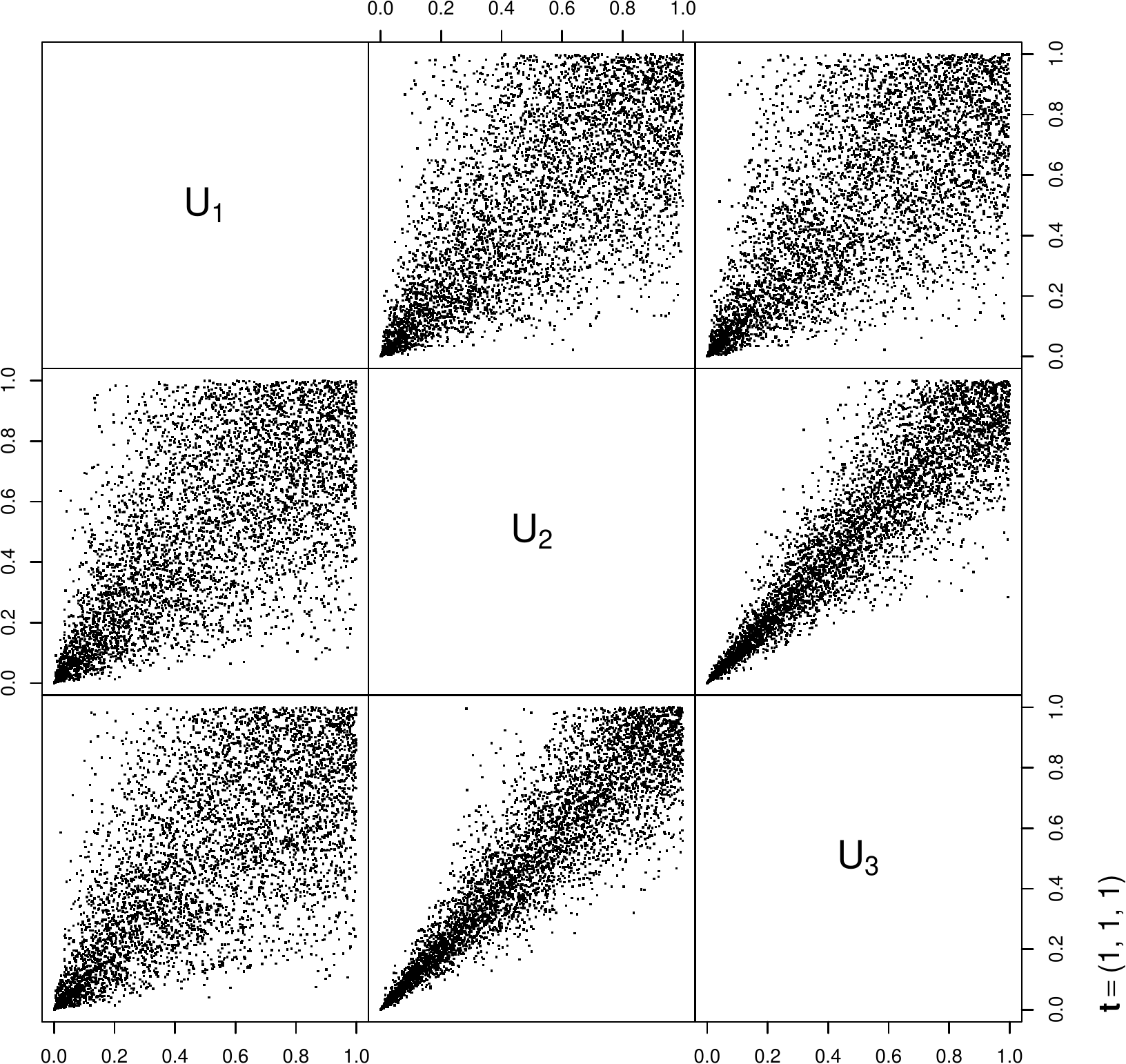}%
    \hfill
    \includegraphics[width=0.4\textwidth]{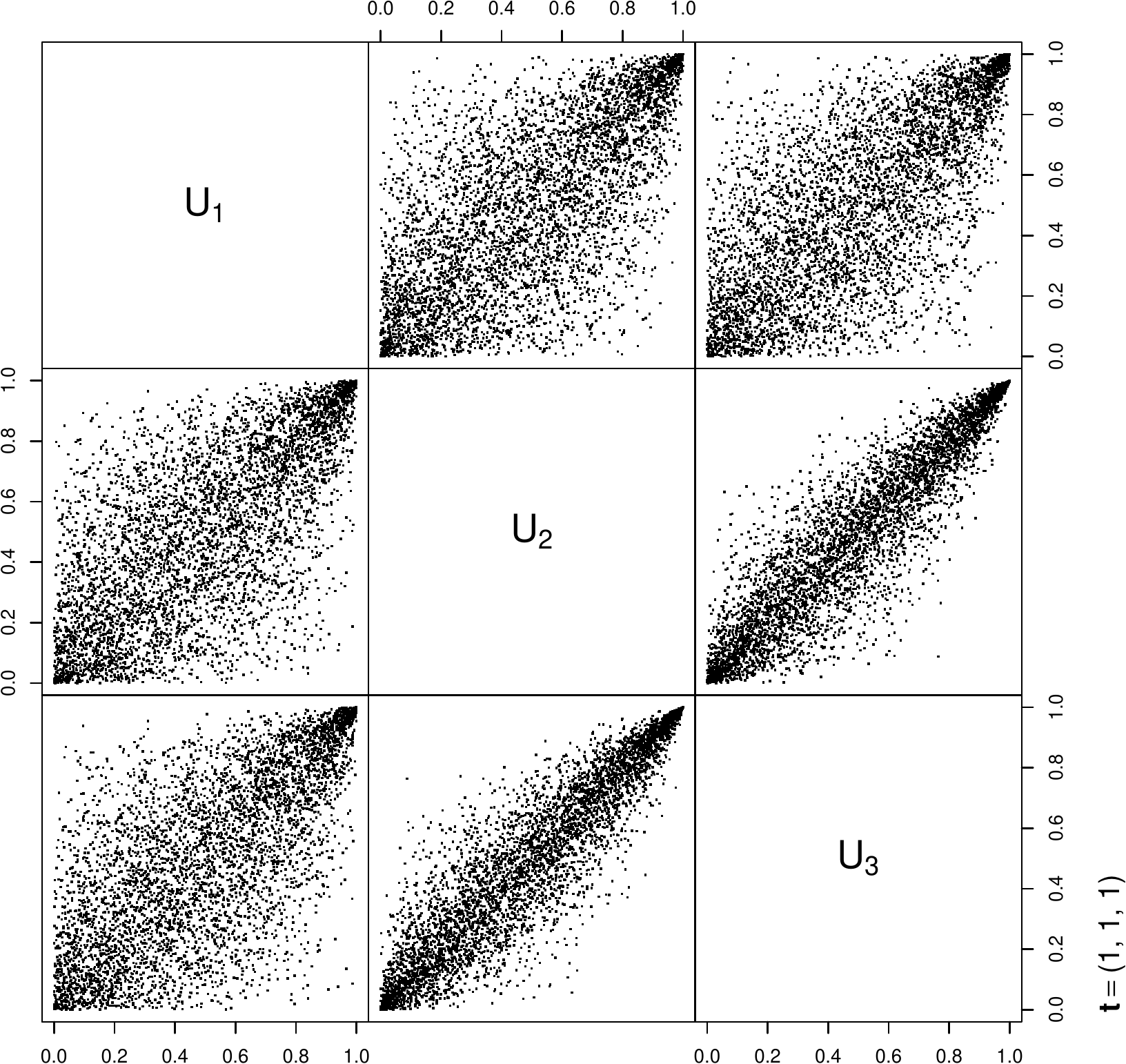}%
    \hspace*{\fill}\par\vspace{2mm}
    \hfill
    \includegraphics[width=0.4\textwidth]{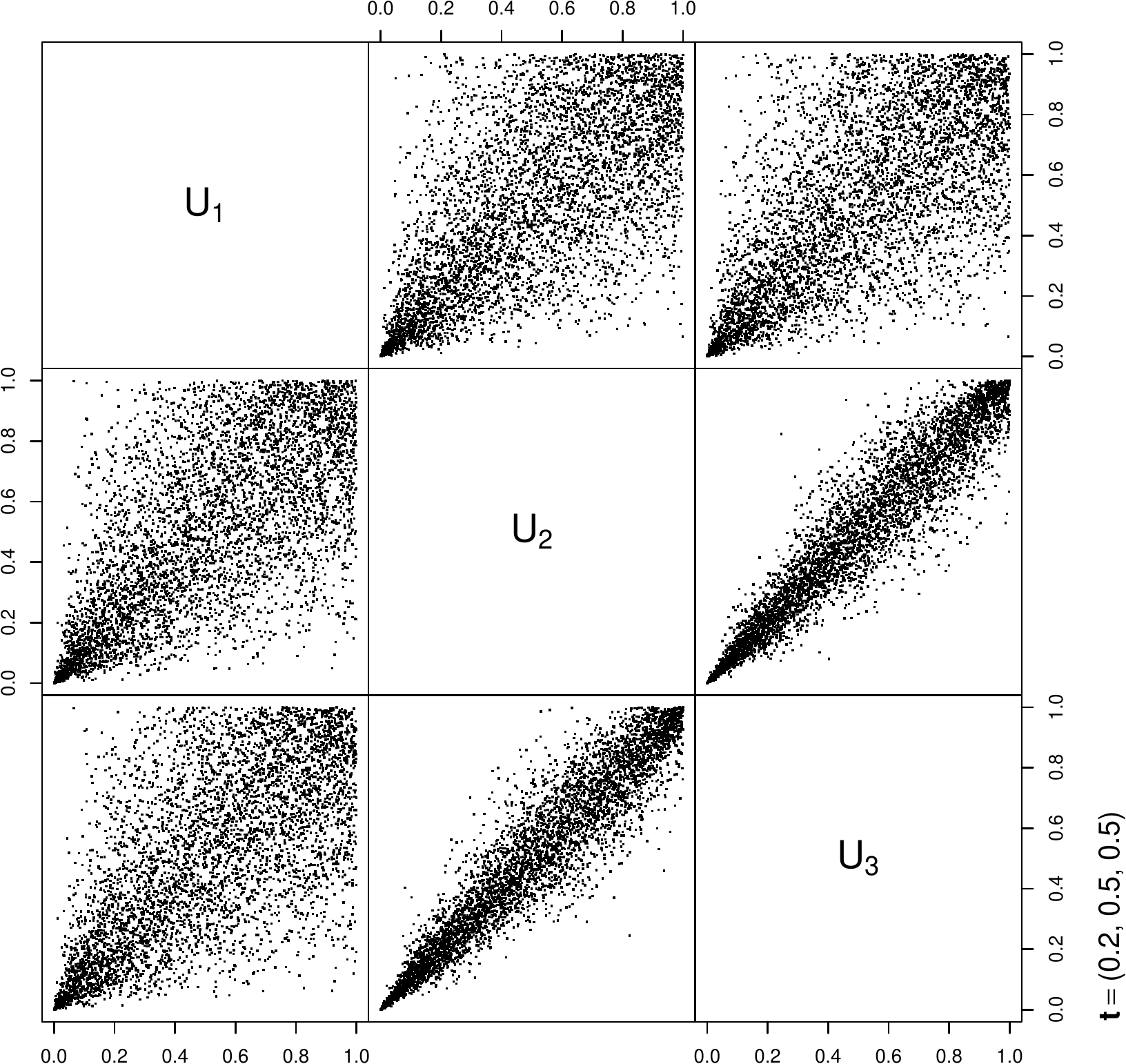}%
    \hfill
    \includegraphics[width=0.4\textwidth]{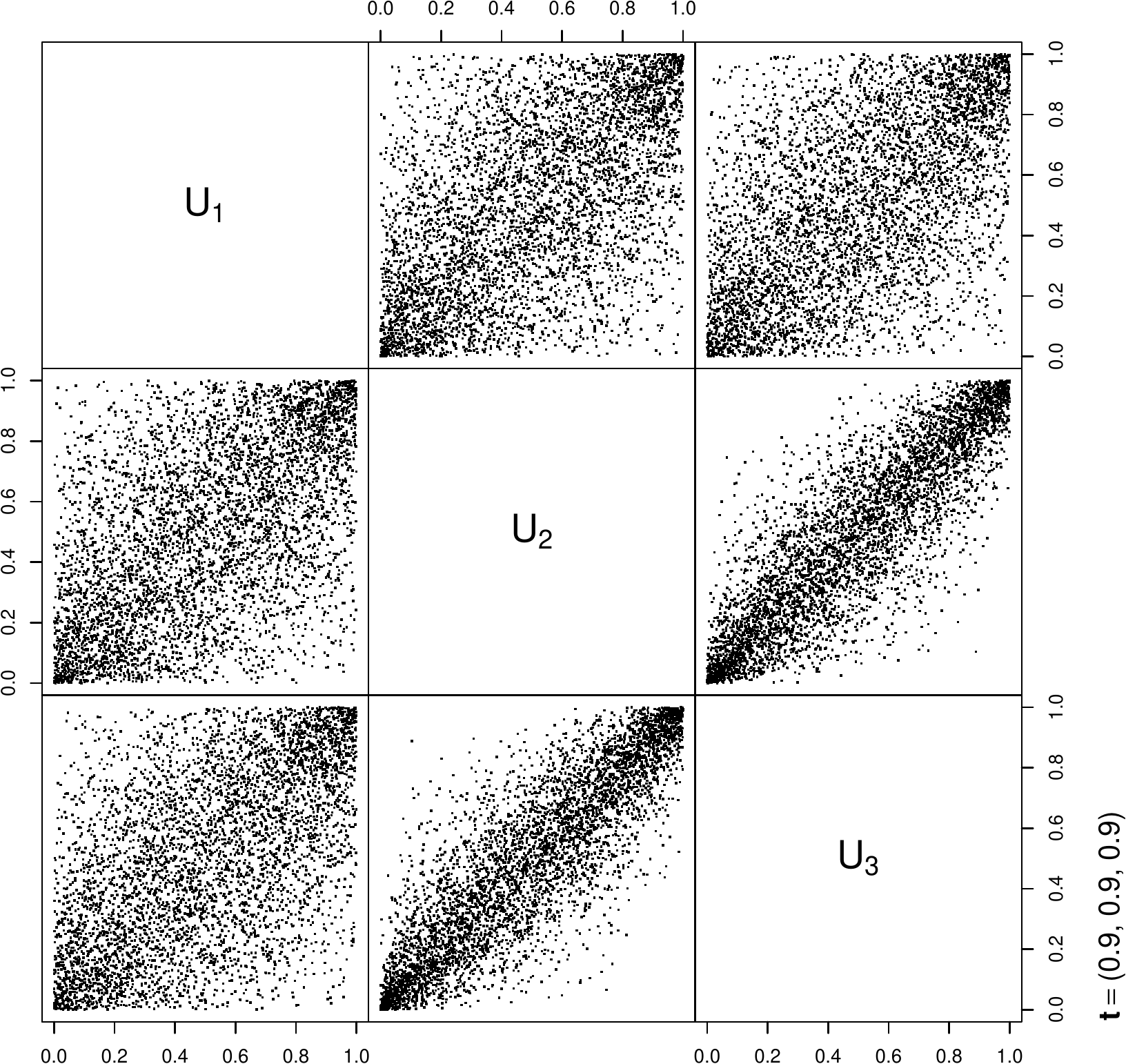}%
    \hspace*{\fill}\par\vspace{2mm}
    \hfill
    \includegraphics[width=0.4\textwidth]{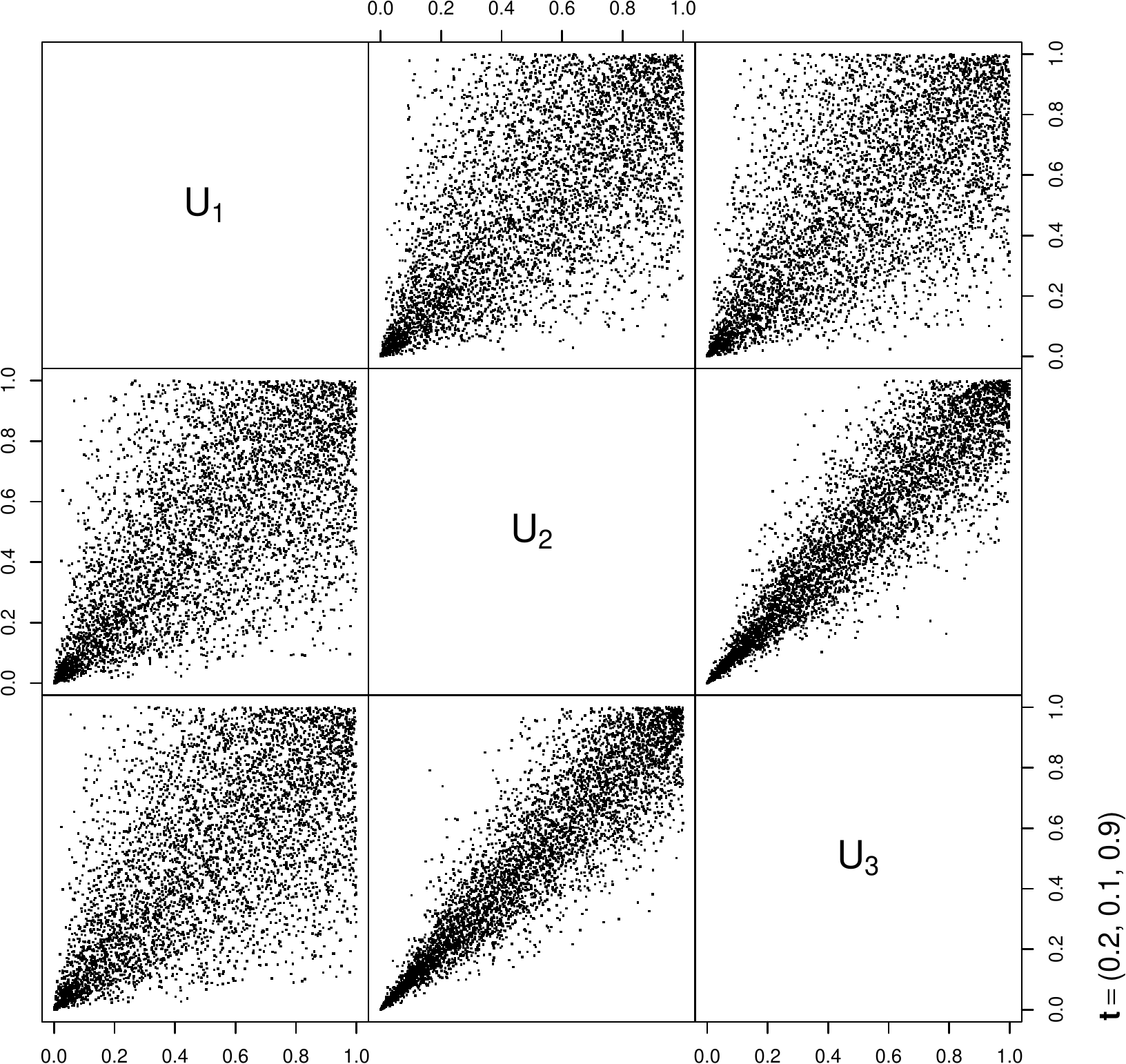}%
    \hfill
    \includegraphics[width=0.4\textwidth]{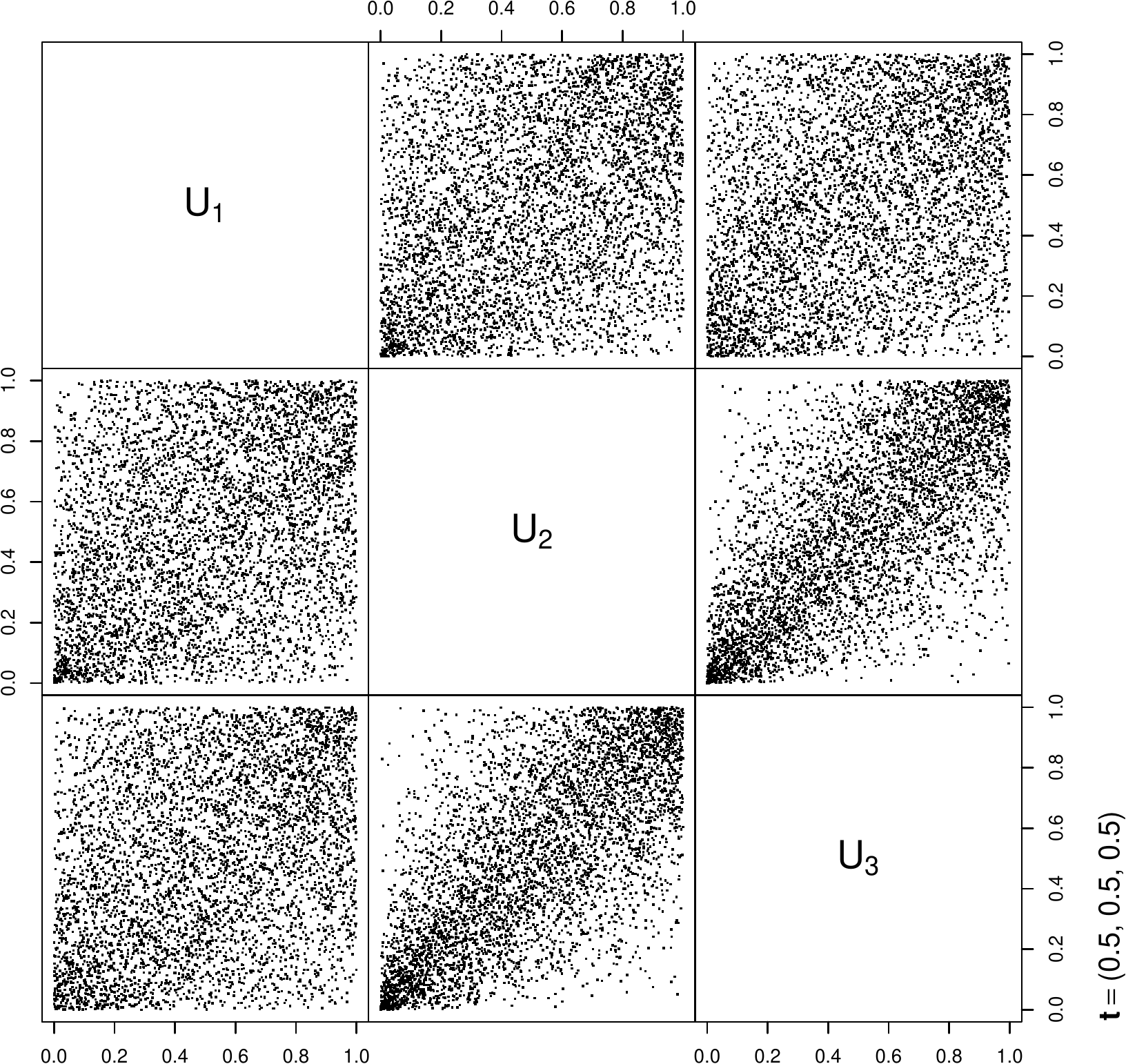}%
    \hspace*{\fill}
    \caption{$n=5000$ pseudo-observations from nested Clayton (left column)
      and nested Gumbel (right column) copulas (of the form $C(\bm{u})=C_0(u_{11},C_1(u_{21},u_{22}))$
      with parameters $\theta_0,\theta_1$ of $C_0,C_1$ chosen such that the
      corresponding Kendall's taus are $0.5, 0.75$, respectively)
      right-truncated at the indicated points $\bm{t}$.}
    \label{fig:rtrunc:NACs}
  \end{figure}
\end{example}

\section{Conclusion}
For $\bm{U}$ following a copula $C$, we considered the copulas $C_{\bm{t}}$ of
$\bm{U}\,|\,\bm{U}\le\bm{t}$, termed right-truncated copulas with truncation
point $\bm{t}=(t_1,\dots,t_d)$. In comparison to the existing literature, we
focused on the case of a fixed truncation point $\bm{t}\in(0,1]^d$, so neither
the case of equal truncation points $\bm{t}=(t,\dots,t)$ nor the limit for
$t\downarrow 0$; some results for the former case can be found in the
appendix. In this setup, we derived a formula for $C_{\bm{t}}$ which is
analytically tractable if $C$ is componentwise analytically invertible; see
Proposition~\ref{prop:rtrunc:C}.  We then considered the case where $\bm{U}$
follows an Archimedean copula and show that the family of copulas of
$\bm{U}\,|\,\bm{U}\le\bm{t}$ can be characterized as tilted Archimedean; see
Theorem~\ref{thm:rtrunc:AC}. For various well-known Archimedean copulas (where a
limiting Clayton model is not be adequate; see
Remark~\ref{rem:limit:clayton:not:adequate}) we were thus able to identify the
corresponding right-truncated copulas; in particular, right-truncated
Ali--Mikhail--Haq or Frank copulas, for example, are again Ali--Mikhail--Haq and
Frank copulas, respectively. We also considered outer power Archimedean copulas
$C$ (or Archimax copulas with logistic stable tail dependence function) and showed
that their right-truncated copulas $C_{\bm{t}}$ are tilted outer power
Archimedean; see Sections~\ref{sec:rtrunc:opAC} and
\ref{sec:rtrunc:AXC:log:stdf}. Furthermore, we derived the right-truncated
copulas of nested Archimedean copulas; see Theorem~\ref{thm:right:rtrunc:NAC}.

One open problem that remains concerns exchangeability of bivariate margins of
right-truncated copulas if not all truncation points are equal; see
Example~\ref{ex:surv:G}.

\appendix
\section{Properties of right-truncated copulas}\label{sec:prop}
In this section we gather selected properties of right-truncated copulas, mainly in the case of equal
truncation points, so $\bm{t}=(t,\dots,t)$ for some $t\in(0,1]$.

The following result provides a sufficient condition for $C_{\bm{t}}$ to be
exchangeable.
\begin{corollary}[Exchangeability]
  If $C$ is exchangeable, that is permutation symmetric in its arguments, and $\bm{t}=(t,\dots,t)$
  for some $t\in(0,1]$, then $F_{\bm{t}}$ and thus $C_{\bm{t}}$ are exchangeable.
\end{corollary}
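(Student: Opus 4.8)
The plan is to establish exchangeability first at the level of the distribution function $F_{\bm{t}}$ and then transfer it to the copula $C_{\bm{t}}$ by Sklar's Theorem. The structure is deliberately simple: exchangeability of $F_{\bm{t}}$ follows almost immediately from Lemma~\ref{lem:rtrunc:df} together with the symmetry of the truncation point, and the copula statement then follows from the fact that an exchangeable distribution has identical univariate margins.

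First I would invoke Lemma~\ref{lem:rtrunc:df}, which gives $F_{\bm{t}}(\bm{x})=C(\min\{\bm{x},\bm{t}\})/C(\bm{t})$. For an arbitrary permutation $\pi$ of $\{1,\dots,d\}$ I would evaluate $F_{\bm{t}}$ at the permuted argument $(x_{\pi(1)},\dots,x_{\pi(d)})$ and observe that, because $\bm{t}=(t,\dots,t)$ has equal components, the vector $(\min\{x_{\pi(1)},t\},\dots,\min\{x_{\pi(d)},t\})$ is exactly the permutation under $\pi$ of $(\min\{x_1,t\},\dots,\min\{x_d,t\})$. Since $C$ is exchangeable, I may reorder its arguments back, obtaining $F_{\bm{t}}(x_{\pi(1)},\dots,x_{\pi(d)})=F_{\bm{t}}(\bm{x})$; hence $F_{\bm{t}}$ is exchangeable. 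The equal-truncation hypothesis is precisely what makes this work: for a general $\bm{t}$ the map $\bm{x}\mapsto\min\{\bm{x},\bm{t}\}$ does not commute with permutations, and the argument breaks.

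For the copula, I would use that exchangeability of $F_{\bm{t}}$ forces all univariate margins to coincide (swapping coordinates $j$ and $k$ shows $F_{\bm{t},j}=F_{\bm{t},k}$), so $F_{\bm{t},1}=\dots=F_{\bm{t},d}=:G$. By Sklar's Theorem one then has $C_{\bm{t}}(\bm{u})=F_{\bm{t}}(G^{-1}(u_1),\dots,G^{-1}(u_d))$, and applying the permutation $\pi$ to the arguments together with the exchangeability of $F_{\bm{t}}$ already shown yields $C_{\bm{t}}(u_{\pi(1)},\dots,u_{\pi(d)})=C_{\bm{t}}(\bm{u})$, i.e.\ $C_{\bm{t}}$ is exchangeable. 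The only point that genuinely needs care is this last step: one must record that the margins $F_{\bm{t},j}$ are identical before substituting the quantile transforms, since otherwise the componentwise substitution $u_j\mapsto G^{-1}(u_j)$ would not be uniform across the coordinates and the permutation argument would not close.
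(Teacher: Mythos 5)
Your proof is correct and takes essentially the same route as the paper: both establish exchangeability of $F_{\bm{t}}$ from Lemma~\ref{lem:rtrunc:df}, the exchangeability of $C$, and the fact that the componentwise minimum with $\bm{t}=(t,\dots,t)$ commutes with permutations, and then transfer this property to $C_{\bm{t}}$. The only difference is cosmetic: where the paper cites Proposition~2.5.5 of Hofert, Kojadinovic, M\"achler and Yan (2018) for the step from exchangeable $F_{\bm{t}}$ to exchangeable $C_{\bm{t}}$, you prove that step inline (equal margins by exchangeability, then Sklar's Theorem with the common continuous margin $G$), which is a valid, self-contained substitute.
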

\begin{proof}
  If $\pi$ denotes a permutation of $\{1,\dots,d\}$ and
  $\bm{x}_{\pi}=(x_{\pi(1)},\dots,x_{\pi(d)})$, then Lemma~\ref{lem:rtrunc:df}
  implies that
  $F_{\bm{t}}(\bm{x}_{\pi})=\frac{C(\min\{x_{\pi(1)},t_1\},\dots,\min\{x_{\pi(d)},t_d\})}{C(t_1,\dots,t_d)}=\frac{C(\min\{x_1,t_{\pi^{-1}(1)}\},\dots,\min\{x_d,t_{\pi^{-1}(d)}\})}{C(t_{\pi^{-1}(1)},\dots,t_{\pi^{-1}(d)})}=F_{\bm{t}_{\pi^{-1}}}(\bm{x})$
  so that if $\bm{t}=(t,\dots,t)$ for some $t\in(0,1]$, $F_{\bm{t}}$ is exchangeable. By
  \cite[Proposition~2.5.5]{hofertkojadinovicmaechleryan2018}, %
  $C_{\bm{t}}$ is exchangeable.
\end{proof}
As we have already seen in Theorem~\ref{thm:rtrunc:AC} for Archimedean copulas (whose
right-truncated copulas are tilted Archimedean copulas and thus exchangeable), exchangeable
copulas $C$ can lead to exchangeable right-truncated copulas $C_{\bm{t}}$ even
for non-equal truncation points. The following example describes an open problem
concerning exchangeability.
\begin{example}[Right-truncated bivariate survival Gumbel copulas]\label{ex:surv:G}
  Figure~\ref{fig:rtrunc:SG:copula:samples} shows 5000 pseudo-observations
  from bivariate survival Gumbel copulas
  $C(u_1,u_2)=-1+u_1+u_2+\psi(\psii[1-u_1]+\psii[1-u_2])$ for
  $\psi(t)=\exp(-t^{1/\theta})$ with parameter $\theta=2$ (Kendall's tau equals
  $0.5$), right-truncated at the truncation points as indicated.
  \begin{figure}[htbp]
    \centering
    \includegraphics[width=0.48\textwidth]{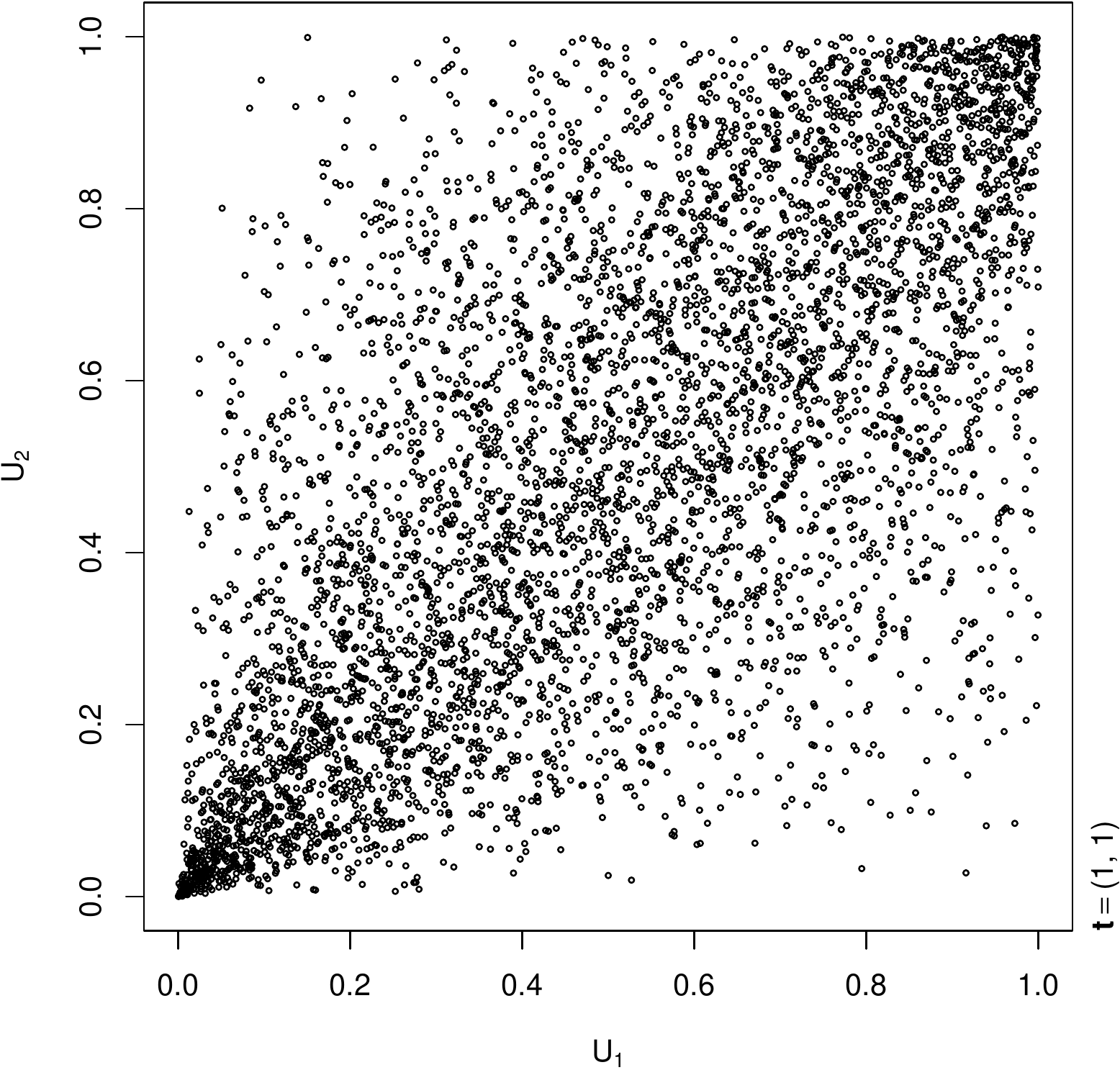}%
    \hfill
    \includegraphics[width=0.48\textwidth]{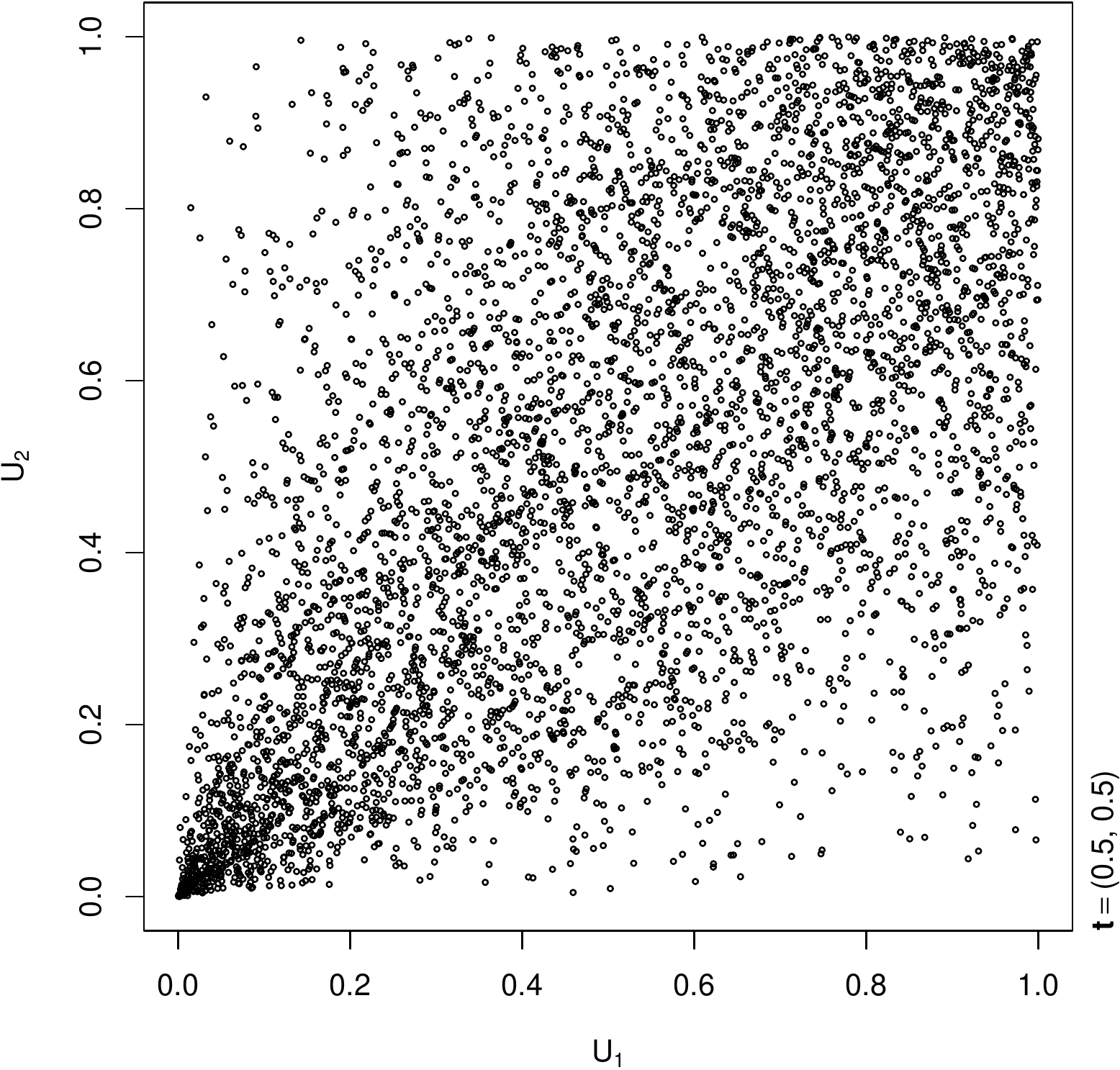}\\[2mm]
    \includegraphics[width=0.48\textwidth]{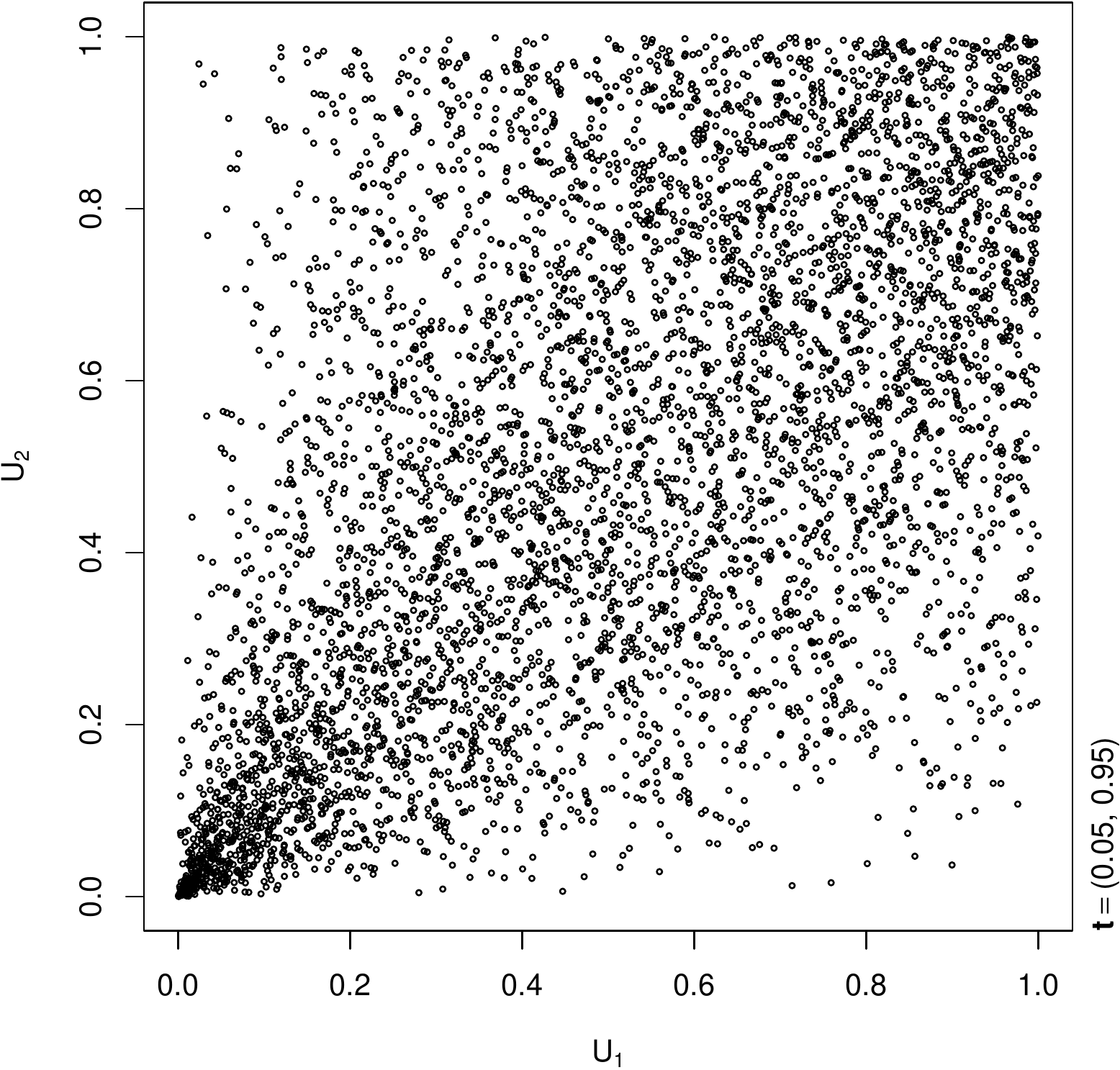}%
    \hfill
    \includegraphics[width=0.48\textwidth]{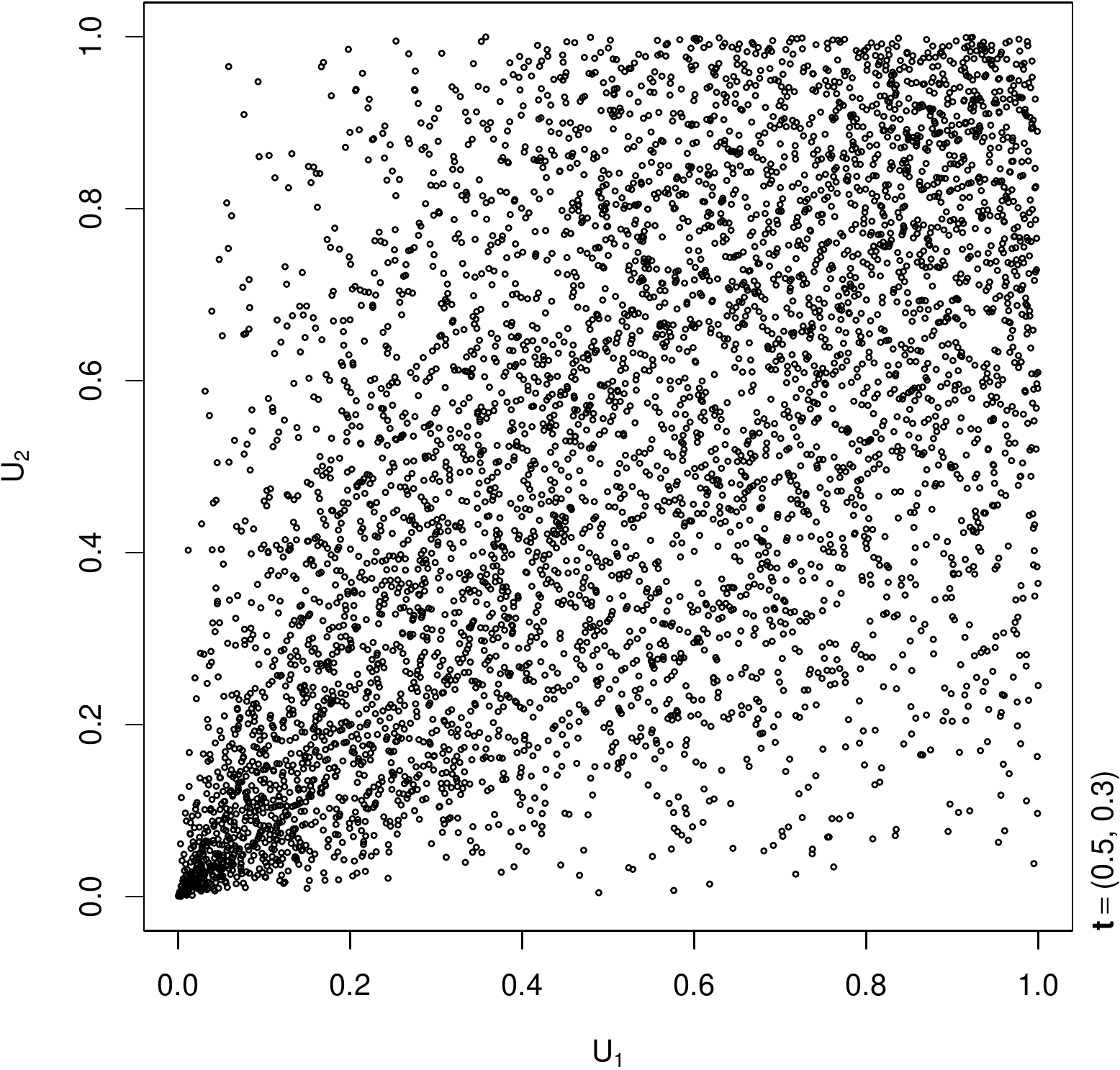}%
    \caption{$n=5000$ pseudo-observations from a survival Gumbel copula (with
      parameter such that Kendall's tau equals $0.5$) right-truncated at the indicated
      points $\bm{t}$.}
    \label{fig:rtrunc:SG:copula:samples}
  \end{figure}
  As we can see from the bottom row of
  Figure~\ref{fig:rtrunc:SG:copula:samples}, it seems that such bivariate copulas
  are exchangeable. It remains an open
  problem to show this property mathematically. In three or more dimensions, non-equal
  truncation points of survival Gumbel %
  copulas lead to non-exchangeable copulas, but each of their bivariate margins again seem to be
  exchangeable; see Figure~\ref{fig:rtrunc:SG:copula:samples:3d}.
  \begin{figure}[htbp]
    \centering
    \includegraphics[width=0.48\textwidth]{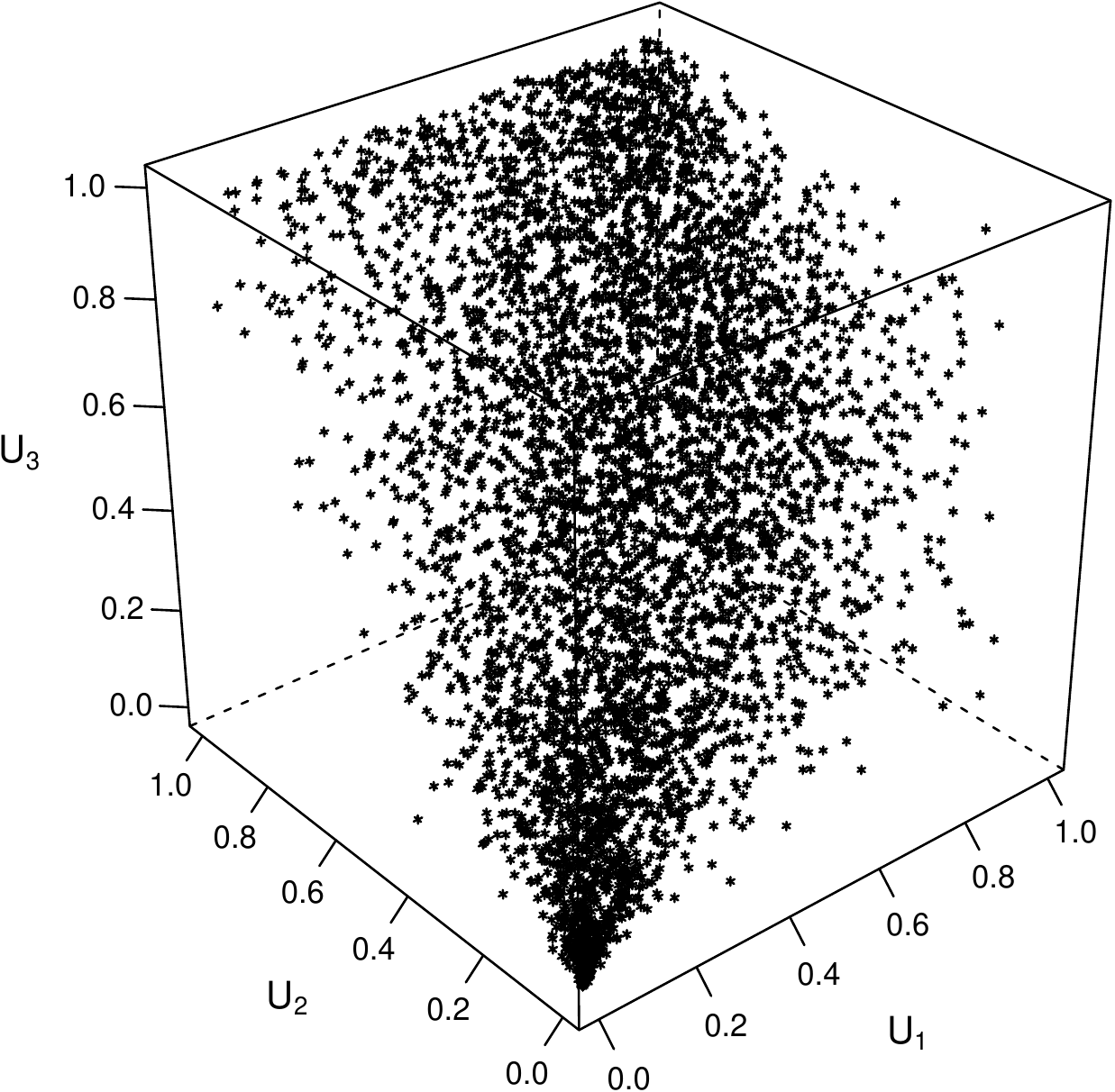}%
    \hfill
    \includegraphics[width=0.48\textwidth]{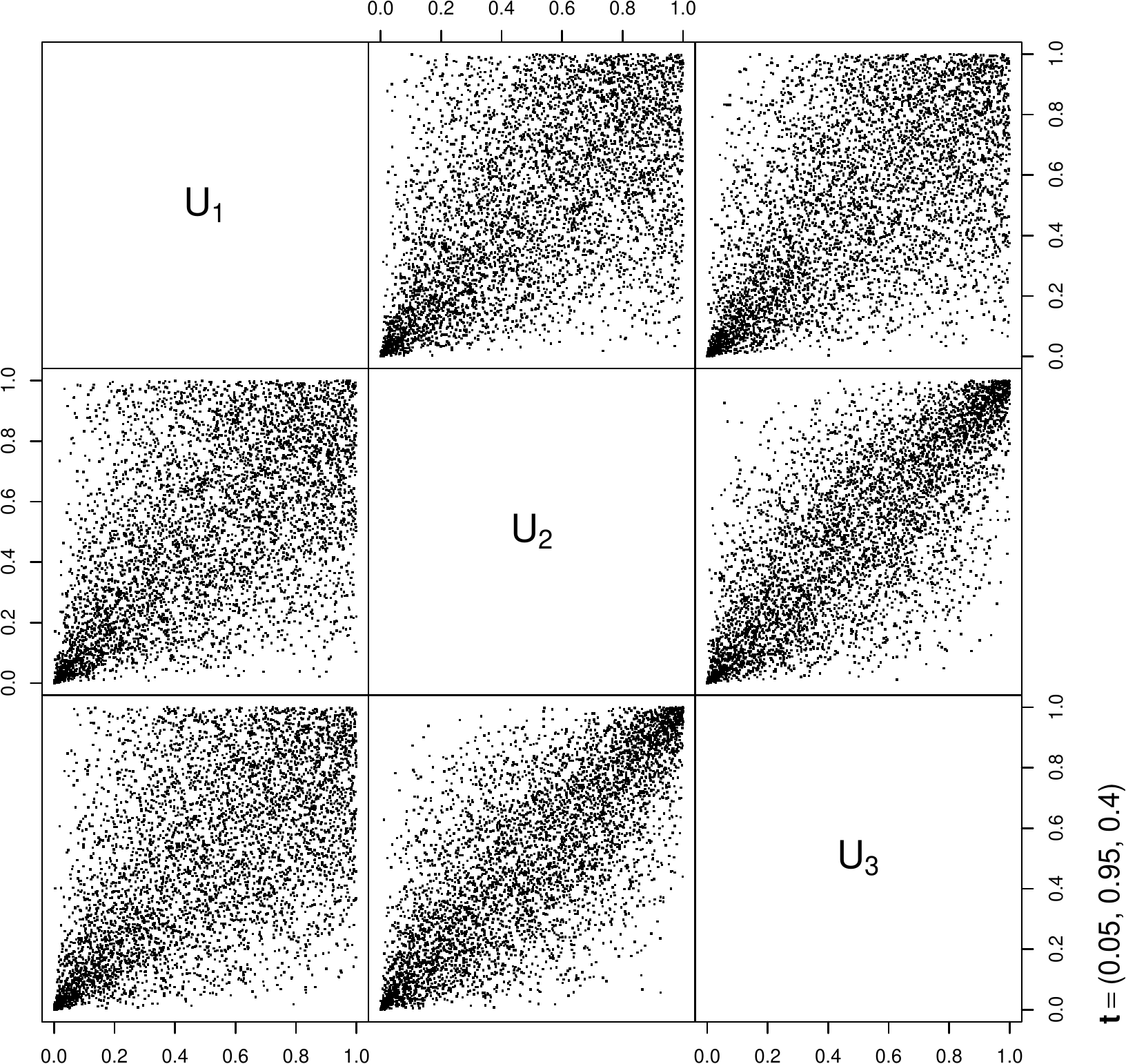}%
    \caption{$n=5000$ pseudo-observations from a trivariate survival Gumbel
      copula (with parameters such that all pairwise Kendall's tau equal $0.5$)
      right-truncated at $\bm{t}=(0.05, 0.95, 0.4)$.}
    \label{fig:rtrunc:SG:copula:samples:3d}
  \end{figure}
\end{example}

The following result addresses the coefficients of tail dependence of right-truncated exchangeable
copulas.
\begin{proposition}[Tail dependence of exchangeable copulas right-truncated at equal truncation points]\label{prop:tail:dep:ex:rtrunc:equal:t}
  Let $C$ be a bivariate exchangeable copula with existing coefficient of lower and upper
  tail dependence $\lambda_{\text{l}}^C$ and $\lambda_{\text{u}}^C$,
  respectively. Furthermore, let $\bm{t}=(t,t)$ for some $t\in(0,1]$ such that $C(\bm{t})>0$.  Assuming
  the limits to exist, the coefficients of lower and upper tail dependence of
  the truncated copula $C_{\bm{t}}$ at $\bm{t}$ are given by
  \begin{align*}
    \lambda_{\text{l}}^{C_{\bm{t}}}=\lim_{u\downarrow 0}\frac{C(u,u)}{C(u,t)} = \lambda_{\text{l}}^C \lim_{u\downarrow 0}\frac{u}{C(u,t)}%
    \quad\text{and}\quad
    \lambda_{\text{u}}^{C_{\bm{t}}}=2-\lim_{u\uparrow t}\frac{C(t,t)-C(u,u)}{C(t,t)-C(u,t)}.%
  \end{align*}
  In particular, $\lambda_{\text{l}}^{C_{\bm{t}}}\ge \lambda_{\text{l}}^{C}$. %

  Furthermore, assuming the limits to exist,
  \begin{align*}
    \lambda_{\text{l}}^{C_{\bm{t}}}=\frac{\lambda_{\text{l}}^{C}}{\D_1 C(0,t)}\quad\text{and}\quad \lambda_{\text{u}}^{C_{\bm{t}}}=2-\frac{\delta_C'(t)}{\D_1C(t,t)},
  \end{align*}
  where $\D_1C(s,t)=\frac{\partial}{\partial u}C(u,t)\big|_{u=s}$ for $s\in[0,t]$ %
  and $\delta_C'(t)=\frac{\partial}{\partial u}\delta_C(u)\big|_{u = t}$ for $\delta_C(u)=C(u,u)$.
\end{proposition}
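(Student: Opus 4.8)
The plan is to reduce both tail-dependence limits for $C_{\bm{t}}$ to limits involving only the original copula $C$, via the change of variables induced by the right-truncated margin. By Lemma~\ref{lem:rtrunc:df} together with exchangeability of $C$, both margins of $F_{\bm{t}}$ coincide with $F_{\bm{t},1}(x)=C(x,t)/C(t,t)$, $x\in[0,t]$, which is a continuous distribution function with $F_{\bm{t},1}(0)=0$ and $F_{\bm{t},1}(t)=1$. Writing $v=F_{\bm{t},1}^{-1}(u)=C^{-1}[C(t,t)u;t]$, equivalently $u=C(v,t)/C(t,t)$, Proposition~\ref{prop:rtrunc:C} gives $C_{\bm{t}}(u,u)=C(v,v)/C(t,t)$. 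The correspondence $u\leftrightarrow v$ maps $[0,1]$ continuously and monotonically onto $[0,t]$, with $u\downarrow 0\Leftrightarrow v\downarrow 0$ and $u\uparrow 1\Leftrightarrow v\uparrow t$, so every limit in $u$ can be rewritten as a limit in $v$. Exchangeability is used solely here, to make both margins equal so that a single substitution variable $v$ suffices for the diagonal $C_{\bm{t}}(u,u)$.

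For the lower tail I would first substitute to obtain
\[
\lambda_{\text{l}}^{C_{\bm{t}}}=\lim_{u\downarrow 0}\frac{C_{\bm{t}}(u,u)}{u}=\lim_{v\downarrow 0}\frac{C(v,v)/C(t,t)}{C(v,t)/C(t,t)}=\lim_{v\downarrow 0}\frac{C(v,v)}{C(v,t)}.
\]
Factoring as $\frac{C(v,v)}{v}\cdot\frac{v}{C(v,t)}$ and using that both factors converge yields $\lambda_{\text{l}}^{C_{\bm{t}}}=\lambda_{\text{l}}^C\lim_{v\downarrow 0}v/C(v,t)$. The inequality $\lambda_{\text{l}}^{C_{\bm{t}}}\ge\lambda_{\text{l}}^C$ then follows because $C(v,t)\le C(v,1)=v$ forces $v/C(v,t)\ge 1$. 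For the differentiable version, I recognize $\lim_{v\downarrow 0}v/C(v,t)$ as the reciprocal of the difference quotient of $x\mapsto C(x,t)$ at $0$ (recall $C(0,t)=0$), which equals $1/\D_1C(0,t)$.

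For the upper tail I would use the standard rearrangement $\frac{1-2u+C_{\bm{t}}(u,u)}{1-u}=2-\frac{1-C_{\bm{t}}(u,u)}{1-u}$, exactly as in the proof of Lemma~\ref{lem:td:rtrunc:AC}. Substituting and clearing the common factor $1/C(t,t)$ turns the remaining ratio into
\[
\frac{1-C_{\bm{t}}(u,u)}{1-u}=\frac{C(t,t)-C(v,v)}{C(t,t)-C(v,t)},
\]
so that $\lambda_{\text{u}}^{C_{\bm{t}}}=2-\lim_{v\uparrow t}\frac{C(t,t)-C(v,v)}{C(t,t)-C(v,t)}$. Dividing numerator and denominator by $t-v$ and passing to the limit identifies the numerator quotient with $\delta_C'(t)$ (where $\delta_C(u)=C(u,u)$) and the denominator quotient with $\D_1C(t,t)$, giving $\lambda_{\text{u}}^{C_{\bm{t}}}=2-\delta_C'(t)/\D_1C(t,t)$.

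The only delicate point is justifying the change of variables: one must verify that $F_{\bm{t},1}$ is a continuous, increasing bijection so that the endpoint limits $u\downarrow 0$ and $u\uparrow 1$ correspond exactly to $v\downarrow 0$ and $v\uparrow t$, and that the generalized inverse causes no trouble at the endpoints. Continuity of $C$ (copulas are Lipschitz) yields continuity of $F_{\bm{t},1}$, while the endpoint values $F_{\bm{t},1}(0)=0$ and $F_{\bm{t},1}(t)=1$ pin down the limits; possible flat stretches of $x\mapsto C(x,t)$ are harmless since we only require the correspondence to be continuous and onto near the two endpoints. Beyond this, the argument is a sequence of elementary substitutions and difference-quotient computations.
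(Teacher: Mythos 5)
Your proof is correct and follows essentially the same route as the paper's: the identical change of variables between $u$ and $v=C^{-1}[C(t,t)u;t]$ (equivalently $u=C(v,t)/C(t,t)$) induced by the truncated margin, the same factoring $\frac{C(v,v)}{v}\cdot\frac{v}{C(v,t)}$ and bound $C(v,t)\le v$ for the lower tail, and the same rearrangement $2-\frac{C(t,t)-C(v,v)}{C(t,t)-C(v,t)}$ for the upper tail. The only cosmetic difference is at the very end: the paper invokes l'H\^opital's rule for the derivative-based formulas, while you identify the limits directly as (one-sided) difference quotients at $0$ and $t$ --- an equally valid and arguably slightly more direct finish.
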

\begin{proof}
  Assuming the limits to exist, we have
  \begin{align*}
    \lambda_{\text{l}}^{C_{\bm{t}}} &= \lim_{u\downarrow 0}\frac{C_{\bm{t}}(u,u)}{u} = \lim_{u\downarrow 0}\frac{C(C^{-1}[C(\bm{t})u;t],C^{-1}[C(\bm{t})u;t])}{C(\bm{t})u}\\
                                &= \lim_{v\downarrow 0}\frac{C(C^{-1}[v;t],C^{-1}[v;t])}{v} = \lim_{u\downarrow 0}\frac{C(u,u)}{C(u;t)}=\lim_{u\downarrow 0}\frac{C(u,u)}{C(u,t)}. %
  \end{align*}
  If the coefficient of lower tail dependence
  $\lambda_{\text{l}}^C$ of $C$ and the limit $\lim_{u\downarrow 0} \frac{u}{C(u,t)}$
  exist, then
  \begin{align*}
    \lambda_{\text{l}}^{C_{\bm{t}}}=\lim_{u\downarrow 0}\frac{C(u,u)}{u} \lim_{u\downarrow 0}\frac{u}{C(u,t)}= \lambda_{\text{l}}^C \lim_{u\downarrow 0}\frac{u}{C(u,t)}.
  \end{align*}
  To see that $\lambda_{\text{l}}^{C_{\bm{t}}}\ge \lambda_{\text{l}}^{C}$, note that $C(u,t)\le\min\{u,t\}=u$ for $u\le t$.

  Now consider upper tail dependence. Similar as before, we obtain for $C(\bm{t})>0$ that
  \begin{align*}
    \lambda_{\text{u}}^{C_{\bm{t}}}&=\lim_{u\uparrow 1}\frac{1-2u+C_{\bm{t}}(u,u)}{1-u}=2-\lim_{u\uparrow 1}\frac{1-C_{\bm{t}}(u,u)}{1-u}\\
                                   &=2-\lim_{u\uparrow 1}\frac{1-C(C^{-1}[C(\bm{t})u;t],C^{-1}[C(\bm{t})u;t])/C(\bm{t})}{1-u}\\
                                   &= 2-\lim_{v\uparrow C(\bm{t})}\frac{1-C(C^{-1}[v;t],C^{-1}[v;t])/C(\bm{t})}{1-v/C(\bm{t})}\\
                                   &= 2-\lim_{v\uparrow C(\bm{t})}\frac{C(\bm{t})-C(C^{-1}[v;t],C^{-1}[v;t])}{C(\bm{t})-v} = 2-\lim_{u\uparrow t}\frac{C(t,t)-C(u,u)}{C(t,t)-C(u,t)}.
  \end{align*}

  The final formulas for $\lambda_{\text{l}}^{C_{\bm{t}}}$ and $\lambda_{\text{u}}^{C_{\bm{t}}}$ follow from an
  application of l'H\^opital's rule.
\end{proof}

\begin{corollary}[Tail dependence for truncated survival Archimedean copulas]\label{cor:tail:dep:trunc:surv:AC}%
  Let $C$ be the survival copula of an Archimedean copula $C_\psi$ with
  generator $\psi$ satisfying $\psi'(0)=-\infty$; this is the case for all
  Archimedean copulas with completely monotone generators with upper tail dependence;
  see \cite{embrechtshofert2011c}. If $\bm{t}=(t,t)$ for $t\in(0,1]$, then
  $\lambda_{\text{l}}^{C_{\bm{t}}}=\lambda_{\text{u}}^{C_\psi}$ and
  $\lambda_{\text{u}}^{C_{\bm{t}}}=0$.
\end{corollary}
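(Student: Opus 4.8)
The plan is to apply Proposition~\ref{prop:tail:dep:ex:rtrunc:equal:t} to $C$, the survival copula $\hat{C}$ of the Archimedean copula $C_\psi(u_1,u_2)=\psi(\psii[u_1]+\psii[u_2])$, which has the explicit form $C(u_1,u_2)=u_1+u_2-1+\psi(\psii[1-u_1]+\psii[1-u_2])$. Since $C_\psi$ is exchangeable and passing to the survival copula preserves exchangeability, $C$ is exchangeable and the proposition applies with $\bm{t}=(t,t)$. I would use its l'H\^opital forms $\lambda_{\text{l}}^{C_{\bm{t}}}=\lambda_{\text{l}}^C/\D_1C(0,t)$ and $\lambda_{\text{u}}^{C_{\bm{t}}}=2-\delta_C'(t)/\D_1C(t,t)$, combined with the standard survival-copula identities $\lambda_{\text{l}}^C=\lambda_{\text{u}}^{C_\psi}$ and $\lambda_{\text{u}}^C=\lambda_{\text{l}}^{C_\psi}$ (which follow from $C(u,u)=2u-1+C_\psi(1-u,1-u)$ and the substitution $v=1-u$). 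The whole argument then reduces to differentiating $C$.

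For the lower tail, using $\frac{\rd}{\rd u}\psii[1-u]=-1/\psi'(\psii[1-u])$ and the chain rule, I would obtain $\D_1C(u,t)=1-\psi'(\psii[1-u]+\psii[1-t])/\psi'(\psii[1-u])$. Evaluating at $u=0$ and using $\psii[1]=0$ gives $\D_1C(0,t)=1-\psi'(\psii[1-t])/\psi'(0)$. This is exactly where the hypothesis $\psi'(0)=-\infty$ is decisive: for $t\in(0,1)$ the numerator $\psi'(\psii[1-t])$ is finite and negative, so the quotient vanishes and $\D_1C(0,t)=1$. Hence $\lambda_{\text{l}}^{C_{\bm{t}}}=\lambda_{\text{l}}^C=\lambda_{\text{u}}^{C_\psi}$.

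For the upper tail, differentiating $\delta_C(u)=C(u,u)=2u-1+\psi(2\psii[1-u])$ yields $\delta_C'(t)=2-2\psi'(2\psii[1-t])/\psi'(\psii[1-t])$, while the formula above gives $\D_1C(t,t)=1-\psi'(2\psii[1-t])/\psi'(\psii[1-t])$. These obey the purely algebraic identity $\delta_C'(t)=2\,\D_1C(t,t)$, so $\lambda_{\text{u}}^{C_{\bm{t}}}=2-\delta_C'(t)/\D_1C(t,t)=2-2=0$, using that $\D_1C(t,t)\neq0$ (it is the conditional distribution function $\P(U_2\le t\mid U_1=t)\in(0,1)$ for $t\in(0,1)$).

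The main obstacle is the rigorous justification of the boundary derivative $\D_1C(0,t)=1$, since this is precisely where $\psi'(0)=-\infty$ enters and is the one place the analysis is not a routine chain-rule computation; it is worth recording that $\psi'(0)=-\infty$ is exactly the condition characterizing upper tail dependence of completely monotone generators, as cited in the statement. The remaining care is standard: verifying differentiability of $\psi$ on the relevant open intervals, confirming existence of the limits in Proposition~\ref{prop:tail:dep:ex:rtrunc:equal:t} so that l'H\^opital's rule is valid, and handling the degenerate boundary $t=1$, where no truncation occurs, directly.
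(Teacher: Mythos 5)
Your proposal is correct and follows essentially the same route as the paper's proof: apply Proposition~\ref{prop:tail:dep:ex:rtrunc:equal:t} with the explicit survival-copula form, compute $\D_1 C(u,t)=1-\psi'(\psii[1-u]+\psii[1-t])/\psi'(\psii[1-u])$, use $\psi'(0)=-\infty$ to get $\D_1 C(0,t)=1$, and observe the identity $\delta_C'(t)=2\,\D_1 C(t,t)$ to conclude $\lambda_{\text{u}}^{C_{\bm{t}}}=0$. The additional care you flag (exchangeability of the survival copula, $\D_1C(t,t)\neq 0$, the case $t=1$) is left implicit in the paper but does not change the argument.
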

\begin{proof}
  We have $C(u,t)=-1+u+t+\psi(\psii[1-u]+\psii[1-t])$,
  so that
  \begin{align*}
    \D_1 C(u,t)=1-\frac{\psi'(\psii[1-u]+\psii[1-t])}{\psi'(\psii[1-u])}
  \end{align*}
  and thus $\D_1 C(0,t)=1-\psi'(0+\psii[1-t])/\psi'(0)=1$. By
  Proposition~\ref{prop:tail:dep:ex:rtrunc:equal:t},
  $\lambda_{\text{l}}^{C_{\bm{t}}}=\lambda_{\text{l}}^{C}=\lambda_{\text{u}}^{C_\psi}$.
  Furthermore, $\D_1 C(t,t)=1-\psi'(2\psii[1-t])/\psi'(\psii[1-t])$ and
  $\delta_{C}'(t)=2-2\psi'(2\psii[1-t])/\psi'(\psii[1-t])=2\D_1 C(t,t)$
  so that, by Proposition~\ref{prop:tail:dep:ex:rtrunc:equal:t},
  $\lambda_{\text{u}}^{C_{\bm{t}}}=2-2=0$.
\end{proof}
For example, for the survival Gumbel copula $C$ of Example~\ref{ex:surv:G}, we
obtain from Corollary~\ref{cor:tail:dep:trunc:surv:AC} that the corresponding
right-truncated copula $C_{\bm{t}}$ with equal truncation points has coefficients of lower
and upper tail dependence given by
$\lambda_{\text{l}}^{C_{\bm{t}}}=\lambda_{\text{u}}^{C_\psi}=2-2^{1/\theta}=2-\sqrt{2}\approx
0.5858$ and $\lambda_{\text{u}}^{C_{\bm{t}}}=0$, respectively; compare with the
top row of Figure~\ref{fig:rtrunc:SG:copula:samples}.

\printbibliography[heading=bibintoc]
\end{document}

%
%
%
%
